\newtheorem{theorem}{Theorem}[section]
\theoremstyle{plain}
\newtheorem{lemma}[theorem]{Lemma}
\newtheorem{proposition}[theorem]{Proposition}
\theoremstyle{remark}
\numberwithin{equation}{section}
\newcommand{\tr}{\operatorname{tr}}
\newcommand{\otr}{\operatorname{0-tr}}
\newcommand{\ovol}{\operatorname{0-vol}}
\newcommand{\FP}{\operatornamewithlimits{FP}}
\newcommand{\re}{\operatorname{Re}}
\newcommand{\im}{\operatorname{Im}}
\newcommand{\res}{\operatorname{Res}}
\newcommand{\rank}{\operatorname{rank}}
\newcommand{\supp}{\operatorname{supp}}
\newcommand{\detkv}{\operatorname{det_{\rm KV}}}
\newcommand{\tsp}{^{\rm t}}
\newcommand{\norm}[1]{\Vert #1 \Vert}
\newcommand{\bnorm}[1]{\left\Vert #1 \right\Vert}
\newcommand{\brak}[1]{\langle #1 \rangle}
\newcommand{\bbR}{\mathbb{R}}
\newcommand{\bbH}{\mathbb{H}}
\newcommand{\bbC}{\mathbb{C}}
\newcommand{\bbZ}{\mathbb{Z}}
\newcommand{\bbN}{\mathbb{N}}
\newcommand{\calR}{\mathcal{R}}
\newcommand{\calF}{\mathcal{F}}
\newcommand{\cinf}{C^\infty}
\newcommand{\del}{\partial}
\newcommand{\barX}{{\bar X}}
\newcommand{\bX}{{\partial\barX}}
\newcommand{\FPe}{\FP_{\varepsilon\to 0}}
\newcommand{\Ds}{(\Delta_g-s(n-s))}
\newcommand{\vep}{\varepsilon}
\newcommand{\tS}{\tilde{S}}
\newcommand{\tA}{\tilde{A}}
\newcommand{\hD}{\hat\Delta}
\newcommand{\hR}{\hat R}
\newcommand{\Drel}{D_{\rm rel}}
\newcommand{\Rsc}{\calR^{\rm sc}}
\newcommand{\Nsc}{N^{\rm sc}}
\newcommand{\nh}{\tfrac{n}{2}}
\begin{document}

\title[Upper and lower bounds]{Upper and lower bounds on resonances for manifolds hyperbolic near infinity}
\author[Borthwick]{David Borthwick}
\address{Department of Mathematics and Computer Science, Emory
University, Atlanta, Georgia, 30322, U. S. A.}
\email{davidb@mathcs.emory.edu}
\date{\today}
\subjclass[2000]{Primary 58J50,35P25; Secondary 47A40}

\begin{abstract}
For a conformally compact manifold that is hyperbolic near infinity and of dimension $n+1$, we 
complete the proof of the optimal $O(r^{n+1})$ upper bound on the resonance counting
function, correcting a mistake in the existing literature.  In the case of 
a compactly supported perturbation of a hyperbolic manifold,
we establish a Poisson formula expressing the regularized wave trace as a
sum over scattering resonances.  This leads to an $r^{n+1}$ lower bound on the
counting function for scattering poles.
\end{abstract}

\maketitle
\tableofcontents

\section{Introduction}\label{intro.sec}
If $(\barX, \bar{g})$ is a compact manifold with boundary and $\rho$ a boundary-defining function
for $\bX$, then the complete Riemannian manifold $X$ with metric $g := \rho^{-2} \bar{g}$ is 
called \emph{conformally compact}.  This definition is modeled on hyperbolic manifolds; 
for a discrete torsion-free group $\Gamma$ of isometries of $\bbH^{n+1}$, the 
quotient $\bbH^{n+1}/\Gamma$ is conformally compact precisely when 
$\Gamma$ is \emph{convex cocompact} (i.e.~the convex core of $\bbH^{n+1}/\Gamma$ is compact).  
In this paper we will be 
concerned with conformally compact manifolds $(X, g)$ which are \emph{hyperbolic
near infinity}, which means that $g$ has constant sectional curvature $-1$ outside of a 
compact set.  For any conformally compact manifold, the choice of boundary defining function 
$\rho$ induces a metric  $h = {\bar g}|_{\bX}$ on $\bX$, 
whose conformal class is defined independently of $\rho$.

For $(X, g)$ conformally compact and hyperbolic near infinity, we let $\dim X = n+1$
and denote by $\Delta_g$ the positive Laplacian associated to $g$.   The 
resolvent $R_g(s) := \Ds^{-1}$ has a meromorphic continuation to $s\in\bbC$
with poles of finite rank \cite{MM:1987, GZ:1995b}.   For background on the spectral theory 
of asymptotically hyperbolic manifolds, we refer the reader to Perry's survey article
\cite{Perry:2007}.

The \textit{resonances} of $(X,g)$ are the poles of $R_g(s)$ with multiplicities  
given by
$$
m_g(\zeta) := \rank \res_\zeta R_g(s).
$$
Resonances are closely related to the poles of the scattering matrix $S_g(s)$, which is
defined as in \cite{JS:2000, GZ:2003}.  For $\re s = \nh$, $s\ne \nh$, a function $f_1\in \cinf(\bX)$
determines a unique solution of $\Ds u = 0$ such that
$$
u \sim \rho^{n-s} f_1 + \rho^s f_2
$$
as $\rho \to 0$, with $f_2\in \cinf(\bX)$
This defines the map $S_g(s): f_1 \mapsto f_2$, which extends meromorphically to $s\in \bbC$ as
a family of pseudodifferential operators of order $2s-n$.
To define the multiplicity of scattering poles, we use a renormalized scattering matrix of order zero
given by
\begin{equation}\label{tS.def}
\tS_g(s) := \frac{\Gamma(s-\nh)}{\Gamma(\nh - s)}
\Lambda^{n/2-s} S_g(s) \Lambda^{n/2-s}.
\end{equation}
where
$$
\Lambda := \frac12 (\Delta_h+1)^{1/2}.
$$
This renormalization makes $\tS_g(s)$ into a meromorphic family of Fredholm operators with 
poles of finite rank.  The multiplicity at a pole or zero of $S_g(s)$ (with poles counted 
positively to match the resonances) is then defined by
$$
\nu_g(\zeta) := - \tr \bigl[\res_\zeta \tS_g'(s) \tS_g(s)^{-1}\bigr].
$$

The scattering multiplicities are related to the resonance multiplicities by
\begin{equation}\label{nu.mumu}
\nu_g(\zeta) = m_g(\zeta) - m_g(n-\zeta) + \sum_{k\in \bbN} \Bigl( \mathbbm{1}_{n/2 - k}(s) 
-\mathbbm{1}_{n/2+k}(s) \Bigr) d_k,
\end{equation}
where
$$
d_k := \dim \ker \tS_{g}(\tfrac{n}2+k).
$$
This result was partially established by Guillop\'e-Zworksi \cite{GZ:1997} (for $n=1$) and
Borthwick-Perry \cite{BP:2002} (for $\zeta \notin \frac{n}2 \pm \bbN$), 
and completed by Guillarmou \cite{Gui:2005} (with a restriction that was later removed 
in \cite{GN:2006}).  Guillarmou's computation of the correction term at half-integer points
was based on work of Graham-Zworski \cite{GZ:2003}, 
who identified $\tS_{g}(\tfrac{n}2+k)$ with a multiple of the $k$-th conformal 
Laplacian on $(\bX, h)$.  

We will distinguish two resonance sets, the set $\calR_g$ of resonances listed according to
multiplicities, and the \emph{scattering resonance} set
$$
\Rsc_g := \calR_g \cup \bigcup_{n=1}^\infty 
\Bigr\{\tfrac{n}2 - k \text{ with multiplicity } d_k\Bigr\}.
$$
The latter is not quite the same as the set of \textit{scattering poles}, which is
usually defined as the set where $\nu_g(\zeta) >0$. 
Note, however, that the multiplicities of points in $\Rsc_g$ differ from the scattering pole 
multiplicity $\nu_g(\zeta)$ only when $\zeta(n-\zeta) \in \sigma_{\rm d}(\Delta_g)$, 
i.e.~only at finitely many points.
We also introduce the respective counting functions,
\begin{equation}\label{NNsc}
N(r) :=  \#\{\zeta\in \calR_g:\> |\zeta| \le r\},\qquad
\Nsc(r) :=  \#\{\zeta\in \Rsc_g:\> |\zeta| \le r\},
\end{equation}
and note that $N(r) \le \Nsc(r)$.  

The difference between $N(r)$ and $\Nsc(r)$ can be significant.  For example, in 
$\bbH^{n+1}$ we can write the scattering matrix explicitly in terms of the Laplacian on
$S^n$, using \cite[Lemma~A.2]{GZ:1995a}, 
\begin{equation}\label{S0.formula}
\frac{\Gamma(s-\nh)}{\Gamma(\nh - s)}\>S_0(s) =  2^{n-2s} 
\frac{\Gamma(\nh - s)\> \Gamma\!\left(\sqrt{\Delta_{S^n} + (\frac{n-1}2)^2} + \tfrac12 + s - \nh\right)}{\Gamma(s-\nh)\>
\Gamma\!\left(\sqrt{\Delta_{S^n} + (\frac{n-1}2)^2} + \tfrac12 - s + \nh\right)}.
\end{equation}
From this we can quickly deduce that $\Rsc_0 = -\bbN_0$ with the multiplicity at $-k$ given by
\begin{equation}\label{nu0.def}
h_n(k) := (2k+n) \frac{(k+1)\dots (k+n-1)}{n!},
\end{equation}
which is the dimension of the space of spherical harmonics
of degree $k$ in dimension $n+1$.   Hence $\Nsc(r) \sim c_n k^{n+1}$.
If $n$ is odd, then the resonance set is $-\bbN_0$ with 
multiplicities given by $h_n(k)$, and
the two counting functions in (\ref{NNsc}) are the same.  However, for $n$ 
even $\bbH^{n+1}$ has no resonances, 
and in this case $\Nsc(s)$ is counting only the contributions from the $d_k$.

\begin{theorem}\label{upbound.thm}
For $(X,g)$ conformally compact and hyperbolic near infinity, 
\begin{equation}\label{Nsc.bound}
\Nsc(r) = O(r^{n+1}).
\end{equation}
\end{theorem}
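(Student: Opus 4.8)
The plan is to bound the scattering resonance counting function $\Nsc(r)$ by reducing to a counting problem for zeros of a suitable holomorphic function, via a standard strategy: construct a relative scattering determinant (or a regularized resolvent determinant) whose zeros are exactly the points of $\Rsc_g$ counted with the correct multiplicities, obtain an $O(r^{n+1})$ growth bound on this determinant, and then invoke a Jensen-type argument to convert the growth bound into a counting bound. The key objects are the renormalized scattering matrix $\tS_g(s)$ from \eqref{tS.def}, whose poles and zeros (counted via $\nu_g$) are tied to $\calR_g$ by \eqref{nu.mumu}, and a model comparison with the hyperbolic scattering matrix $\tS_0(s)$ described by \eqref{S0.formula}.

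First, I would set up the relative scattering operator. Since $(X,g)$ is hyperbolic near infinity, one can compare $S_g(s)$ with the model $S_0(s)$ on $\bbH^{n+1}$ (or more precisely on a finite-volume hyperbolic or convex cocompact model matching the structure at infinity), and show that the relative operator $\tS_g(s)\tS_0(s)^{-1}$ differs from the identity by a trace-class operator that depends holomorphically on $s$ away from a discrete set, with trace-norm growing at most polynomially, in fact like $O(\langle s\rangle^{n+1})$, on suitable regions. This estimate is the analytic heart of the argument; it rests on elliptic and pseudodifferential parametrix constructions for $\Ds^{-1}$ localized near the compact perturbation region, together with the explicit asymptotics for the model piece read off from \eqref{S0.formula}. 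The main obstacle — and the place where the paper's predecessors apparently erred — is getting the exponent in this trace-norm (or determinant) bound exactly right: one must be careful that the $\Gamma$-factor prefactors in \eqref{tS.def} and the conformal-Laplacian contributions at the half-integers $\tfrac{n}{2}\pm k$ (the $d_k$ terms) are accounted for, so that the determinant's zero set is precisely $\Rsc_g$ and not some larger or smaller set, and so that no spurious growth is introduced by the renormalization.

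Second, with the relative determinant $D_{\rm rel}(s) := \detkv\bigl(\tS_g(s)\tS_0(s)^{-1}\bigr)$ (or the analogous regularized determinant) in hand, I would establish the entire-ness of an appropriate product $D_{\rm rel}(s)\cdot(\text{explicit }\Gamma\text{-factor})$ and a bound of the form $|D_{\rm rel}(s)| \le \exp\bigl(C\langle s\rangle^{n+1}\bigr)$, uniformly for $s$ in the half-plane $\re s \le \tfrac{n}{2}$ (using functional equations to handle the other half-plane), away from small neighborhoods of the poles. The zeros of $D_{\rm rel}$ in $\re s < \tfrac{n}{2}$, together with the contribution of the model zeros from $\tS_0$ absorbed separately via the explicit formula \eqref{S0.formula} (which gives exactly the $h_n(k)$ count, hence $O(r^{n+1})$), account for all of $\Rsc_g$ up to the finitely many points where $\zeta(n-\zeta)\in\sigma_{\rm d}(\Delta_g)$.

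Finally, I would apply the standard Jensen/Carleman counting lemma: a holomorphic function on a disk of radius $2r$ with $|F(0)|$ bounded below and $\log|F| \le C r^{n+1}$ on that disk has at most $O(r^{n+1})$ zeros in the disk of radius $r$. Summing the contributions from $D_{\rm rel}$ and from the model, and noting that the finitely many discrepancy points do not affect the asymptotics, yields $\Nsc(r) = O(r^{n+1})$, which is \eqref{Nsc.bound}. I expect the bulk of the technical work, and the correction of the literature's mistake, to lie in the trace-norm estimate on the relative scattering operator and in verifying that the determinant's divisor is exactly $\Rsc_g$; the Jensen step is routine.
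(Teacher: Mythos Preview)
Your overall strategy --- build a regularized scattering determinant whose divisor encodes $\Rsc_g$, prove an $e^{C\langle s\rangle^{n+1}}$ growth bound, then apply Carleman/Jensen --- matches the paper's. But there is a genuine gap in your comparison operator, and you miss the geometry of where the difficulty actually lies.

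The product $\tS_g(s)\tS_0(s)^{-1}$ does not make sense as written: $\tS_0(s)$ acts on $L^2(S^n)$ while $\tS_g(s)$ acts on $L^2(\bX,dh)$, and for a general manifold hyperbolic near infinity there is no global hyperbolic model $(X_0,g_0)$ with $\bX_0 \cong \bX$. (Such a model is assumed only for Theorems~\ref{poisson.thm} and~\ref{lowerbound.thm}, not for Theorem~\ref{upbound.thm}.) The paper's substitute is the locally patched operator
\[
A(s) = \sum_j \varphi_1^j \gamma_j^s\, \iota_j^*(S_0(s))\, \gamma_j^s \varphi^j
\]
built from model scattering matrices on the hyperbolic charts covering a neighborhood of $\bX$; this acts on $L^2(\bX,dh)$ and makes $\vartheta_g(s):=\det S_g(n-s)A(s)$ a genuine Fredholm determinant. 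The divisor of $\vartheta_g$ picks up $-\nu_g(\zeta)$ plus poles of $\tA(s)$ at $-\bbN_0$ with multiplicities $O(k^n)$, which one kills by a canonical product $g_n(s)^p$.

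Second, you propose a uniform bound on all of $\re s \le \tfrac{n}{2}$, but the paper does not attempt this. Cuevas--Vodev already give the $O(r^{n+1})$ count in a sector excluding the negative real axis; the new content is Proposition~\ref{left.upbound}, covering only the sector $\arg\zeta\in[\tfrac{\pi}{2}+\vep,\tfrac{3\pi}{2}-\vep]$. There the bound on $\vartheta_g(s)$ for $\re s\le 0$ uses the identity
\[
S_g(n-s)A(s) = I + \bigl(G_N(n-s)\tsp + B_N(n-s)\tsp R_g(n-s)\bigr) B_N(s),
\]
together with the elementary resolvent bound $\|R_g(n-s)\|=O(1)$ for $\re s\le 0$ and parametrix-based singular value estimates $\mu_k(\rho^{-N}B_N(s))\le e^{CN-ck^{1/n}}$; Weyl's inequality then yields the $e^{CN^{n+1}}$ bound with $N\sim |s|$. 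Your claim of a direct ``trace-norm $O(\langle s\rangle^{n+1})$'' skips exactly this mechanism, which is where the correct exponent $n+1$ originates and where the earlier literature went wrong.
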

In this context, Guillop\'e-Zworski \cite{GZ:1995b} proved the upper bound $N(r)  = O(r^{n+2})$,
along with the optimal $O(r^{2})$ bound for surfaces \cite{GZ:1995a}. 
Froese-Hislop \cite{FH:2000} sketched arguments for an $O(r^{n+1})$ bound in
the half-plane $\re s < 0$, under the assumption that the ends are asymptotic to
product metrics.  Cuevas-Vodev \cite{CV:2003} proved the $O(r^{n+1})$ bound in
a sector excluding the negative real axis, in the same context as Theorem~\ref{upbound.thm}.   
However, they did not establish the global bound (\ref{Nsc.bound}), as claimed.  The proof 
of \cite[Prop.~1.3]{CV:2003}, which covers the half-plane $\re s < 0$,
is flawed.\footnote{In \S2.2 they claim incorrectly that $\rho \circ \iota_j^{-1} = y + O(y^2)$.  The estimates 
in the appendix are consequently based on an oversimplified formula for the scattering matrix.}
One of the main contributions of this paper will be to prove the optimal estimate in the
half-plane $\re s < 0$ (see Proposition~\ref{left.upbound}), thus completing the 
proof of Theorem~\ref{upbound.thm}.

\bigbreak
Another primary result of this paper is a Poisson formula expressing the wave trace as 
a sum over the scattering resonance set.
This Poisson formula is stated in terms of the $0$-trace, a regularization introduced by
Guillop\'e-Zworski \cite{GZ:1997} for the surface case and inspired by the
b-integral of Melrose \cite{Melrose:1993}.   A conformally compact manifold is \emph{asymptotically
hyperbolic} if we can choose a boundary-defining function $\rho$ satisfying $|d\rho|_{\bar g} = 1$
on $\bX$.  In this case (which includes our setting) one can always choose a special defining function
such that $|d\rho|_{\bar g} = 1$ holds in some neighborhood of $\bX$ (see \cite{GL:1991}).  We will 
assume henceforth that $\rho$ satisfies this extra condition.

Suppose an operator $A$ has continuous kernel
$A(z,z')$, with respect to $dg$, and $A(z,z)$ admits a polyhomogeneous expansion
in $\rho$ as $\rho \to 0$.  Then we may define
$$
\otr A := \FPe \int_{\rho \ge \vep} A(z,z)\>dg(z),
$$
where FP denotes the finite part in the sense of Hadamard.
The $0$-volume of $(X,g)$ is similarly defined by 
$$
\ovol(X,g) := \FPe \text{vol}_g\{\rho \ge \vep\}.
$$
Assuming that $\rho$ is a special defining function as described above, 
this quantity is independent of the choice of $\rho$ in even dimensions, 
but not in odd dimensions.

Because the $0$-trace is purely formal, it is difficult to estimate directly.  Thus, in 
order to prove the Poisson formula we must introduce background
operators to reduce to actual traces.  In the two-dimensional context of \cite{GZ:1997},
the hyperbolic funnel and cusp boundary models give natural background operators.  
In higher dimensions, the only suitable candidate for the background is 
an exactly hyperbolic manifold with the same ends.  
Let $(X,g)$ be a conformally compact manifold.  For the results below we 
will assume that there exists a conformally compact hyperbolic manifold $(X_0,g_0)$
(possibly disconnected) such that $(X-K,g) \cong (X_0 - K_0,g_0)$ for some compact sets
$K \subset X$ and $K_0 \subset X_0$.  (Note: this restriction does not apply to 
Theorem~\ref{upbound.thm}.)

\begin{theorem}[Poisson formula]\label{poisson.thm}
Let $(X,g)$ be compactly supported perturbation of a conformally
compact hyperbolic manifold, in the sense described above.  
Then, in a distributional sense on $\bbR - \{0\}$, 
$$
\otr \left[ \cos \left(t \sqrt{\smash[b]{\Delta_g - n^2/4}}\,\right) \right] 
= \frac12 \sum_{\zeta\in \Rsc_g} e^{(\zeta-n/2)|t|} - A(X) \frac{\cosh t/2}{(2\sinh |t|/2)^{n+1}},
$$
where
$$
A(X) := \begin{cases}0 & n \text{ odd }(\dim X\>\text{is even}),  \\
\chi(X) & n \text{ even }(\dim X\>\text{is odd}).  \end{cases}
$$
\end{theorem}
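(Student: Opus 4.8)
The plan is to derive the Poisson formula by comparing the wave group of $(X,g)$ with that of the exactly hyperbolic background $(X_0,g_0)$, thereby trading the purely formal $0$-trace for an honest trace-class difference plus known contributions from the background. First I would introduce the operators $U(t) := \cos(t\sqrt{\Delta_g - n^2/4})$ and $U_0(t)$ defined the same way on $X_0$, and argue that $U(t) - U_0(t)$ (suitably interpreted via identifications over the common ends $X-K \cong X_0-K_0$, using finite propagation speed to localize) is trace-class for $t\ne 0$, with $\otr[U(t)] = \otr[U_0(t)] + \tr[U(t)-U_0(t)]$. The point of this reduction is that each piece is now tractable: the background term $\otr[U_0(t)]$ is computed explicitly from the known structure of hyperbolic manifolds (this is where the $A(X)$ term and the $\cosh(t/2)/(2\sinh|t|/2)^{n+1}$ singular term come from, via the Selberg-type trace formula / the explicit form of the hyperbolic wave kernel and the contribution of the identity element), and the honest trace $\tr[U(t)-U_0(t)]$ is handled by the Birman-Krein machinery, i.e.\ expressed through the relative scattering phase $\sigma(s) = \tfrac{1}{2\pi i}\log\det \tilde S_g(s)\tilde S_0(s)^{-1}$, whose poles are exactly the scattering resonances counted by $\Rsc_g$ (and those of the background, which cancel against $U_0$).

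The heart of the argument is then a contour-integral / stationary-phase computation. I would write the relevant resolvent or heat/wave expansions in terms of $R_g(s)$ and use the factorization of the relative scattering determinant as a ratio of entire functions of order $n+1$ whose zeros are the resonances — this is the Hadamard product input, and its existence in the hyperbolic-near-infinity setting is exactly what Theorem~\ref{upbound.thm} guarantees (the $O(r^{n+1})$ bound ensures convergence of the product and controls the growth needed to justify shifting contours). Concretely, one represents $\tr[U(t)-U_0(t)]$ as an integral of $e^{(s-n/2)|t|}$ against $\tfrac{d}{ds}\log$ of that determinant over a vertical line $\re s = \nh + c$, then pushes the contour to the left, picking up the residues at $\zeta\in\Rsc_g$ (giving the sum $\tfrac12\sum e^{(\zeta-n/2)|t|}$) while the shifted integral and the arcs at infinity vanish for $t\ne 0$ by the order-$n+1$ growth estimate. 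Symmetry of the determinant under $s\mapsto n-s$ and the relation \eqref{nu.mumu} are used to account correctly for the half-integer correction terms $d_k$ and to see that the apparent poles at $n-\zeta$ do not contribute, so that exactly $\Rsc_g$ (not $\calR_g$) appears.

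I expect the main obstacle to be the rigorous justification of the $0$-trace/honest-trace splitting and of the contour shift simultaneously: one must show that the finite-part regularization in $\otr[U(t)]$ matches, term by term in the polyhomogeneous expansion in $\rho$, the regularization implicit in the hyperbolic background $\otr[U_0(t)]$, so that the difference is genuinely trace-class and the formal manipulations are legitimate; and one must control the wave kernel near the diagonal at infinity uniformly enough to interchange the $\FP_{\vep\to0}$ with the spectral integral. A secondary technical point is handling the singular support at $t=0$ — the formula is only asserted on $\bbR-\{0\}$ — and making sure the singular term $-A(X)\cosh(t/2)/(2\sinh|t|/2)^{n+1}$, which is smooth away from $t=0$, correctly absorbs the diagonal contribution of the hyperbolic heat/wave kernel including the Euler-characteristic factor in odd $\dim X$ via Gauss-Bonnet-type identities. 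Once the trace-class reduction and the growth bound from Theorem~\ref{upbound.thm} are in hand, the residue computation itself is a routine, if careful, exercise in complex analysis, and the $r^{n+1}$ lower bound on $\Nsc(r)$ then follows in standard fashion from the Poisson formula by examining the singularity of the right-hand side as $t\to 0^+$.
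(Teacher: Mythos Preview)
Your high-level strategy---compare with the hyperbolic background $(X_0,g_0)$, pass to a relative scattering determinant, invoke its Hadamard factorization over $\Rsc_g$, and read off the wave trace---is exactly the paper's strategy. But the implementation you sketch diverges from the paper's in two places, and in each your version has a real gap.

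First, the paper never asserts that $U(t)-U_0(t)$ is trace class; when $X$ and $X_0$ are not diffeomorphic (only their ends agree) these operators live on different Hilbert spaces, and cutoffs plus finite propagation speed will not produce an honest trace-class operator whose trace equals the difference of $0$-traces. Instead the paper stays on the resolvent side throughout: it introduces $\Upsilon_g(s):=(2s-n)\,\otr[R_g(s)-R_g(n-s)]$, proves the identity $-\partial_s\log\det S_g(s)S_{g_0}(s)^{-1}=\Upsilon_g(s)-\Upsilon_{g_0}(s)$ via a Maass--Selberg integration by parts (Lemma~\ref{dsr.ups.lemma}), and combines this with Patterson--Perry's explicit formula for $\Upsilon_{g_0}$ in terms of the Selberg zeta function. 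The continuous wave $0$-trace is then recovered as $\tfrac{1}{4\pi}\calF[\Upsilon_g(\tfrac{n}{2}+i\xi)](t)$ plus a point-mass correction at $\xi=0$ (Lemma~\ref{upsing.lemma}), so no direct subtraction of wave groups is ever needed. Birman--Krein does appear in the paper, but only in the non-topological case $X=X_0$, and only to sharpen the degree bound on the polynomial factor in the Hadamard factorization.

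Second, your contour-shift step fails as stated: the logarithmic derivative of an order-$(n+1)$ entire function has polynomial growth on vertical lines, so the integral $\int e^{(s-n/2)|t|}\,\partial_s\log(\cdots)\,ds$ is not absolutely convergent and the arcs at infinity do not vanish. The paper instead writes $\Upsilon_g(\tfrac{n}{2}+i\xi)$ explicitly, via the Hadamard factorization of Proposition~\ref{srdet.prop} and the Patterson--Perry functional equation, as a sum of rational terms $(n-2\zeta)/(\xi^2+(\zeta-\tfrac{n}{2})^2)$ plus polynomial corrections, multiplies by $t^m$ with $m\ge n+1$ (equivalently, applies $\partial_\xi^m$) to kill the polynomials, and then Fourier-transforms term by term; each rational term contributes $\pm 2\pi e^{(\zeta-n/2)|t|}$ by an elementary residue computation. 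This is where the restriction to $t\ne 0$ actually enters. Note also that in the general (topological) perturbation case the polynomial factor is only known to have degree bounded by $(n+1)(n+2)$ (via Proposition~\ref{detSA.prop}), not $n+1$, so $m$ may need to exceed $n+1$; your proposal implicitly assumes the sharper bound, which the paper proves only for non-topological perturbations.
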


\medskip\noindent
\emph{Remarks:}
\begin{enumerate}
\item   For surfaces with hyperbolic ends this Poisson formula was proven by 
Guillop\'e-Zworski \cite[Thm.~5.7]{GZ:1997}.  And for conformally compact hyperbolic manifolds it
was proven by Guillarmou-Naud \cite[Thm.~1.1]{GN:2006}, using the factorization of the
Selberg zeta function from Patterson-Perry \cite{PP:2001}. 
\item  The formula of Theorem~\ref{poisson.thm} can be extended through $t=0$ if both sides 
are multiplied by $t^{m}$ for $m$ sufficiently large.  In the case where $(X,g)$ is a non-topological 
perturbation (i.e. $X=X_0$), we can take $m = n+1$.   
\end{enumerate}

For general asymptotically hyperbolic manifolds, Joshi-S\'a Barreto \cite[Thm.~4.2]{JS:2001} 
showed that the singularities of the wave $0$-trace are contained in the set of periods of
closed geodesics.  One consequence of the Poisson formula is 
that $\Rsc_g$ determines this set of singularities, which we would expect to determine the 
periods of closed geodesics of $(X, g)$.  Another consequence is the following
lower bound:

\begin{theorem}\label{lowerbound.thm}
For $(X,g)$ a compactly supported perturbation of a conformally
compact hyperbolic manifold, we have
$$
\Nsc(r) \ge c  B(X,g) \> r^{n+1}.
$$
where
$$
B(X,g) = \begin{cases}|\ovol(X,g)| & n \text{ odd }(\dim X\>\text{is even})  \\
|\chi(X)| & n \text{ even }(\dim X\>\text{is odd})  \end{cases}
$$
\end{theorem}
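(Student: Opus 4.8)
The plan is to derive the lower bound on $\Nsc(r)$ from the Poisson formula of Theorem~\ref{poisson.thm} by a singularity-counting argument at $t=0$, in the spirit of the wave-trace methods of Guillop\'e--Zworski and Melrose. The key observation is that the right-hand side of the Poisson formula has a term that is singular at $t=0$ of a very specific order: the geometric term $-A(X)\cosh(t/2)/(2\sinh|t|/2)^{n+1}$ carries a pole of order $n+1$ when $n$ is even, while in the odd case one must instead extract the $0$-volume contribution, which enters through the regularization of the wave trace itself (the $\otr$ on the left-hand side). So the first step is to make precise what singularity at $t=0$ the left-hand side must have: using Remark~(2), after multiplying by $t^{n+1}$ the identity extends across $t=0$, and the coefficient of the leading singularity of $\otr[\cos(t\sqrt{\Delta_g-n^2/4})]$ as $t\to 0$ is, up to an explicit dimensional constant, $A(X)$ in the odd-$\dim X$ case and $\ovol(X,g)$ in the even-$\dim X$ case (this is essentially the heat-trace / wave-trace small-time asymptotic for the $0$-regularized trace, cf. the computation behind the $\otr$ in \cite{GZ:1997, JS:2001}).

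Next I would argue that this same leading singularity must be reproduced by the spectral side $\tfrac12\sum_{\zeta\in\Rsc_g} e^{(\zeta-n/2)|t|}$. Writing $h_r$ for a smooth cutoff supported near $t=0$ at scale $1/r$ and pairing both sides of the Poisson formula against a suitable test function (a rescaled bump, or a derivative thereof of order $n+1$, chosen to isolate the $t^{-(n+1)}$ homogeneity), one converts the statement ``the sum has a pole of order $n+1$ at $0$ with coefficient proportional to $B(X,g)$'' into a quantitative lower bound: if too few resonances had modulus $\le r$, the truncated sum $\sum_{|\zeta|\le r} e^{(\zeta-n/2)|t|}$ together with the tail (which is controlled, being a convergent sum once $\re\zeta$ is bounded above, using the upper bound $\Nsc(r)=O(r^{n+1})$ from Theorem~\ref{upbound.thm} to guarantee convergence of the relevant Abel-type sums) could not generate a singularity of the required strength. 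Concretely, a standard Tauberian-type estimate gives $\sum_{|\zeta|\le r} e^{(\zeta-n/2)|t|} \sim (\text{const})\,\Nsc(r)$ when $|t|\sim 1/r$, and matching this against the $t^{-(n+1)}$ blow-up forces $\Nsc(r)\ge c\,B(X,g)\,r^{n+1}$.

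The main obstacle I expect is the two regularization issues that have no analogue in the compact or the surface setting. First, the left-hand side is a $0$-trace, not an honest trace, so to read off its $t\to0$ behavior one cannot simply cite a closed-manifold heat expansion; instead one must track how the finite-part operation $\FPe$ interacts with the short-time asymptotics, and in the odd-dimensional case the answer genuinely depends on $\ovol(X,g)$ (which, as the excerpt notes, is not even conformally invariant there) — so the identification of the leading coefficient with $B(X,g)$ requires the careful $0$-trace computation rather than being automatic. Second, making the tail of the resonance sum rigorous needs the a priori polynomial bound $\Nsc(r)=O(r^{n+1})$: without it the distributional identity cannot be localized at $t=0$ with quantitative control. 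I would therefore structure the argument as: (i) establish the precise singularity of $\otr[\cos(t\sqrt{\Delta_g-n^2/4})]$ at $t=0$ with coefficient $c\,B(X,g)$; (ii) invoke Theorem~\ref{upbound.thm} to control the high-modulus part of $\Rsc_g$ and justify splitting the spectral sum; (iii) apply a Tauberian argument comparing the localized spectral sum at scale $|t|\sim1/r$ with the known singularity, yielding the stated lower bound with an explicit, though not optimized, constant $c$.
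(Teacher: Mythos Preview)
Your proposal is correct and follows essentially the same route as the paper: identify the leading singularity of the wave $0$-trace at $t=0$ (the paper does this via Proposition~\ref{wt.sing}, giving the coefficient $a_0$ proportional to $\ovol(X,g)$), pair the Poisson formula against a rescaled bump $\phi_\lambda(t)=\lambda\phi(\lambda t)$ with $\phi\in C^\infty_0(\bbR_+)$ to obtain the lower bound $\sum_{\zeta}\hat\phi(i(\zeta-\nh)/\lambda)\ge cB(X,g)\lambda^{n+1}$ (Lemma~\ref{hp.zeta}, with the parity split exactly as you anticipated), and then convert this to a counting-function bound via the decay of $\hat\phi$, a Stieltjes integral, and the upper bound (\ref{Nsc.bound}). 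The only minor technical point worth noting is that the test function is supported \emph{away} from $t=0$ (in $\bbR_+$) before rescaling, which is what makes $\hat\phi$ entire and keeps the geometric $\cosh/\sinh$ term integrable; your phrase ``cutoff supported near $t=0$'' could be misread, but the mechanism you describe is the right one.
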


\medskip\noindent
\emph{Remarks:}
\begin{enumerate}
\item 
The derivation of Theorem~\ref{lowerbound.thm} from Theorem~\ref{poisson.thm} follows
the arguments in Guillop\'e-Zworski \cite{GZ:1997}.  They established the optimal 
lower bound on $N(r)$ for general surfaces with hyperbolic ends.  (For $n=1$, we have
$d_k = 0$ for all $k$, so that $N(r) = \Nsc(r)$.)\footnote{The proof of this in \cite[Lemma~2.8]{GZ:1997} 
is incomplete;  see \cite[\S8.5]{Borthwick} for a corrected version.}  
The same methods were adapted by Perry \cite{Perry:2003}
to prove Theorem~\ref{lowerbound.thm} for conformally compact hyperbolic manifolds.
\item  Prior to Theorem~\ref{lowerbound.thm} there have been no existence results
for resonances in the general case of conformally compact manifolds hyperbolic near infinity.  In the broader 
\emph{asymptotically hyperbolic} class (conformally compact with $|d\rho|_{\bar g} = 1$ on $\bX$)
there are currently no general bounds and no existence results for resonances.
\item  If $h$ is conformally flat and $n \ge 2$, then $\tS_g(\nh+k) = \Delta_h^k$ \cite{GZ:2003}.  
Hence $d_k$ equals the number of connected components of $\bX$.  In this case
$\Nsc(r) = N(r) + O(r)$ so the lower bound applies to $N(r)$ as well.
\item  Guillarmou-Naud \cite[Prop.~2.2]{GN:2006} show that if $n>2$ and $h$ lies in the conformal class of a
metric $h_0$ with constant nonzero sectional curvature $\kappa$, then
$$
d_k = \ker \biggl(\prod_{j=1}^k \Bigl(\Delta_{h_0} + \kappa (\nh-j)(\nh+j-1)\Bigr) \biggr).
$$
In particular, when $\kappa \le 0$, the sequence of $d_k$'s is bounded and $\Nsc(r) = N(r) + O(r)$
so the lower bound extends to $N(r)$ as above.
\end{enumerate}

\bigbreak
This paper is organized as follows.  In \S\ref{param.sec} we recall the parametrix
construction for the resolvent from Guillop\'e-Zworski \cite{GZ:1995b}.  We
use this to derive formulas for the scattering matrix in \S\ref{scm.sec}, producing
a renormalized version of the scattering determinant.
Growth estimates on the various components of these formulas are obtained
in \S\ref{growth.sec}.  In \S\ref{optub.sec} we apply these estimates to bound the renormalized scattering
determinant in a half-plane, completing the proof of Theorem~\ref{upbound.thm}.  
Global estimates on the renormalized scattering determinant are derived in 
\S\ref{det.est.sec}.  In \S\ref{fact.sec} we analyze the relative
scattering determinant between two metrics which agree near infinity.  We also
compare our renormalized scattering determinant to the more intrinsic scattering
determinant introduced by Guillarmou \cite{Gui:2005b} in even dimensions.
The Poisson formula (Theorem~\ref{poisson.thm}) is proven in \S\ref{poisson.sec},
and then applied to derive Theorem~\ref{lowerbound.thm} in \S\ref{lbr.sec}.
Finally, in \S\ref{scphase.sec} we define a regularized scattering phase and
show that it satisfies Weyl-type asymptotics.

\vskip12pt\noindent
\textbf{Acknowledgment.}   Thanks to Colin Guillarmou for some 
helpful remarks and corrections.

\section{Parametrix construction}\label{param.sec}

Let $(X,g)$ be an $(n+1)$-dimensional manifold which is conformally compact manifold and
hyperbolic near infinity.  In this context, Guillop\'e-Zworski \cite[Lemma~3.1]{GZ:1995b} gave 
a refinement of the more general Mazzeo-Melrose parametrix construction \cite{MM:1987}.  
In particular, they produced meromorphic families of bounded operators,
$$
M_N(s): \rho^N L^2(X, dg) \to \rho^{-N} L^2(X, dg),
$$
and compact operators,
$$
K_N(s): \rho^N L^2(X, dg) \to \rho^N L^2(X, dg),
$$
for $N\in\bbN$, such that for $\re s > -N+\nh$
\begin{equation}\label{dm.ik}
\Ds M_N(s) = I - K_N(s).
\end{equation}
Both $M_N(s)$ and $K_N(s)$ have simple poles with finite rank residues at the points $s\in -\bbN$.
The meromorphic continuation of the resolvent follows by application of the analytic
Fredholm theorem to invert $I - K_N(s)$, which yields
\begin{equation}\label{r.mik}
R_g(s) = M_N(s)(I-K_N(s))^{-1}
\end{equation}
for $\re s > -N+\nh$.

Later we will need to refer to the explicit formulas for the parametrix and error terms, 
so we will review the construction from \cite{GZ:1995b}.  The assumption of hyperbolic near infinity
guarantees the existence of a collection of neighborhoods $Y_j\subset \barX$ such that
$\cup Y_j$ covers a neighborhood of $\bX$, with isometries
$$
\iota_j:Y_j \to \{z \in \bbH^{n+1}:\> |z|<1\},
$$
where $\bbH^{n+1}$ is the upper half-space $\bbR^n\times\bbR_+$ with the standard hyperbolic metric.
On each $Y_j$ the isometry $\iota_j$ defines a set of coordinates $(x,y)\subset \bbR^n\times\bbR_+$.
We will set $U_j := Y_j\cap \bX$, so that $\{U_j\}$ forms an open cover
for $\bX$.  For each neighborhood we define smooth functions $\gamma_j \in \cinf(U_j)$ by
$$
\gamma_j = \lim_{\rho \to 0} \frac{y|_{Y_j}}{\rho}
$$

The parametrix is built from pullbacks of the model resolvent $R_0(s) := (\Delta_{\bbH^{n+1}} - s(n-s))^{-1}$.  
To patch the pieces together, we introduce a set of smooth functions $\chi^j$, supported in $Y_j$,
such that $\chi := \sum_j \chi^j$ is equal to $1$ in some neighborhood of $\bX$.
Moreover, the $\chi^j$ can be constructed in the form $\varphi^j \psi^j$, where 
$\psi_j$ depends only on $y$ in the $Y_j$ coordinates,  
$\{\varphi^j\}$ gives a partition of unity for $\bX$, and each $\varphi^j$ is extended into $Y_j$ as a function
that depends only on $x$.   We also introduce $\psi_1^j$ and $\varphi_1^j$, with strictly greater supports,
such that
$$
\psi_1^j \psi^j = \psi^j,\qquad \varphi_1^j\varphi^j = \varphi^j.
$$
Let $\chi_1^j = \psi_1^j \varphi_1^j$ and $\chi_1 := \sum_j \chi_1^j$.  Finally, let 
$\chi_0 \in \cinf(X)$ equal $1$ in some neighborhood of $\bX$, with 
support contained inside that of $\chi$ so that $\chi_0 \chi = \chi_0$. 

Choose $s_0$ with $\re s_0 > \nh$ such that $R_g(s_0)$ is well defined. 
The first step towards the parametrix is
$$
M_0(s) := (1-\chi_0) R_g(s_0) (1- \chi) +  \sum_j \chi_1^j \> \iota_j^*(R_0(s)) \chi^j.
$$
This gives
\begin{equation}\label{ds.m0}
\begin{split}
\Ds M_0(s) & = I - [\Delta_g, \chi_0] R_g(s_0) (1-\chi) \\
&\quad + (s_0(n-s_0) - s(n-s)) (1-\chi_0) R_g(s_0) (1-\chi) \\
&\quad + \sum_j \bigl[\Delta_g, \chi_1^j\bigr] \> \iota_j^*(R_0(s))\>  \chi^j.
\end{split}
\end{equation}

The construction proceeds by solving away error terms at the boundary.  For this purpose,
the substitution $u = y^2$ is used to alter the smooth structure in local coordinates.
With $w = (x,u)$, $w' = (x',u')$, the model resolvent kernel on $\bbH^{n+1}$ has the expansion
$$
R_0(s;z,z') = \sum_{k=0}^\infty  a_k(s) (uu')^{s/2+k} q(w,w')^{-s-2k},
$$
where
$$
q(w,w') := |x-x'|^2 + u + {u'},
$$ 
and
$$
a_k(s) := \frac{\pi^{-n/2} \Gamma(s+2k)}{2k!\> \Gamma(s - \tfrac{n}2+k+1)}.
$$
In the $w$-coordinates, we have
$$
\Ds u^{s/2+k} f = -4k(s-\tfrac{n}2+k) u^{s/2+k} f + u^{s/2+k+1} Q(\tfrac{s}2+k) f,
$$
with
$$
Q(t) := 2(n-2-4t)\del_{u} - 4u \del_{u}^2 + \Delta_x.
$$
The formula for the parametrix $M_N(s)$ is
$$
M_N(s) = M_0(s) + \sum_{p=0}^{N-1} \sum_j \psi_1^j N^j_{p}(s) \chi^j,
$$
where the integral kernel of $N^j_{p}(s)$ is given in the local coordinates for $Y_j$ by
$$
N^j_{p}(s;w,w') :=  u^{s/2+p+1} \sum_{k=0}^p b_{k,p}(s) \prod_{l=1}^{p-k} Q(\tfrac{s}2+k+l) 
[\Delta_x, \varphi_1^j] q(w,w')^{-s-2k} {u'}^{s/2+k}
$$
with $\prod_{l=1}^{p-k} Q(\tfrac{s}2+k+l)$ replaced by 1 if $p=k$, and
$$
b_{k,p}(s) := \frac{\pi^{-n/2} 2^{-2p+2k-3} \Gamma(s+2k)}{(p+1)!\> \Gamma(s - \frac{n}2 + p + 2)}
$$

After plugging the expression for $M_N(s)$ into (\ref{dm.ik}), we obtain the error term
\begin{eqnarray}
K_N(s) & = &  - [\Delta_g, \chi_0] R_g(s_0) (1-\chi) + (s_0(n-s_0) - s(n-s)) (1-\chi_0) R_g(s_0) (1-\chi) \notag\\
& & + \sum_j [\Delta_g, \psi_1^j]\biggl( \varphi_1^j R_0(s) + \sum_{p=0}^{N-1}
N_p^j(s)\biggr) \chi^j  \label{kn.formula}\\
& & + \sum_j  \psi_1^j (L_N^j(s) + L_\sharp^j(s)) \chi^j, \notag
\end{eqnarray}
where, in the coordinates of $Y_j$,
$$
L_N^j(s; w,w') :=  u^{s/2+N+1} \sum_{k=0}^{N-1} b_{k,N-1}(s) \prod_{l=1}^{N-k} Q(\tfrac{s}2+k+l) 
[\Delta_x, \varphi_1^j] q(w,w')^{-s-2k} {u'}^{s/2+k},
$$
and
$$
L_\sharp^j(s;w,w') :=  \psi_1^j \sum_{k=N}^{\infty} a_{k}(s) u^{s/2+k+1} 
[\Delta_x, \varphi_1^j] q(w,w')^{-s-2k} {u'}^{s/2+k} .
$$

The remainder term $K_N(s)$ is the sum of a compactly supported pseudodifferential
operator of order $-1$ and a smoothing term
with kernel contained in $\rho^{s+2N+2} {\rho'}^s \cinf(X\times X)$.  In
particular, $K_N(s)$ is compact on $\rho^N L^2(X, dg)$ for $\re s > -N+\nh$ and 
the formula (\ref{r.mik}) is valid in this range (assuming that $s_0$, which we have suppressed
from the notation, was chosen appropriately).
Furthermore, the operator $K_N(s)^{n+1}$ is trace class on $\rho^N L^2(X, dg)$, and 
\begin{equation}\label{DN.def}
D_N(s) := \det (I - K_N(s)^{n+1})
\end{equation}
defines a meromorphic function for $\re s > -N+\nh$.

A few extra assumptions are needed in order to produce estimates.  First of all,
we assume that for $\delta>0$, in each $Y_j$ coordinate system we have $y<\delta$ in the support
of $\psi^j$ and $y>2\delta$ in the support of $1-\psi_1^j$.  By changing the definition of 
$\rho$, if necessary, we may assume also that $\rho = 1$ when $y>\delta$ as well.
According to \cite[Thm.~1.4.2]{Hormander:I}, we can require that the derivatives
of $\varphi^j$ and $\varphi_1^j$ satisfy quasi-analytic bounds of the form
\begin{equation}\label{qanal}
\norm{D^\alpha \varphi}_\infty \le C^{|\alpha|} e^{|\alpha| \log |\alpha|},
\end{equation}
for any multi-index $\alpha$.  (For $\varphi_1^j$ this is equivalent to 
\cite[eq.~(4.1)]{GZ:1995b}, although stated slightly differently.)  Finally,
we can assume that $\rho$ is given by $\sum_j \varphi^j \>y|_{Y_j}$ near $\bX$, so that
the functions $\gamma_j$ satisfy estimates of the form (\ref{qanal}) also.

With these assumptions (and assuming $\delta$ sufficiently small) Guillop\'e-Zworski proved the following:
\begin{proposition}\label{GZprop}
Given $\eta>0$, there exists a constant $C$ independent of $N$ such that
\begin{equation}\label{KN.bound}
\norm{K_N(s)} \le e^{CN},
\end{equation}
and
\begin{equation}\label{DN.bound}
|D_N(s)| \le \det (I + |K_N(s)|^{n+1})\le e^{CN^{n+2}},
\end{equation}
for $N$ sufficiently large, $|s| < N/C$ and $d(s,-\bbN_0)>\eta$.
\end{proposition}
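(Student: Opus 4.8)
\emph{Proof strategy.} The plan is to follow the estimates of Guillop\'e--Zworski \cite[\S4]{GZ:1995b}, sketched here because later sections will need to refer to them. After conjugating by $\rho^N$, so that every operator acts on the fixed space $L^2(X,dg)$, I would estimate separately the contributions to $K_N(s)$ of the four groups of terms in the explicit formula (\ref{kn.formula}). The first two terms, $-[\Delta_g,\chi_0]R_g(s_0)(1-\chi)$ and $(s_0(n-s_0)-s(n-s))(1-\chi_0)R_g(s_0)(1-\chi)$, involve only the fixed resolvent $R_g(s_0)$ and depend on $s$ at worst quadratically, so their norms are $O(|s|^2)\le e^{CN}$ when $|s|<N/C$; they are supported in a compact part of the interior where $\rho\equiv1$ (so conjugation leaves them untouched), and they carry the only diagonal singularity of $K_N(s)$ --- the first being a fixed smoothing operator ($d\chi_0$ and $1-\chi$ have disjoint supports), the second being $(s_0(n-s_0)-s(n-s))$ times a fixed compactly supported operator of order $-2$.

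For the remaining terms --- those built from $R_0(s)$ and the kernels $N_p^j(s)$, $L_N^j(s)$, $L_\sharp^j(s)$ --- the point to exploit is that $[\Delta_g,\psi_1^j]$ and $\chi^j$ have disjoint supports (the former in $\{y>2\delta\}$, the latter in $\{y<\delta\}$), and $[\Delta_x,\varphi_1^j]$ vanishes on the support of $\chi^j$; thus on the relevant region of $Y_j\times Y_j$ one has $y,y'<\delta$ and $q(w,w')=|x-x'|^2+u+u'\ge c_0>0$, so all these contributions are smoothing. I would estimate their Hilbert--Schmidt norms directly from the kernels, using: (i) Stirling's formula to bound the coefficients $a_k(s)$ and $b_{k,p}(s)$ for $0\le k\le p\le N-1$, $|s|<N/C$, $d(s,-\bbN_0)>\eta$; (ii) $|u^{s/2+m}|=u^{\re s/2+m}$ with $u<\delta^2$ and $m\le N$, so that each $\delta^{2m}$, together with the conjugating $\rho^N$, offsets the worst-case loss $u^{\re s/2}$ (a power $\rho^{-|s|}$ near $\bX$) --- this is exactly why $C$ must be taken large; (iii) the quasi-analytic bounds (\ref{qanal}) for the at most $2(p-k)+2$ derivatives of $\varphi_1^j$ produced by $[\Delta_x,\varphi_1^j]$ and by the iterated operators $\prod_{l=1}^{p-k}Q(\tfrac{s}2+k+l)$; these contribute a factor as large as $C^NN^N$, which is however exactly compensated by the factorial $(p+1)!$ in the denominator of $b_{k,p}(s)$; and (iv) the lower bound $q\ge c_0$, together with $q\ge 2\sqrt{uu'}+c_0$, to absorb $q^{-s-2k}$ and its derivatives and to make the series in $k$ converge geometrically once $\delta$ is small. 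Summing over $k$, $p$ and the finitely many $j$, the net outcome is $\norm{K_N(s)}\le e^{CN}$, which is (\ref{KN.bound}).

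For (\ref{DN.bound}), the first inequality is Weyl's inequality: writing $D_N(s)=\prod_j\bigl(1-\lambda_j(K_N(s))^{n+1}\bigr)$ over the eigenvalues of $K_N(s)$, log-majorization of eigenvalue moduli by singular values gives $|D_N(s)|\le\prod_j\bigl(1+\mu_j(K_N(s))^{n+1}\bigr)=\det(I+|K_N(s)|^{n+1})$. For the second, I would split $\sum_j\log(1+\mu_j^{n+1})$ at $\mu_j=1$. Writing $K_N(s)$ as the order-$(-2)$ compactly supported piece $P$ (of norm $O(N^2)$) plus a smoothing remainder, the Weyl law for $P$ gives $\mu_j(P)\le CN^2j^{-2/(n+1)}$, so only $O(N^{n+1})$ indices have $\mu_j(K_N(s))\ge1$ (the smoothing part has singular values decaying faster than any power of $j$, with constants $\le e^{CN}$ by (\ref{KN.bound}) and the $\rho^{2N}$-gain in its kernel); each of these contributes at most $\log(1+\norm{K_N(s)}^{n+1})=O(N)$, and the rest contribute at most $\tr|K_N(s)|^{n+1}=O(N^{n+2})$. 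Hence $|D_N(s)|\le e^{CN^{n+2}}$.

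The step I expect to be the main obstacle is the bookkeeping in (iii)--(iv): tracking, through the intricate kernels of $N_p^j$, $L_N^j$ and $L_\sharp^j$, the interplay between the $\sim N$ iterated second-order operators $Q$ (producing $\sim 2N$ derivatives, hence $N^N$ via the quasi-analytic bounds), the factorial and Gamma-function coefficients that must cancel this growth, the powers $u^{\re s/2+m}$ that degenerate as $\re s\to-\infty$ near $\bX$, and the conjugation by $\rho^N$ --- and then verifying that for $|s|<N/C$ with $C$ large enough everything combines to at most $e^{CN}$ with a constant genuinely independent of $N$. Once that estimate is in place, the singular-value count for the determinant bound is comparatively routine.
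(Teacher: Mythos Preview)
The paper does not give its own proof of this proposition; it simply records that the two bounds are paraphrased from \cite[eq.~(3.6), Prop.~4.1, and Lemma~5.2]{GZ:1995b}. Your plan to follow Guillop\'e--Zworski is therefore exactly what the paper does, and your sketch of (\ref{KN.bound}) captures their argument faithfully. (Two harmless slips: the paper calls the compactly supported pseudodifferential piece order $-1$, since the condition $\chi_0\chi=\chi_0$ alone does not force $\supp\nabla\chi_0$ and $\supp(1-\chi)$ to be disjoint; and for the $[\Delta_g,\psi_1^j]\cdots\chi^j$ terms the left variable sits in $\{y>2\delta\}$, not $\{y<\delta\}$. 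Neither affects the bound.)

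For (\ref{DN.bound}) there is a real gap. The assertion $\tr|K_N(s)|^{n+1}=O(N^{n+2})$ is false as written: the smoothing remainder $S$ has operator norm up to $e^{CN}$, so the full trace can be exponentially large in $N$. You only need the tail $\sum_{\mu_j<1}\mu_j^{n+1}$, but even the index count $\#\{j:\mu_j(K_N)\ge1\}=O(N^{n+1})$ is not justified by your description. The qualitative claim ``$\mu_j(S)$ decays faster than any power, with constants $\le e^{CN}$'' cannot hold with a single constant independent of the power---the derivative bounds you cite give $\norm{\Delta^m S}\le C^{2m}(m+N)^{2m}e^{CN}$, so $\mu_j(S)\le C^{2m}(m+N)^{2m}e^{CN}j^{-2m/(n+1)}$ with constants that grow factorially in $m$; and the $\rho^{2N}$-gain in the kernel governs behavior near $\bX$, not singular-value decay in $j$. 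The missing step is the optimization over $m$ (taking $m\sim j^{1/(n+1)}$), which yields the quantitative estimate $\mu_j(S)\le e^{CN-cj^{1/(n+1)}}$; this is what \cite[Prop.~4.1, Lemma~5.2]{GZ:1995b} actually prove, by the same mechanism reproduced in Lemma~\ref{B.est} of this paper for $B_N(s)$. With that bound in hand your split at $\mu_j=1$ goes through and gives (\ref{DN.bound}).
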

These results are paraphrased from \cite[eq.~(3.6), Prop.~4.1, and Lemma~5.2]{GZ:1995b}.

\section{Scattering matrix}\label{scm.sec}

The resolvent kernel gives rise to a generalized Poisson kernel defined by
$$
E_g(s;z,x') := \lim_{\rho'\to 0} {\rho'}^{-s} R_g(s;z,z'),
$$
for $z\in X$ and $x' \in \bX$, where we use $\rho'$ to denote $\rho(z')$ and
make the implicit assumption that $z' \to x'$ as $\rho \to 0$.
From the parametrix construction (\ref{r.mik}), it is not difficult to see that 
$E_g(s; \cdot,\cdot) \in \cinf(X\times \bX)$.  This kernel defines a \emph{Poisson operator},
$$
E_g(s): L^2(\bX, dh) \to  \rho^{-N} L^2(X,dg)
$$
for $\re s > -N+\nh$, by
$$
E_g(s)f := \int_\bX E_g(s;\cdot,x') f(x') \>dh(x').
$$
(Recall that $h$ is the metric on $\bX$ induced by $\rho^2 g$.)

The term Poisson operator refers to the fact that for $f_1 \in \cinf(\bX)$, $u = E_g(s)f_1$ 
is a solution of $\Ds u = 0$.  This solution is contained in $\rho^{n-s} \cinf(\barX) 
+ \rho^s \cinf(\barX)$, with leading behavior
\begin{equation}\label{Egf.lead}
(2s-n) E_g(s)f_1 \sim \rho^{n-s} f_1 + \rho^s f_2,
\end{equation}
for some $f_2 \in \cinf(\bX)$.  
The scattering matrix is defined as the map $S_g(s):f_1 \mapsto f_2$, which is a pseudodifferential
operator of order $2s-n$.  For details of the definitions of Poisson operator and scattering
matrix, see \cite{JS:2000, GZ:2003}.  

By the symmetry of the resolvent, $S_g(s) = S_g(s)\tsp$.  
For $\re s \ge \nh$, $s\notin \bbN/2$, solutions of $\Ds u = 0$ are uniquely specified by the 
$\rho^{n-s} f_1$ term in the boundary expansion.
Thus, by (\ref{Egf.lead}) we have $(2s-n) E_g(s) f_1 = - (2s-n) E_g(n-s) f_2$ for $\re s = \nh$, $s\ne \nh$.  
This implies some useful meromorphic identities:
\begin{equation}\label{SE.identities}
\begin{split}
S_g(s)^{-1} & = S_g(n-s),  \\
S_g(n-s) E_g(s)\tsp & = -E_g(n-s)\tsp.
\end{split}
\end{equation}
The off-diagonal integral kernel (with respect to $dh$) of the scattering matrix 
can be derived directly from the resolvent:
$$
S_g(s;x,x') = \lim_{\rho, \rho' \to 0}  (\rho\rho')^{-s} R_g(s;z,z')
$$
for $x\ne x'$.  This relationship is useful for extracting formulas for the scattering matrix kernel
from the parametrix construction for the resolvent.

By (\ref{r.mik}) we can write
\begin{equation}\label{rx.chi}
R_g(s) = M_N(s) + R_g(s) K_N(s),
\end{equation}
for $\re s > -N + \nh$.
Multiplying the kernels in this formula by $\rho^{-s}$ on the left and ${\rho'}^{-s}$
on the right
and taking the restriction to $\bX\times\bX$, off the diagonal, yields a formula for 
the scattering matrix.  The only contribution from $M_N(s)$ term is the operator 
\begin{equation}\label{A.def}
A(s) = \sum_j \varphi_1^j \gamma_j^s\> \iota_j^*(S_0(s)) \>\gamma_j^s \varphi^j,
\end{equation}
coming from the $M_0(s)$ term.
To denote the boundary limit of $K_N(s)$ we introduce
$$
B_N(s;z,x') := \lim_{\rho'\to 0} {\rho'}^{-s} K_N(s; z,z'),
$$
This kernel is contained in $\rho^{s+2N+2} \cinf(\barX\times \bX)$ and defines
a smoothing operator that maps $L^2(\bX, dh) \to \rho^N L^2(X)$ for $\re s > -N + \nh$.
With these definitions, (\ref{rx.chi}) gives
\begin{equation}\label{S.AEB}
S_g(s) = A(s) + E_g(s)\tsp B_N(s),
\end{equation}
for $\re s > -N + \nh$.

By the identities (\ref{SE.identities}) we can rewrite (\ref{S.AEB}) as
\begin{equation}\label{S.SAKB}
S_g(n-s)A(s) = I + E_g(n-s)\tsp B_N(s),
\end{equation}
which shows in particular that the Fredholm determinant of $S_g(n-s)A(s)$ is well-defined
(as a meromorphic function), since $E_g(n-s)\tsp B_N(s)$ is a smoothing operator.
We can thus define a renormalized scattering determinant by
\begin{equation}\label{vartheta.def}
\vartheta_g(s) := \det S_g(n-s)A(s).
\end{equation}

There are two variants of (\ref{S.SAKB}) that we will use to produce estimates of $\vartheta_g(s)$.
The first comes from using (\ref{r.mik}) to write
\begin{equation}\label{e.fik}
E_g(s)\tsp = F(s) (I - K_N(s))^{-1},
\end{equation}
for $\re s > -N + \nh$, where
\begin{equation}\label{F.def}
F(s; x,z') := \lim_{\rho \to 0} \rho^{-s} M_N(s; z,z').
\end{equation}
(This limit is independent of $N$ because only the $M_0(s)$ term contributes.)
Applying (\ref{e.fik}) in (\ref{S.SAKB}) gives
\begin{equation}\label{sa.fkb}
S_g(n-s)A(s) =  I + F(n-s) (I - K_N(n-s))^{-1} B_N(s),
\end{equation}
valid for $|\re s - \nh| < N$.

The second variant comes from using the transpose of (\ref{rx.chi}) to derive
$$
E_g(s)\tsp =  G_N(s)\tsp + B_N(s)\tsp R_g(s),
$$
for $\re s > - N + \nh$, where 
$$
G_N(s;z,x') := \lim_{\rho'\to 0} {\rho'}^{-s} M_N(s;z,z').
$$
In conjunction with (\ref{S.SAKB}), this gives
\begin{equation}\label{sa.gbrb}
S_g(n-s)A(s) = I + \Bigl(G_N(n-s)\tsp + B_N(n-s)\tsp R_g(n-s)\Bigr) B_N(s),
\end{equation}
valid for $|\re s-\nh| < N$.

\section{Growth estimates}\label{growth.sec}

In this section we will give estimates for the various operators appearing in
(\ref{sa.fkb}) and (\ref{sa.gbrb}).  Many of these are quite similar to the estimates
by Guillop\'e-Zworski \cite{GZ:1995b}.  We will control the growth of $\vartheta_g(s)$
by estimating the singular values of $S_g(n-s)A(s)$.  This reduces to a combination of singular value
estimates of the smoothing term $B_N(s)$ and operator-norm estimates of the other terms.
Throughout the section we will follow the convention that $C$ is a large constant whose
value may change from line to line.

From (\ref{kn.formula}) we can read off an expression for $B_N(s)$,
$$
B_N(s) = \sum_j B_N^j(s), 
$$
where the kernel of $B_N^j(s)$ is supported in $Y_j \times U_j$ and is
given in local coordinates by
\[
\begin{split}
B^j_N(s;w,x') & :=  [\Delta_g, \psi_1^j] \varphi_1^j E_0(s;w,x') \gamma_j^s \varphi^j \\
&\quad +  [\Delta_g, \psi_1^j] \sum_{p=0}^{N-1} 
u^{s/2+p+1} b_{0,p}(s) \prod_{l=1}^{p} Q(\tfrac{s}2+l) [\Delta_x, \varphi_1^j] q_1(w,x')^{-s}  \gamma_j^s \varphi^j \\
&\quad + \psi_1^j u^{s/2+N+1} b_{0,N-1}(s) \prod_{l=1}^{N} Q(\tfrac{s}2+l) 
[\Delta_x, \varphi_1^j] q_1(w,x')^{-s}  \gamma_j^s \varphi^j,
\end{split}
\]
where $w = (x,u)$ and $q_1(w,x') := |x-x'|^2 + u$.
\begin{lemma}\label{B.est}
Given $\eta>0$, there exist constants $C, c$ independent of $N$ such that 
$\rho^{-N} B_N(s)$, as a map $L^2(\bX, dh) \to L^2(X, dg)$, satisfies
$$
\norm{\rho^{-N} B_N(s)} \le e^{CN},
$$
$$
\mu_k(\rho^{-N} B_N(s)) \le e^{CN - ck^{1/n}}.
$$
for $N$ sufficiently large, $|s| < N/C$, and $d(s,-\bbN_0)>\eta$.
\end{lemma}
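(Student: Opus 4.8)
The plan is to estimate each piece $B_N^j(s)$ separately, using the explicit local formula and the fact that $B_N(s)$ is a smoothing operator with kernel in $\rho^{s+2N+2}\cinf(\barX\times\bX)$. First I would isolate the $N$-dependence: the coefficients $b_{k,p}(s)$ and the products $\prod_{l} Q(\tfrac s2 + k + l)$ each contribute factors that, for $|s| < N/C$ and $d(s,-\bbN_0)>\eta$, are controlled by $e^{CN}$ after using Stirling on the Gamma factors and the crude bound that each $Q(t)$ is a second-order differential operator in $(x,u)$ with coefficients of size $O(|s|)=O(N)$. The commutators $[\Delta_g,\psi_1^j]$ and $[\Delta_x,\varphi_1^j]$ are where the quasi-analytic bounds (\ref{qanal}) on the derivatives of $\psi_1^j$, $\varphi_1^j$, and $\gamma_j$ enter; since these produce factors $C^{|\alpha|}e^{|\alpha|\log|\alpha|}$ and we apply at most $O(N)$ derivatives total across the product of $Q$'s, the accumulated constant is again of the form $e^{CN}$ — this is exactly the mechanism used by Guillop\'e-Zworski for the analogous resolvent error estimates in Proposition~\ref{GZprop}, so I would cite \cite[\S4--5]{GZ:1995b} for the bookkeeping.

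For the operator-norm bound, I would note that the kernel of $\rho^{-N}B_N^j(s;w,x')$ is, up to the $e^{CN}$ prefactor, a finite sum of terms of the shape $u^{a}\,\partial^\beta_{x,u}\bigl(q_1(w,x')^{-s}\bigr)$ times smooth cutoffs with $x'$-support in $U_j$ and $w$-support where $u$ is bounded; crucially the commutator structure forces the $x'$-derivatives of $\varphi_1^j$ to localize away from the diagonal $x=x'$, so $q_1(w,x') = |x-x'|^2 + u$ stays bounded below on the support, making $q_1^{-s}$ and its derivatives bounded (by $e^{C|s|}\le e^{CN}$) rather than singular. The power $u^{s/2+2N+2}$ appearing after the substitution, combined with the extra $\rho^{-N}\sim u^{-N/2}$, still leaves a nonnegative power of $u$ (for $\re s > -N+\nh$ in the relevant range), so the kernel is bounded and compactly supported; a Schur test then gives $\norm{\rho^{-N}B_N(s)}\le e^{CN}$.

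For the singular value bound, the point is that $\rho^{-N}B_N(s)$ is smoothing with a kernel that is real-analytic away from the diagonal in the boundary variable (or at least $C^\infty$ with controlled derivative growth via (\ref{qanal})), and whose $x'$-support lies in the fixed compact cover $\{U_j\}$ of the $n$-dimensional boundary. The standard estimate for eigenvalues of such operators — apply $\Lambda^{m}=(\tfrac14(\Delta_h+1))^{m/2}$ on the boundary side, bound the resulting operator norm by $C^{m}m!\,e^{CN}$ using the analytic regularity, and optimize over $m$ — yields $\mu_k\le e^{CN}\exp(-ck^{1/n})$, the $1/n$ exponent reflecting Weyl counting on the $n$-dimensional $\bX$. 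I would model this argument on \cite[Lemma~5.2]{GZ:1995b}, where precisely this type of $e^{CN-ck^{1/n}}$ bound is obtained for the smoothing part of $K_N(s)$. The main obstacle I anticipate is bookkeeping the $N$-dependence uniformly through the product $\prod_{l=1}^{N}Q(\tfrac s2+l)$: each factor both differentiates (invoking another layer of the quasi-analytic constants) and multiplies by coefficients growing like $N$, so one must check carefully that these combine to $e^{CN}$ and not something like $N^N$ or $e^{CN\log N}$ — this is the same delicate estimate behind (\ref{DN.bound}), and the restriction $|s|<N/C$ is exactly what makes it work, since it forces $\tfrac s2 + l$ to stay away from the bad points and keeps $|s|/l = O(1)$.
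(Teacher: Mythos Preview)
Your plan is correct and matches the paper's approach almost exactly: localize to each $B_N^j$, use Stirling on $b_{0,p}(s)$ together with the holomorphic extension of $q_1(w,x')^{-s}$ on the support of $[\Delta_x,\varphi_1^j]\varphi^j$ (Cauchy estimates), invoke the quasi-analytic bounds on the cutoffs and $\gamma_j$, and then get singular values by applying powers of the boundary Laplacian and optimizing against Weyl asymptotics on the $n$-dimensional boundary. The citations to \cite[\S4--5]{GZ:1995b} are the right ones.

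One small correction to your intermediate bound: after applying $\Delta_{x'}^m$ (equivalently $\Lambda^{2m}$) to the kernel, the paper obtains
\[
\bnorm{\rho^{-N}\Delta_{x'}^m B_N^j(s;\cdot,\cdot)}_\infty \le C^{2m}(m+N)^{2m}e^{CN},
\]
not $C^{m}m!\,e^{CN}$ as you wrote. The $(m+N)^{2m}$ is what actually appears because the kernel of $B_N^j$ already contains $\sim 2N$ built-in derivatives from $\prod_{l=1}^{N}Q(\tfrac{s}{2}+l)$, so the total derivative count is $2m+2N$, and the quasi-analytic constants combine accordingly. Your concern about $N^N$ versus $e^{CN}$ is well-founded; the mechanism that tames it is precisely the Stirling bound $b_{0,p}(s)\le e^{CN}p^{-2p}$, whose $p^{-2p}$ factor (coming from the $1/(p+1)!$ and $1/\Gamma(s-\tfrac{n}{2}+p+2)$ in the definition) cancels the factorial growth from the iterated $Q$'s. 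With the corrected intermediate bound, the optimization takes $m\approx (eC)^{-1}k^{1/n}-N$ rather than $m\approx ck^{1/n}$, but the outcome $e^{CN-ck^{1/n}}$ is the same.
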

\begin{proof}
First of all, Stirling's formula can be used to deduce that
$$
b_{0,p}(s) \le e^{CN} p^{-2p},
$$
for $|s| \le N/C$, $d(s,-\bbN_0)>\eta$.  The fact that $q_1(w,x')^{-s}$ extends to
a holomorphic function on a neighborhood of 
$$
\Bigl\{x \in \supp \nabla \varphi_1^j,\> x' \in \supp \varphi^j, \>0<u<\delta^2\Bigr\}
$$
can be used to deduce
$$
\Bigl| \psi_1^j(\sqrt{u}) [\Delta_x, \varphi_1^j(x)] \varphi^j(x') D_w^\alpha D_{x'}^\beta q(w,x')^{-s} \Bigr|
\le C^{|\alpha|+|\beta|} (|\alpha| + |\beta|)! e^{CN},
$$
via Cauchy's estimate.  Finally, we have the quasi-analytic estimates (\ref{qanal}) which
apply to $\varphi_1^j$, $\varphi^j$, and $\gamma_j$.

Combining these ingredients exactly as in the proofs of 
\cite[Lemma~4.1, Lemma~4.2 and Prop.~4.1]{GZ:1995b}, we can deduce that
\begin{equation}\label{dm.bnj}
\bnorm{\rho^{-N} \Delta_{x'}^m B_N^j(s;w,x')}_\infty 
\le  C^{2m} (m+N)^{2m} e^{CN},
\end{equation}
for $m, N \in \bbN$, $|s| \le N/C$, $d(s,-\bbN_0)>\eta$.  Note that we place no restriction on
the number of derivatives $m$.  

Using these estimates for the components of $B_N(s)$, together with the fact that
the metric $h$ is related to the Euclidean metric in local coordinates $x'$ by powers
of $\gamma_j$, we can deduce estimates in the operator norm for $L^2(\bX, dh) \to
L^2(X, dg)$,
\begin{equation}\label{dm.bn}
\bnorm{\rho^{-N} B_N(s) \Delta_h^m} \le  C^{2m} (m+N)^{2m} e^{CN},
\end{equation}
By Weyl's asymptotic for the eigenvalues of $\Delta_h$, we have
$$
\mu_k((\Delta_h+1)^{-m}) \sim Ck^{-2m/n}.
$$
Combining this with (\ref{dm.bn}) gives the estimates
$$
\mu_k(\rho^{-N} B_N(s)) \le k^{-2m/n} C^{2m} (m+N)^{2m} e^{CN},
$$
for all $m\in \bbN$.

The final step is to optimize the choice of $m$.  For $k > (eCN)^{n}$, we set 
$$
m = \bigl[(eC)^{-1} k^{1/n} - N + 1 \bigr],
$$
and with this choice we have
$$
k^{-2m/n} C^{2m} (m+N)^{2m} \le e^{c_1N - c_2k^{1/n}}.
$$
\end{proof}

\bigbreak
The operator $F(s)$ defined in (\ref{F.def}) is given explicitly by
\begin{equation}\label{F.E0}
F(s) = \sum_j \varphi_1^j \gamma_j^s \> \iota_j^*(E_0(s)\tsp) \> \chi^j
\end{equation}
where $E_0(s)$is the Poisson operator on $\bbH^{n+1}$, with kernel
$$
E_0(s;z,x') :=  c(s)  \left(\frac{y}{|x-x'|^2+{y}^2}\right)^s,
$$
where 
$$
c(s) := 2^{-1} \pi^{-n/2} \frac{\Gamma(s)}{\Gamma(s-\nh+1)}.
$$
Because of the singularity at $x=x', y=0$, it's easiest to use the
Hilbert-Schmidt norm to estimate this expression.
\begin{lemma}\label{F.est}
Let $\norm{\cdot}_2$ denote the Hilbert-Schmidt norm for $L^2(X, dg) \to L^2(\bX, dh)$.
Given $\eta>0$, there exists a constant $C$ independent of $N$ such that
$$
\norm{F(s) \rho^N}_2 \le e^{CN},
$$
for $N$ sufficiently large, $|s| < N/C$ and $d(s,-\bbN_0)>\eta$.
\end{lemma}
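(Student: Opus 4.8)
The plan is to estimate the Hilbert--Schmidt norm directly from the explicit kernel formula (\ref{F.E0}). Since $\bX$ is compact only finitely many of the neighborhoods $Y_j$ are needed, so by the triangle inequality it suffices to bound $\norm{F_j(s)\rho^N}_2$ for each summand $F_j(s) := \varphi_1^j\gamma_j^s\,\iota_j^*(E_0(s)\tsp)\,\chi^j$. Working in the $Y_j$-coordinates, in which $dg = (y')^{-n-1}\,dx'\,dy'$ and $dh = \gamma_j^{-n}\,dx$ on $U_j$, the kernel of $F_j(s)\rho^N$ with respect to $dg$ is
$$
\varphi_1^j(x)\,\gamma_j(x)^s\, c(s) \left(\frac{y'}{|x-x'|^2 + (y')^2}\right)^{s} \chi^j(x',y')\, \rho(x',y')^N ,
$$
(boundary variable $x$, interior variable with coordinates $(x',y')$), so that $\norm{F_j(s)\rho^N}_2^2$ is the integral of the square modulus of this expression against $dg\,dh$ over the support of $\varphi_1^j\,\chi^j$, which is compact in $(x,x')$ and has $0<y'<\delta$.

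Next I would dispose of the harmless factors. By Stirling's formula together with the condition $d(s,-\bbN_0)>\eta$ we have $|c(s)| \le e^{CN}$ for $|s|<N/C$; on the compact support of $\varphi_1^j$ the function $\gamma_j$ is bounded above and below, so $\gamma_j(x)^{2\re s - n} \le e^{CN}$; and since $\rho = \gamma_j^{-1}y' + O((y')^2)$ in these coordinates, on the support of $\chi^j$ (where $y'<\delta$, with $\delta$ fixed) we get $\rho(x',y')^{2N} \le e^{CN}(y')^{2N}$. What remains is to bound the model integral
$$
\int\!\!\int \left(\frac{y'}{|x-x'|^2 + (y')^2}\right)^{2\re s} (y')^{2N - n - 1}\, dx'\,dy'\,dx
$$
over the same supports. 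Here I split on the sign of $\re s$: for $\re s \ge 0$ one uses $y'/(|x-x'|^2+(y')^2) \le 1/y'$ to bound the Poisson factor by $(y')^{-2\re s}$, while for $\re s < 0$ one uses that $|x-x'|^2+(y')^2$ is bounded on the compact support to bound it by $e^{CN}(y')^{2\re s}$. In either case the $y'$-integral reduces to $\int_0^\delta (y')^{2N-n-1-2|\re s|}\,dy'$, which converges because $N - |\re s| > N(1-1/C) > \nh$ once $N$ is large, with value $\le e^{CN}$ since $\delta$ is fixed and $|\re s| < N/C$. The remaining $dx$ and $dx'$ integrals are over compact sets, hence bounded. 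Collecting the factors gives $\norm{F_j(s)\rho^N}_2^2 \le e^{CN}$, and summing over the finitely many $j$ finishes the proof.

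The only real point of care is the interplay between the boundary singularity of the Poisson kernel $E_0(s;z,x')$ as $z\to x'$ and the vanishing supplied by $\rho^N$: one must verify that, uniformly in $s$ with $|s|<N/C$, the $y'$-exponent $2N-n-1-2|\re s|$ stays $>-1$, which is exactly why $C$ (and hence $N$) must be taken large. Everything else is routine Stirling-and-compactness bookkeeping of the kind already carried out in \cite{GZ:1995b}.
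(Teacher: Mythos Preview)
Your argument is correct and follows the same overall strategy as the paper: localize to the charts $Y_j$, control $c(s)$ via Stirling under the hypothesis $d(s,-\bbN_0)>\eta$, and estimate the resulting Hilbert--Schmidt integral directly, the key point being that $2N-2|\re s|>n$ when $|s|<N/C$. The only technical difference is that the paper handles the model integral in one stroke by passing to polar coordinates $r=\sqrt{|x-x'|^2+y^2}$, $\omega=(x-x',y)/r$, which treats both signs of $\re s$ uniformly and yields the slightly sharper bound $|c(s)|\le C\brak{s}^{n/2-1}$, whereas you split on the sign of $\re s$ and use the crude pointwise bounds $y'/(|x-x'|^2+(y')^2)\le 1/y'$ and $|x-x'|^2+(y')^2\le C$; either device leads to the same convergence condition and the same final estimate.
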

\begin{proof}
It suffices to do the estimate on the individual local coordinate expressions in (\ref{F.E0}).  
Suppose that $\chi \in \cinf_0(\bbH^{n+1})$, $\varphi \in \cinf_0(\bbR^n)$ and $\gamma \in \cinf(\bbR^n)$, 
$\gamma >0$.  Using the Hilbert-Schmidt norm for operators $L^2(\bbR^{n}) \to L^2(\bbH^{n+1})$,
we have
$$
\norm{\chi y^{N} E_0(s) \gamma^s \varphi}_2^2 :=  |c(s)|^2 \int_{\bbR^n} \int_{\bbH^{n+1}}  
\frac{y^{2\re s + 2N}}{(|x-x'|^2+{y}^2)^{2\re s}} \chi(z)^2 \gamma(x')^{2 \re s}
\varphi(x')^2 \>\frac{dx\>dy}{{y}^{n+1}} \>dx',
$$
After introducing polar coordinates $r = \sqrt{|x-x'|^2+{y}^2}$ and $\omega = (x-x',y)/r$, we can bound
the integral by 
$$
C \Bigl(\sup_{\supp \phi} |\gamma|\Bigr)^{|2\re s|}
 \int_0^{c} \int_{S^n_+} r^{-2\re s + 2N-1} (\omega_{n+1})^{2\re s+2N - n-1}\>dr\>d\omega,
$$
where $c$ is determined by the supports of $\chi$ and $\varphi$.  
For $|s| < N/C$, assuming $C >2$ and $N>n$, we have $2N - |2\re s| > n$, so this expression is easily
bounded by $e^{CN}$.

To finish the proof, we use Stirling's formula and
$\Gamma(s) = \pi/(\Gamma(1-s) \sin \pi s)$ to produce a bound 
$$
|c(s)| \le C\brak{s}^{n/2-1},
$$
valid for $d(s, -\bbN_0) > \eta$.
\end{proof}

The operator $G_N(s)$ appearing in (\ref{sa.gbrb}) differs from $F(s)\tsp$ by a 
smoothing term whose kernel is given in local coordinates by
\begin{equation}\label{g.f}
\sum_{p=0}^{N-1} \sum_j \psi_1^j u^{s/2+p+1} b_{0,p}(s) \prod_{l=1}^{p} Q(\tfrac{s}2+l) 
[\Delta_x, \varphi_1^j] q(w,w')^{-s} \varphi^j.
\end{equation}
By \cite[Lemma~4.1]{GZ:1995b} the sup norm of this smooth kernel is bounded
by $e^{-N/C}$ for $|s| < N/C$, $d(s, -\bbN_0) > \eta$.  Combining this estimate with
Lemma~\ref{F.est} gives the following:
\begin{lemma}\label{G.est}
Given $\eta>0$, there exists a constant $C$ independent of $N$ such that,
using the operator norm for maps $L^2(X, dg) \to L^2(\bX, dh)$, we have
$$
\norm{G_N(s)\tsp \rho^N} \le e^{CN}.
$$
for $N$ sufficiently large, $|s| < N/C$, and $d(s,-\bbN_0)>\eta$.
\end{lemma}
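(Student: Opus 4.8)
The plan is to split $G_N(s)\tsp$ using the decomposition recorded just before the statement. Write $T(s):=G_N(s)-F(s)\tsp$ for the smoothing operator whose kernel in local coordinates is (\ref{g.f}); then $G_N(s)\tsp=F(s)+T(s)\tsp$, so
$$
\norm{G_N(s)\tsp\rho^N}\le\norm{F(s)\rho^N}+\norm{T(s)\tsp\rho^N}.
$$
For the first term, the operator norm is dominated by the Hilbert--Schmidt norm, and Lemma~\ref{F.est} gives $\norm{F(s)\rho^N}\le\norm{F(s)\rho^N}_2\le e^{CN}$ on the stated parameter range.

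For the second term I would again bound the operator norm by the Hilbert--Schmidt norm for $L^2(X,dg)\to L^2(\bX,dh)$. The kernel (\ref{g.f}) is supported in $\bigcup_j(Y_j\times U_j)$, a relatively compact set, and by \cite[Lemma~4.1]{GZ:1995b} its sup norm there is at most $e^{-N/C}$ for $|s|<N/C$, $d(s,-\bbN_0)>\eta$, and $N$ large. The one point that is not purely formal is that in each chart $Y_j$ the density $dg$ behaves like $y^{-n-1}\,dy\,dx$ and so is singular at $y=0$; the factor $\rho^N\sim y^N$ compensates this exactly, since $\int_0^\delta y^{2N}\,y^{-n-1}\,dy$ converges and stays bounded once $N$ is large ($\delta$ being the fixed small parameter of the construction). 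Carrying this through gives $\norm{T(s)\tsp\rho^N}_2\le Ce^{-N/C}$, which is certainly $\le e^{CN}$. Combining the two estimates and enlarging $C$ yields the claim.

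There is no substantial obstacle here: the lemma is essentially a corollary of Lemma~\ref{F.est} together with the cited Guillop\'e--Zworski bound on the kernel (\ref{g.f}). The only care needed is the bookkeeping in the last paragraph, namely ensuring that the $\rho^N$ weight sits on the correct side to absorb the boundary singularity of $dg$, which is what makes the hypothesis "$N$ sufficiently large" necessary. If one preferred to avoid the Hilbert--Schmidt estimate for the $T(s)\tsp$ piece, a direct Schur-test bound gives the same conclusion with essentially the same computation.
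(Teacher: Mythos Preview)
Your proposal is correct and matches the paper's own argument: the paper likewise reduces the lemma to Lemma~\ref{F.est} for the $F(s)$ piece together with the \cite[Lemma~4.1]{GZ:1995b} sup-norm bound on the kernel (\ref{g.f}) for the smoothing remainder. Your extra remark about the $\rho^N$ weight absorbing the boundary singularity of $dg$ is a sensible elaboration of a point the paper leaves implicit.
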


\bigbreak
The final estimate is to use results of \cite{GZ:1995b} to control $(I - K_N(s))^{-1}$.
Let $U_m$ denote the set of $m$-th roots of unity, and define the canonical product
\begin{equation}\label{gn.def}
g_n(s) := s \prod_{k=1}^{\infty} \prod_{\omega\in U_{2(n+1)}} E\Bigl(-\frac{\omega s}{k}, n+1\Bigr)^{k^{n}},
\end{equation}
using the elementary factor,
\begin{equation}\label{elemfact}
E(z, p) := (1-z)\exp\Bigl(z+\frac{z^2}2+\dots+\frac{z^p}p\Bigr).
\end{equation}
The inclusion of the roots of unity guarantees, by
Lindel\"of's theorem \cite[Thm.~2.10.1]{Boas}, that $g_n(s)$ is of finite type, so that
\begin{equation}\label{gs.est}
|g_n(s)| \le e^{C|s|^{n+1}}.
\end{equation}

\begin{lemma}\label{IK.est}
For each $N$ there exists $a_N$ contained in some fixed interval $[a,b]$, such that
$$
\left\Vert (I-K_N(s))^{-1} \right\Vert \le e^{CN^{n+2+\vep}} \quad\text{for }|s| = a_N N,
$$
where the norm is the operator norm on $\rho^{N} L^2(X, dg)$, 
\end{lemma}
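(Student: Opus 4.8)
The plan is to turn the resolvent bound into a lower bound for the determinant $D_N$. Starting from the factorization $I-K_N(s)^{n+1}=(I-K_N(s))\sum_{m=0}^{n}K_N(s)^m$, one writes $(I-K_N(s))^{-1}=\bigl(\sum_{m=0}^{n}K_N(s)^m\bigr)(I-K_N(s)^{n+1})^{-1}$, so that by (\ref{KN.bound})
$$
\norm{(I-K_N(s))^{-1}}\le e^{CN}\,\norm{(I-K_N(s)^{n+1})^{-1}}.
$$
The standard trace-class resolvent estimate $\norm{(I-A)^{-1}}\le \det(I+|A|)/|\det(I-A)|$, applied with $A=K_N(s)^{n+1}$ and combined with (\ref{DN.bound}), gives $\norm{(I-K_N(s)^{n+1})^{-1}}\le e^{CN^{n+2}}/|D_N(s)|$. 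Hence it suffices to exhibit $a_N$ in a fixed interval $[a,b]\subset(0,1/C)$ — with $a_NN$ a half-integer, so that the whole circle $|s|=a_NN$ lies inside the domain of Proposition~\ref{GZprop} and at distance $\ge\tfrac12$ from $-\bbN_0$ — on which $|D_N(s)|\ge e^{-CN^{n+2+\vep}}$.

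To produce such a circle I would first remove the poles of $D_N$. On the half-plane $\re s>-N+\nh$ the function $D_N$ is meromorphic with poles only at the negative integers, and inspection of the residue $\res_{-j}K_N(s)$ in (\ref{kn.formula}) shows that it is a boundary operator which is polynomial of degree $O(j)$ in the tangential variables, hence of rank $O(j^n)$; so $D_N$ has a pole of order $O(j^n)$ at $-j$. Fixing an integer $P$ with $Pk^n$ exceeding these orders for every $k$, the function $\tilde D_N(s):=g_n(s)^{P}D_N(s)$ is holomorphic on $\re s>-N+\nh$. Granting a reference lower bound $|\tilde D_N(s_0)|\ge e^{-CN^{n+2+\vep}}$ at a suitable point $s_0$, a minimum-modulus argument (see \cite{Boas}) — bound the number of zeros of $\tilde D_N$ in a disk by $O(N^{n+2+\vep})$ via Jensen's inequality, get $\bigl|\prod_k(s-a_k)\bigr|\ge e^{-CN^{n+2+\vep}}$ outside exceptional disks of total radius at most a fixed constant via Cartan's lemma, and control the zero-free quotient by a Harnack/Borel--Carath\'eodory bound — yields a half-integral radius $a_NN$, with $a_N$ in a fixed interval and $|s|=a_NN$ disjoint from the exceptional disks, on which $|\tilde D_N(s)|\ge e^{-CN^{n+2+\vep}}$. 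Dividing by $|g_n(s)|^{P}\le e^{C|s|^{n+1}}\le e^{CN^{n+1}}$, using (\ref{gs.est}), gives $|D_N(s)|=|\tilde D_N(s)|/|g_n(s)|^{P}\ge e^{-CN^{n+2+\vep}}$ there, as required.

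I expect the main obstacle to be the reference normalization $|\tilde D_N(s_0)|\ge e^{-CN^{n+2+\vep}}$. It cannot come from a Neumann series at a point of the disk $|s|<N/C$, where $\norm{K_N(s)}$ is already of size $e^{CN}$; the natural remedy is to take $\re s_0$ very large — of order $N\log N$, and to use this same $s_0$ as the fixed reference parameter of the parametrix in \S\ref{param.sec} — so that the exponential smallness in $\re s_0$ of the factors $u^{s_0/2+\cdots}$ and $(uu')^{s_0/2}q^{-s_0}$ in (\ref{kn.formula}), together with the polynomial decay of $R_g(s_0)$, overpowers the $N$ terms of the $p$-sum and the quasi-analytic derivative losses, giving $\norm{K_N(s_0)}\le\tfrac12$. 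One then gets $|D_N(s_0)|\ge e^{-\norm{(I-K_N(s_0)^{n+1})^{-1}}\,\norm{K_N(s_0)^{n+1}}_1}\ge e^{-CN^{n+2+\vep}}$, using $\norm{(I-K_N(s_0)^{n+1})^{-1}}\le 4$ and a re-run of the estimates of Proposition~\ref{GZprop} over the enlarged disk $|s|\le C'N\log N$; it is the need to carry out the minimum-modulus argument on this $N\log N$-scale disk — where the determinant is only bounded by $e^{C(N\log N)^{n+2}}$ — that costs the loss from $N^{n+2}$ to $N^{n+2+\vep}$. One must also check that placing $s_0$ this far out does not spoil the constants of Proposition~\ref{GZprop} (it does not, since the $s_0$-dependent terms of $K_N(s)$ remain $O(1)$), and arrange the geometry so that the target circle about the origin lies in the region where the minimum-modulus estimate applies. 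The rank count $\rank\res_{-j}K_N(s)=O(j^n)$ needs some bookkeeping with the explicit parametrix but is otherwise routine, as are the remaining Stirling-type and trace-ideal estimates.
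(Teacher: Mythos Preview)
Your proposal is correct and follows essentially the same route as the paper: factor through $(I-K_N^{n+1})^{-1}$, bound its norm by the ratio $\det(I+|K_N|^{n+1})/|D_N(s)|$, clear the poles of $D_N$ at $-\bbN_0$ by multiplying by a power of a canonical product, and invoke the minimum modulus theorem to find a good circle. The paper's proof is shorter only because it cites \cite[Lemma~5.3]{GZ:1995b} directly for the holomorphy and the upper bound $|g_{n+1}(s)^p D_N(s)|\le e^{CN^{n+2}}$ on $|s|<N/C$, and then applies \cite[Thm.~I.11]{Levin} without spelling out the reference normalization you worry about---that step is absorbed into the Guillop\'e--Zworski citation, whereas your $s_0\sim N\log N$ device is one explicit way to supply it and does account for the $\vep$.
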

\begin{proof}
First we expand
$$
(I-K_N(s))^{-1} = (I+K_N(s) + \dots + K_N(s)^n) (I - K_N(s)^{n+1})^{-1}.
$$
The first factor on the right satisfies a bound
$$
\norm{I+K_N(s) + \dots + K_N(s)^n} \le e^{CN},
$$
for $|s| < N/C$, $d(s,-\bbN)>\eta$, by (\ref{KN.bound}).
By \cite[Thm.~5.1]{GK:1969}, we can estimate the second factor by a ratio
of determinants,
$$
\left\Vert (I-K_N(s)^{n+1})^{-1} \right\Vert \le \frac{\det (I + |K_N(s)|^{n+1})}{D_N(s)}.
$$
Estimation of the numerator is already taken care of by (\ref{DN.bound}).  

By \cite[Lemma~5.3]{GZ:1995b}, for some fixed $p$ (independent of $N$) the function
$$
h_N(s) := g_{n+1}(s)^p D_N(s)
$$ 
is holomorphic for $\re s > -N + \nh$ and satisfies
$$
|h_N(s)| \le e ^{CN^{n+2}},
$$
for $|s| < N/C$.  

By the minimum modulus theorem (see e.g.~\cite[Thm.~I.11]{Levin}), there exists $a_N \in 
[\tfrac{1}{6C}, \tfrac{1}{2eC}]$ such that 
$$
|h_N(s)| \ge e^{-CN^{n+2}}, \quad\text{for }|s| = a_N N.
$$
By (\ref{gn.def}) the same estimate applies to $D_N(s)$.  
\end{proof}

\section{Optimal upper bound}\label{optub.sec}

Our first application of the estimates from \S\ref{growth.sec} will be to complete
the proof of Theorem~\ref{upbound.thm}.  Since the result of Cuevas-Vodev \cite[Prop.~1.2]{CV:2003}
covers a sector away from the negative real axis, as illustrated
in Figure~\ref{rescount}, it suffices to prove the following:
\begin{proposition}\label{left.upbound}
For $X, g$ conformally compact and hyperbolic near infinity and $\vep>0$,
$$
\#\Bigl\{\zeta\in \Rsc_g:\>|\zeta|\le r, \> \arg(\zeta) \in [\tfrac{\pi}2 + \vep,\tfrac{3\pi}2 - \vep]  \Bigr\} = O(r^{n+1}).
$$
\end{proposition}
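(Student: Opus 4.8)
The plan is to bound the number of zeros and poles of the renormalized scattering determinant $\vartheta_g(s)$ in the relevant region by way of a Jensen-type argument, using the formula \eqref{sa.fkb} together with the growth estimates from \S\ref{growth.sec}. Recall from \eqref{nu.mumu} that, away from the finitely many points where $\zeta(n-\zeta)\in\sigma_{\rm d}(\Delta_g)$, the multiplicities in $\Rsc_g$ at a point $\zeta$ with $\re\zeta<\nh$ agree with $\nu_g(\zeta)$, the order of $\vartheta_g$ there; so it suffices to count zeros and poles of $\vartheta_g(s)$ in $\{|\zeta|\le r,\ \arg\zeta\in[\tfrac\pi2+\vep,\tfrac{3\pi}2-\vep]\}$, up to an error $O(1)$. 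The strategy for this is the classical one: produce an upper bound $|\vartheta_g(s)|\le e^{C|s|^{n+1}}$ on a family of circles $|s|=a_NN$ staying in (or near) the left half-plane, and a matching lower bound $|\vartheta_g(s)|\ge e^{-C|s|^{n+1}}$ at suitably many comparison points; then the number of zeros/poles inside $|s|=a_NN$ is $O(N^{n+1})$ by the argument principle together with Jensen's formula, and letting $N\to\infty$ (with $a_N$ ranging over a fixed interval $[a,b]$, as furnished by Lemma~\ref{IK.est}) gives the claim for all $r$.

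The upper bound comes from writing, via \eqref{sa.fkb},
$$
\vartheta_g(s) = \det\Bigl(I + F(n-s)\,(I-K_N(n-s))^{-1} B_N(s)\Bigr),
$$
and estimating this Fredholm determinant through the singular values of the smoothing term. We have $\det(I+T)\le\prod_k(1+\mu_k(T))$, and $\mu_k(T)\le \|F(n-s)\rho^N\|_2\,\|(I-K_N(n-s))^{-1}\|\,\mu_k(\rho^{-N}B_N(s))$ by the standard multiplicativity of singular values. Here Lemma~\ref{F.est} gives $\|F(n-s)\rho^N\|_2\le e^{CN}$, Lemma~\ref{IK.est} gives $\|(I-K_N(n-s))^{-1}\|\le e^{CN^{n+2+\vep}}$ on the circles $|s|=a_NN$, and Lemma~\ref{B.est} gives $\mu_k(\rho^{-N}B_N(s))\le e^{CN-ck^{1/n}}$. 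Taking the product over $k$: the factors with $e^{CN-ck^{1/n}}\ge 1$ are those with $k\lesssim N^n$, each contributing at most $e^{CN^{n+2+\vep}}$ to the logarithm, while the tail $\sum_{k\gtrsim N^n} e^{CN-ck^{1/n}}$ is bounded by a constant; hence $\log|\vartheta_g(s)|\le C N^{n+2+\vep}$ for $|s|=a_NN$. Since $N\sim|s|/a_N$ with $a_N$ bounded away from $0$, this reads $|\vartheta_g(s)|\le e^{C|s|^{n+2+\vep}}$ — which is \emph{too weak} by a power; the refinement needed to reach $|s|^{n+1}$ is to absorb the $(I-K_N)^{-1}$ factor more carefully. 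For this one uses, exactly as in Lemma~\ref{IK.est}, that $g_{n+1}(s)^p D_N(s)$ is holomorphic of order $n+1$ and of size $e^{CN^{n+2}}$, but combined with the fact that the \emph{smoothing} term $B_N(s)$ already carries a factor $\rho^{s+2N+2}$, so the relevant quantity is a ratio in which the $D_N$ denominator and the $\det(I+|K_N|^{n+1})$-type numerator partially cancel; following the bookkeeping of \cite[Lemma~5.2, Prop.~4.1]{GZ:1995b} this yields $|\vartheta_g(s)|\le e^{C|s|^{n+1}}$ on the circles $|s|=a_NN$, as needed. The lower bound at comparison points follows the same route: by \eqref{sa.gbrb} and the minimum-modulus theorem applied to $h_N(s)=g_{n+1}(s)^pD_N(s)$, on a subset of the circle $|s|=a_NN$ of controlled measure one has $|\vartheta_g(s)|\ge e^{-C|s|^{n+1}}$.

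With the two-sided bound in hand, write $\vartheta_g(s)=e^{-2\pi i\,\sigma(s)}$ formally or, more precisely, apply Jensen's formula to the holomorphic function $g_{n+1}(n-s)^p\,\vartheta_g(s)$ (which clears the poles of $\vartheta_g$ coming from resonances, since those are contained in the zero set of the canonical product $g_{n+1}$, cf. \eqref{gs.est}) on the disk $|s-\nh|\le a_NN$. The number of its zeros in that disk is at most
$$
\frac1{\log 2}\Bigl(\log\max_{|s-\nh|=a_NN}|g_{n+1}(n-s)^p\vartheta_g(s)| - \log|g_{n+1}(n-\nh)^p\vartheta_g(\nh)|\Bigr)\le C N^{n+1},
$$
using \eqref{gs.est} and the upper bound on $\vartheta_g$ for the max, and the lower bound plus a fixed nonvanishing value at (or near) the center for the normalization. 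Zeros of this product in $\{\arg\zeta\in[\tfrac\pi2+\vep,\tfrac{3\pi}2-\vep]\}$ are, up to $O(1)$, the poles of $R_g$ together with the $d_k$-points in that region — i.e.\ exactly the points of $\Rsc_g$ being counted — plus possibly extra zeros of $g_{n+1}$; but the zeros of $g_{n+1}(n-s)$ all lie on the real axis at $s\in n+\bbN_0$, i.e.\ outside the open sector $\arg\zeta\in(\tfrac\pi2,\tfrac{3\pi}2)$, so they do not interfere. Letting $N\to\infty$ and using that $a_NN$ exhausts $[a,\infty)$ gives $\Nsc(r)\cap\{\arg\zeta\in[\tfrac\pi2+\vep,\tfrac{3\pi}2-\vep]\}=O(r^{n+1})$.

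The main obstacle is the sharpening from the naive $|s|^{n+2+\vep}$ bound to the optimal $|s|^{n+1}$: the factor $\|(I-K_N)^{-1}\|\le e^{CN^{n+2+\vep}}$ is genuinely that large in operator norm, and one cannot afford to multiply it by the $\sim N^n$ nontrivial singular values of $B_N$. The resolution is that $\vartheta_g(s)$ is a \emph{Fredholm determinant} of a smoothing perturbation, not an operator norm, and one must track the cancellation between $D_N(s)$ in the denominator of the resolvent bound and the determinant appearing when one estimates $\det(I+T)$ — essentially the observation, already implicit in \cite{GZ:1995b}, that $\vartheta_g(s)$ differs from a fixed ratio of the $D_N$'s by something of order $e^{C|s|^{n+1}}$. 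Getting this bookkeeping exactly right, uniformly in $N$ and on the correct family of circles $|s|=a_NN$ avoiding $-\bbN_0$, is where the real work lies; everything else is the standard Jensen/minimum-modulus machinery.
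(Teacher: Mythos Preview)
There is a genuine gap. Your approach via \eqref{sa.fkb} forces you to swallow the factor $\|(I-K_N(n-s))^{-1}\|\le e^{CN^{n+2+\vep}}$, and the ``cancellation'' you appeal to between $D_N$ and $\det(I+T)$ is wishful thinking: there is no such mechanism in \cite{GZ:1995b} (which only obtains $O(r^{n+2})$), and in fact the paper itself, when it does use \eqref{sa.fkb} in Lemma~\ref{det.growth}, gets exactly the weak exponent $(n+1)(n+2)$ you found. That lemma is used only for the crude global statement of Proposition~\ref{detSA.prop}, not for the sharp count.

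The key idea you are missing is to switch to the \emph{other} representation \eqref{sa.gbrb},
\[
S_g(n-s)A(s) = I + \bigl(G_N(n-s)\tsp + B_N(n-s)\tsp R_g(n-s)\bigr) B_N(s),
\]
and to exploit the elementary fact that for $\re s \le 0$ the argument $n-s$ lies in the physical region $\re(n-s)\ge n$, where the \emph{actual} resolvent satisfies the trivial bound $\|\rho^N R_g(n-s)\rho^N\|=O(1)$. No $(I-K_N)^{-1}$ ever appears. Combining this with Lemmas~\ref{B.est} and \ref{G.est} gives $\|T(s)\|\le e^{CN}$ and $\mu_k(T(s))\le e^{CN-ck^{1/n}}$ directly, and then the Weyl estimate yields $|\vartheta_g(s)|\le e^{C\brak{s}^{n+1}}$ throughout $\re s\le 0$ (Lemma~\ref{vt.hlf.lemma}). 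With this half-plane bound in hand, one multiplies by $g_n(s)^p$ to clear the poles of $\tA(s)$ and applies Carleman's theorem in the half-plane $\re s\le 0$; Jensen on circles $|s|=a_NN$ is not needed, and neither is any lower bound on $\vartheta_g$. (Incidentally, your claim that the zeros of $g_{n+1}(n-s)$ lie on the real axis is false: by \eqref{gn.def} they sit on $2(n+1)$ rays through the origin. This is harmless for the count since their total number up to radius $r$ is still $O(r^{n+1})$, but it shows the Jensen step was not thought through.)
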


\begin{figure}[ht]
\psfrag{n}{$\nh$}
\begin{center}  
\includegraphics{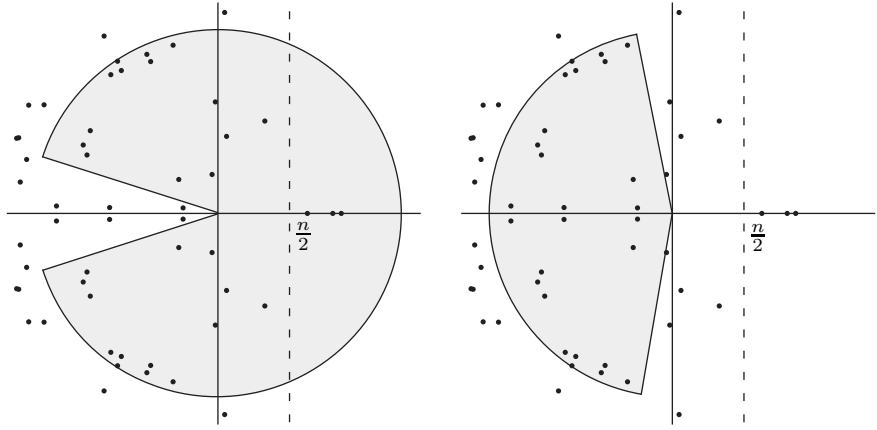} 
\end{center}
\caption{Resonance counting regions for the Cuevas-Vodev result (left) and 
for Proposition~\ref{left.upbound} (right).} \label{rescount}
\end{figure}

The key ingredients of the proof are Weyl's estimate for the Fredholm determinant and 
Carleman's theorem from complex analysis.  If the operator $T$ is trace class on some 
Hilbert space, then Weyl's estimate (see e.g.~\cite{GK:1969}) is
\begin{equation}\label{weyl.est}
|\det (I+T)| \le \prod_{k=1}^\infty (1 + \mu_k(T)).
\end{equation}
To apply this estimate we typically break the product at some value $k=M$ to obtain
\begin{equation}\label{weyl.estM}
|\det(I+T)| \le \norm{T}^{M} \exp\biggl( \sum_{k = M+1}^\infty \mu_k(T)\biggr).
\end{equation}

The Weyl estimate allows us to apply the estimates from \S\ref{growth.sec}
to control the renormalized scattering
determinant $\vartheta_g(s)$ defined in (\ref{vartheta.def}).
\begin{lemma}\label{vt.hlf.lemma}
For $s$ in the half-plane $\re s \le 0$, we have
$$
|\vartheta_g(s)| \le e^{C\brak{s}^{n+1}},
$$
provided $d(s,-\bbN_0)>\eta$.
\end{lemma}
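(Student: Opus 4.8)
The plan is to bound $\vartheta_g(s) = \det S_g(n-s)A(s)$ in the half-plane $\re s \le 0$ by applying the Weyl estimate \eqref{weyl.estM} to the representation \eqref{sa.gbrb},
$$
S_g(n-s)A(s) = I + T(s), \qquad T(s) := \bigl(G_N(n-s)\tsp + B_N(n-s)\tsp R_g(n-s)\bigr) B_N(s),
$$
choosing $N$ depending on $s$ so that the estimates of \S\ref{growth.sec} are available. Concretely, given $s$ with $\re s\le 0$ and $d(s,-\bbN_0)>\eta$, I would fix $N \asymp \brak{s}$ large enough that $|s| < N/C$ and $|n-s| < N/C$ hold, and also arrange $|n-s| = a_N N$ for the value $a_N\in[a,b]$ supplied by Lemma~\ref{IK.est} (the freedom to dilate $N$ by a bounded factor makes this compatible with the other constraints). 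Since $\re s \le 0$ means $\re(n-s) \ge n \ge -N+\nh$, the resolvent identity \eqref{r.mik} and all the growth lemmas apply at the point $n-s$.

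The key step is to assemble operator-norm and singular-value bounds for $T(s)$ from the lemmas. Writing $T(s) = \bigl(G_N(n-s)\tsp\rho^N + B_N(n-s)\tsp\rho^{-N}\cdot \rho^N R_g(n-s)\rho^N \cdot \rho^{-N}\bigr)\rho^{-N}B_N(s)$ — inserting powers of $\rho$ so each factor is a bounded operator between the $L^2$ spaces in the lemmas — I would use Lemma~\ref{G.est} to bound $\norm{G_N(n-s)\tsp\rho^N}\le e^{CN}$, Lemma~\ref{B.est} for $\norm{\rho^{-N}B_N(s)}\le e^{CN}$ and its transpose, and the bound $\norm{\rho^N R_g(n-s)\rho^N} = \norm{\rho^N M_N(n-s)(I-K_N(n-s))^{-1}\rho^N}$, which by \eqref{r.mik}, the (standard, already-used) bound on $\rho^N M_N \rho^N$, and Lemma~\ref{IK.est} is at most $e^{CN^{n+2+\vep}}$. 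Hence $\norm{T(s)} \le e^{CN^{n+2+\vep}}$. For the singular values, I would push the decay of $\mu_k(\rho^{-N}B_N(s)) \le e^{CN - ck^{1/n}}$ from Lemma~\ref{B.est} through the product: $\mu_k(T(s)) \le \norm{\cdot}\cdot\mu_k(\rho^{-N}B_N(s)) \le e^{CN^{n+2+\vep} - ck^{1/n}}$.

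Finally I would feed these two bounds into \eqref{weyl.estM}. Cutting the product at $M$ comparable to $N^{n(n+2+\vep)}$ (so that $cM^{1/n}$ dominates $CN^{n+2+\vep}$ and the tail sum $\sum_{k>M}\mu_k(T(s))$ is $O(1)$), we get
$$
|\vartheta_g(s)| \le \norm{T(s)}^{M}\,e^{O(1)} \le e^{C M N^{n+2+\vep}} \le e^{C N^{n(n+2+\vep)+(n+2+\vep)}}.
$$
This is polynomial in $N \asymp \brak{s}$, but of much too high a degree; a naive choice of $M$ therefore does not give the sharp exponent $n+1$. The main obstacle — and the place where the argument must be done carefully rather than crudely — is to obtain the \emph{optimal} power $\brak{s}^{n+1}$. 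The remedy, following Guillop\'e-Zworski, is not to absorb $\norm{T(s)}^M$ bluntly but to estimate $\prod_{k\le M}(1+\mu_k(T(s)))$ using the faster-than-any-power decay of the \emph{individual} singular values $\mu_k(\rho^{-N}B_N(s))$ rather than the single crude norm: indeed $\sum_k \log\bigl(1+\mu_k(\rho^{-N}B_N(s))\bigr) \le \sum_k e^{CN - ck^{1/n}} \le e^{CN}\cdot (CN)^n \le e^{CN}$, so the Hilbert-Schmidt-type contribution of $B_N$ is already $e^{CN}$. The genuinely large factor $e^{CN^{n+2+\vep}}$ from $(I-K_N)^{-1}$ multiplies only a \emph{bounded} number of singular values of $T(s)$ — those up to the rank scale at which $\mu_k(\rho^{-N}B_N(s))$ drops below, say, $e^{-CN^{n+2+\vep}}$, namely $k \le C'N^{n(n+2+\vep)}$ — which is where my naive count went wrong. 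Instead one should observe that $\vartheta_g(s)$ is, up to the $e^{CN}$ Fredholm factor, controlled by the \emph{product} of the few large eigenvalues of $\rho^N R_g(n-s)\rho^N$ that survive after composition with the rapidly-decaying $B_N$, and these are in turn governed by $D_N(n-s)^{-1}$ via Lemma~\ref{IK.est}, whose minimum-modulus bound $e^{-CN^{n+2}}$ is what one actually wants — but pinned down only on the circle $|n-s| = a_N N$, which is exactly why $N$ was chosen so that $s$ lands on that circle. Tracking which norm multiplies which block of singular values, and using the canonical product $g_n$ to control $D_N$ globally in $s$ rather than only on one circle, is the heart of the matter and yields the stated $e^{C\brak{s}^{n+1}}$.
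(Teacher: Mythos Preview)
Your setup via \eqref{sa.gbrb} is exactly right, but you miss the one observation that makes this representation useful in the half-plane $\re s\le 0$: there the point $n-s$ lies in the \emph{physical} region $\re(n-s)\ge n>\nh$, where the resolvent $R_g(n-s)=(\Delta_g-(n-s)s)^{-1}$ is bounded on $L^2(X,dg)$ by the spectral theorem, uniformly in $s$. Concretely, $\|\rho^N R_g(n-s)\rho^N\|=O(1)$ for $\re s\le 0$. You never need the parametrix or Lemma~\ref{IK.est} to estimate this factor, and invoking them is what produces the disastrous $e^{CN^{n+2+\vep}}$ in your norm bound.

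Once you use the elementary resolvent bound, Lemmas~\ref{B.est} and \ref{G.est} give $\bigl\|\bigl(G_N(n-s)\tsp+B_N(n-s)\tsp R_g(n-s)\bigr)\rho^N\bigr\|\le e^{CN}$, hence $\|T(s)\|\le e^{CN}$ and $\mu_k(T(s))\le e^{CN-ck^{1/n}}$. Then \eqref{weyl.estM} broken at $M=(CN/c)^n$ yields $|\vartheta_g(s)|\le e^{CN^{n+1}}$ directly, and taking $N\asymp\brak{s}$ finishes. Your final paragraph, which tries to recover the sharp exponent by separating ``a bounded number of large singular values'' from the rest and appealing to $D_N(n-s)^{-1}$, does not work as written: the cutoff index you identify is of order $N^{n(n+2+\vep)}$, not bounded, and no mechanism you describe reduces the exponent to $n+1$. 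The whole difficulty you are fighting is self-inflicted; the paper's proof avoids it entirely by noticing that the resolvent factor is harmless here. (The formula \eqref{sa.fkb} with the $(I-K_N)^{-1}$ factor is what the paper uses for the \emph{global} estimate in Lemma~\ref{det.growth}, where no better bound on the resolvent is available and one must accept the higher order.)
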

\begin{proof}
Define the smoothing operator on $L^2(\bX, dh)$,
$$
T(s) :=  S_g(n-s)A(s) - I,
$$
so that $\vartheta_g(s) = \det ( I+T(s))$.  By (\ref{sa.gbrb}) we can write
$$
T(s) = \Bigl(G_N(n-s)\tsp + B_N(n-s)\tsp R_g(n-s)\Bigr) B_N(s),
$$
for $|\re s-\nh| < N$.  For $\re s \ge n$, the standard
resolvent estimate gives $\norm{R_g(s)} \le 1/\inf(\sigma(\Delta_g))$.  
Thus $\norm{\rho^N R_g(n-s) \rho^N} = O(1)$ for $\re s\le 0$.
Using this along with  Lemmas~\ref{B.est} and \ref{G.est}, we deduce that
$$
\bnorm{\Bigl(G_N(n-s)\tsp + B_N(n-s)\tsp R_g(n-s)\Bigr) \rho^N} \le e^{CN},
$$
for $|s| < N/C$, $\re s \le 0$, and $d(s,-\bbN_0)>\eta$.

This allows us to deduce from Lemma~\ref{B.est} that
$$
\norm{T(s)} \le e^{CN},
$$
and 
$$
\mu_k(T(s)) \le e^{CN-ck^{1/n}}
$$
for $|s| < N/C$, $\re s \le 0$, and $d(s,-\bbN_0)>\eta$.
The Weyl estimate (\ref{weyl.estM}), broken at $M = (CN/c)^n$, then gives
$$
|\vartheta_g(s)| \le e^{CN^{n+1}},
$$
for $|s| < N/C$, $\re s \le 0$, and $d(s,-\bbN_0)>\eta$.  Since $|\vartheta_g(s)|$ is
independent of $N$, we can replace $N$ by $\brak{s}$ in the exponent by adjusting the
constant.
\end{proof}

Let us define a renormalized version of $A(s)$ analogous to (\ref{tS.def}),
$$
\tA(s) := \frac{\Gamma(s-\nh)}{\Gamma(\nh - s)}
\Lambda^{n/2-s} A(s) \Lambda^{n/2-s}.
$$ 
It follows from 
(\ref{S.AEB}) that $\tA(s)$ is also a meromorphic family of Fredholm operators
with poles of finite rank.  Since the factors $\Lambda^{n/2-s}$ are holomorphically
invertible, we can write
$$
\vartheta_g(s) = \det \tS_g(n-s) \tA(s)
$$

To compute the divisor of $\vartheta_g(s)$, we can apply 
\cite[Thm~5.2]{GS:1971} to obtain 
\[
\begin{split}
\res_\zeta \frac{\vartheta_g'}{\vartheta_g}(s) 
& =  \tr \res_\zeta \Bigl[(\tS_g(n-s) \tA(s))' \> (\tS_g(n-s) \tA(s))^{-1} \Bigr] \\
& =  - \tr \res_\zeta \bigl[\tS_g'(n-s) \tS_g(n-s)^{-1} \bigr] 
+ \tr  \res_\zeta \bigl[ \tA'(s) \tA(s)^{-1} \bigr].
\end{split}
\]
Note that the residues are finite-rank operators, so the traces here are well-defined.
To derive the second line we commuted the operators inside the second trace;  this is 
justified by a simple argument from \cite[\S4.1]{GS:1971}.
Since $\tS_g(n-s) = \tS_g(s)^{-1}$, the first term is a scattering multiplicity, 
\begin{equation}\label{res.vtheta}
\res_\zeta \frac{\vartheta_g'}{\vartheta_g}(s) = - \nu_g(\zeta) + \tr  \res_\zeta \bigl[ \tA'(s) \tA(s)^{-1} \bigr].
\end{equation}

The poles of $\tA(s)$ come from the model scattering matrix $\tS_0(s)$ on $\bbH^{n+1}$.
The operator $\tA(s)$ is the sum of a finite number of locally defined operators of the form
$$
\Lambda^{n/2-s}
\varphi_1^j \gamma_j^s\> \iota_j^*\!\left(\frac{\Gamma(s-\nh)}{\Gamma(\nh - s)} S_0(s)\right) 
\gamma_j^s \varphi^j \Lambda^{n/2-s}.
$$
By (\ref{S0.formula}), this expression has poles of order $h_n(k)$ at
$s = -k$ for $k \in \bbN_0$ (and corresponding zeroes at $s = n+k$).  We conclude 
that the only poles of $\tA(s)$ occur at $s=-k$ for $k\in \bbN_0$, with multiplicities bounded by $Ck^n$.  

\bigbreak
\begin{proof}[Proof of Proposition~\ref{left.upbound}]
For sufficiently large $p \in \bbN$, the function $h(s) := g_n(s)^p \vartheta_g(s)$ will be analytic
in the half plane $\re s \le \nh$, where $g_n(s)$ was the function introduced in (\ref{gn.def}).  
The resonances in $\Rsc_g \cap \{\re s \le 0\}$
are included among the zeros of $h(s)$, with multiplicities.  Moreover, by Lemma~\ref{vt.hlf.lemma}
and (\ref{gs.est}) we have the growth estimate
\begin{equation}\label{hs.bnd}
|h(s)| \le e^{C\brak{s}^{n+1}},
\end{equation}
for $\re s \le 0$.  (Since $h(s)$ is analytic, we can drop the restriction $d(s,-\bbN_0)>\eta$
by the maximum principle.)

Choose $\delta>0$ so that $h(-\delta) \ne 0$, and for $\vep >0$ let
$$
n(r) := \#\{\zeta\in\Rsc:\>|\zeta+\delta|\le r,\> \arg(\zeta+\delta) \in
[\tfrac{\pi}2 + \vep,\tfrac{3\pi}2 - \vep] \}
$$
By Carleman's theorem \cite[Thm.~1.2.2]{Boas}, for some $c>0$
\[
\begin{split}
c n(r/2) & \le \frac{r}2 \im h'(-\delta) + 
\frac{1}{\pi} \int_{\pi/2}^{3\pi/2} \log \left| \frac{h(-\delta + re^{i\theta})}{h(-\delta)} 
\right | \sin\theta\>d\theta \\
&\quad + \frac{r}{2\pi} \int_0^r \left( \frac{1}{y^2} - \frac{1}{r^2}\right) 
\log \left| \frac{h(-\delta+ iy) h(-\delta-iy)}{h(-\delta)^2} \right| dy.
\end{split}
\]
The right-hand side is $O(r^{n+1})$ by (\ref{hs.bnd}), and this gives the stated estimate.
\end{proof}

\section{Global determinant estimates}\label{det.est.sec}

We now turn to the analysis of the properties of the scattering determinant $\vartheta_g(s)$ 
as a meromorphic function on all of $\bbC$.
The main result of this section is the following:
\begin{proposition}\label{detSA.prop}
The function $\vartheta_g(s)$
is a ratio of entire functions of order at most $n^2+3n+2$.
\end{proposition}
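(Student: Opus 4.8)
The plan is to show that $\vartheta_g(s)$, which we already know is a well-defined meromorphic function on $\bbC$ via \eqref{vartheta.def}, has a representation as a quotient of two entire functions each of finite order $n^2+3n+2$. Since a meromorphic function of finite order can be written as such a quotient with the order equal to the order of the function (the maximum of the orders of numerator and denominator in a canonical factorization), it suffices to control the growth of $\vartheta_g(s)$ in all of $\bbC$, together with the growth of a canonical product over its poles. The poles of $\vartheta_g(s)$ come from two sources: the resonances via the $\tS_g(n-s)$ factor (these contribute at the points $n-\zeta$ for $\zeta \in \calR_g$, which by the upper bound of Theorem~\ref{upbound.thm} have counting function $O(r^{n+1})$), and the poles of $\tA(s)$, which by the computation following Lemma~\ref{vt.hlf.lemma} occur only at $s=-k$, $k\in\bbN_0$, with multiplicity $O(k^n)$. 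The latter set has counting function $O(r^{n+1})$ as well, so the full pole divisor of $\vartheta_g(s)$ has counting function $O(r^{n+1})$, and a canonical product over it is of order at most $n+1$.

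The main work is the global growth estimate $|\vartheta_g(s)| \le e^{C\brak{s}^{M}}$ for a suitable $M$. In the half-plane $\re s \le 0$ we already have order $n+1$ from Lemma~\ref{vt.hlf.lemma}. For the right half-plane $\re s > 0$ we instead use the functional-equation structure: from $\vartheta_g(s) = \det \tS_g(n-s)\tA(s)$ and the identity $\tS_g(n-s) = \tS_g(s)^{-1}$ one relates $\vartheta_g(s)$ to $\vartheta_g(n-s)$ up to ratios of determinants of $\tA$, i.e.\ up to the scattering determinant of the model $\bbH^{n+1}$-pieces, whose growth is governed by the explicit formula \eqref{S0.formula} and Stirling's formula. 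Alternatively — and this is the route I expect to be cleanest — one writes $\vartheta_g(s) = \det(I + T(s))$ with $T(s)$ a smoothing operator given by \eqref{sa.fkb}, using the variant involving $F(n-s)(I-K_N(n-s))^{-1}B_N(s)$; here the only term that is not already controlled by Lemmas~\ref{B.est}, \ref{F.est} in the right half-plane is $(I-K_N(n-s))^{-1}$, which by Lemma~\ref{IK.est} is bounded by $e^{CN^{n+2+\vep}}$ only on the circles $|s| = a_N N$. Feeding this into Weyl's estimate \eqref{weyl.estM}, broken at $M \sim N^n$, produces $|\vartheta_g(s)| \le e^{CN^{2n+2}}$ on those circles, and then a Phragmén–Lindelöf / maximum-modulus argument between consecutive circles upgrades this to a bound of the same order $2n+2$ throughout the right half-plane (the $\vep$ being absorbed into the constant by the usual trick). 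Combining the two half-planes, $\vartheta_g(s)$ has order at most $2n+2 < n^2+3n+2$.

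Writing $\vartheta_g(s) = P(s)/Q(s)$ where $Q(s)$ is the canonical product of genus $\le n+1$ over the poles (order $\le n+1$) and $P(s) = \vartheta_g(s) Q(s)$ is then entire, one checks that $P(s)$ is of order at most $\max(2n+2, n+1) = 2n+2$. To obtain the stated exponent, one multiplies instead by the auxiliary product $g_n(s)^p$ (from \eqref{gn.def}) used in the proof of Proposition~\ref{left.upbound}, or by an analogous product tailored to the pole divisor; in any case the final exponent $n^2+3n+2 = (n+1)(n+2)$ is comfortably larger than $2n+2$ for all $n\ge 1$, so the bound is not tight and there is slack to absorb the contribution of the $\tA(s)$-poles and the $g_n^p$ factor.

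The step I expect to be the main obstacle is the passage from the circle estimates of Lemma~\ref{IK.est} to a bound valid in the full right half-plane: the bound $e^{CN^{n+2+\vep}}$ on $(I-K_N(n-s))^{-1}$ holds only on a sparse family of circles, so one cannot simply plug in. The resolution is the standard one — a $\det(I-K_N^{n+1})$ ratio bound plus the minimum-modulus theorem already gives two-sided control on those circles, and then one interpolates across the annulus between $|s|=a_NN$ and $|s|=a_{N+1}(N+1)$ using that $\vartheta_g$ is holomorphic there (away from its finitely many poles, which are handled by multiplying through by the canonical product first) and applying the maximum principle. Getting the bookkeeping right so that the exponents of $N$ match up to give a genuine order bound (rather than just a "finite order" statement) is where the care is needed, but the numerology is forgiving given the generous target exponent.
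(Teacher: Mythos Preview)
Your overall strategy matches the paper's: use \eqref{sa.fkb} to write $T(s)=F(n-s)(I-K_N(n-s))^{-1}B_N(s)$, combine Lemmas~\ref{B.est}, \ref{F.est}, \ref{IK.est} to bound singular values on the circles $|s-n|=a_NN$, apply the Weyl estimate, and then clear the poles by a canonical product of order $n+1$ and invoke the maximum principle on the resulting entire function. The interpolation concern in your last paragraph is exactly this step and is handled just as you describe.

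The genuine error is in your Weyl-estimate arithmetic. Combining the three lemmas gives $\mu_k(T(s))\le e^{CN^{n+2+\vep}-ck^{1/n}}$ on the circles: the $(I-K_N(n-s))^{-1}$ factor contributes $e^{CN^{n+2+\vep}}$ to the \emph{norm}, and this enters the singular-value bound multiplicatively, not just the first factor in \eqref{weyl.estM}. Consequently the correct break point is where $ck^{1/n}\sim CN^{n+2+\vep}$, i.e.\ $M\sim N^{n(n+2+\vep)}$, \emph{not} $M\sim N^n$. With your choice $M\sim N^n$ the tail $\sum_{k>M}\mu_k(T)$ is uncontrolled, since for $k$ just above $N^n$ one has $ck^{1/n}\sim cN\ll CN^{n+2+\vep}$ and each term is still of size $e^{CN^{n+2+\vep}}$. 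With the correct break point the first factor gives
\[
\|T(s)\|^{M}\le \exp\bigl(CN^{n+2+\vep}\cdot N^{n(n+2+\vep)}\bigr)=\exp\bigl(CN^{(n+1)(n+2+\vep)}\bigr),
\]
which is precisely the exponent $n^2+3n+2$ in the statement. So the target exponent is not slack at all---it is exactly what this method produces---and your claimed order $2n+2$ is too optimistic. Once you fix the break point, the rest of your outline goes through and coincides with the paper's argument.
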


This follows almost immediately from the following growth estimate:
\begin{lemma}\label{det.growth}
For each $N\in \bbN$ there exists $a_N$, contained in some fixed interval $[a,b]$, such that
$$
| \vartheta_g(s)| \le \exp [CN^{(n+1)(n+2)+\vep}]\quad\text{for }|s-n| = a_N N,
$$
\end{lemma}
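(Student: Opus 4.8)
The plan is to prove the $|s-n| = a_N N$ growth bound in Lemma~\ref{det.growth} by combining the variant formula (\ref{sa.fkb}) for $S_g(n-s)A(s)$ with the estimates from Section~\ref{growth.sec}, exactly in the spirit of the half-plane argument of Lemma~\ref{vt.hlf.lemma}, but now paying for the factor $(I-K_N(n-s))^{-1}$ whose control requires the circle $|s| = a_N N$ provided by Lemma~\ref{IK.est}. First I would write
$$
\vartheta_g(s) = \det\bigl(I + T(s)\bigr),\qquad T(s) := F(n-s)(I-K_N(n-s))^{-1}B_N(s),
$$
valid for $|\re s - \nh| < N$, and insert $\rho^{-N}$ and $\rho^N$ appropriately so that each factor is estimated as a map between the $L^2$ spaces in which the bounds of Section~\ref{growth.sec} are stated: $B_N(s) = (\rho^N)(\rho^{-N}B_N(s))$ with $\rho^{-N}B_N(s)$ controlled by Lemma~\ref{B.est}, the middle factor $\rho^{-N}(I-K_N(n-s))^{-1}\rho^N$ controlled (on the circle $|s| = a_N N$) by Lemma~\ref{IK.est}, and $F(n-s) = (F(n-s)\rho^N)(\rho^{-N})$ with $F(n-s)\rho^N$ controlled in Hilbert--Schmidt norm by Lemma~\ref{F.est}. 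Since $a_N N$ lies in a bounded multiple of $N$, all the hypotheses $|s| < N/C$, $d(s,-\bbN_0)>\eta$ (the latter arranged by the choice of $a_N$ in a fixed interval, shrinking $\eta$ if necessary) are met on this circle.

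Next I would extract the singular-value estimate for $T(s)$. The Hilbert--Schmidt bound $\norm{F(n-s)\rho^N}_2 \le e^{CN}$ together with the operator-norm bounds $\norm{\rho^{-N}(I-K_N(n-s))^{-1}\rho^N}\le e^{CN^{n+2+\vep}}$ and $\mu_k(\rho^{-N}B_N(s))\le e^{CN - ck^{1/n}}$ gives, by the standard inequalities $\mu_k(AB)\le \norm{A}\mu_k(B)$ and $\mu_k(AB)\le \mu_k(A)\norm{B}$ applied with the Hilbert--Schmidt factor placed so that its $\mu_k$-decay is kept, a bound of the shape
$$
\mu_k(T(s)) \le e^{CN^{n+2+\vep}} e^{-ck^{1/n}},\qquad \norm{T(s)}\le e^{CN^{n+2+\vep}}.
$$
Then I would feed this into the Weyl estimate (\ref{weyl.estM}), breaking the product at $M$ chosen so that the tail $\sum_{k>M}\mu_k(T(s))$ is $O(1)$; since $\mu_k \le e^{CN^{n+2+\vep}-ck^{1/n}}$ this forces $M \sim (C N^{n+2+\vep}/c)^n = C' N^{n(n+2)+\vep'}$, and then
$$
|\vartheta_g(s)| \le \norm{T(s)}^M e^{O(1)} \le \exp\bigl[C N^{n+2+\vep} \cdot N^{n(n+2)+\vep'}\bigr] = \exp\bigl[C N^{(n+1)(n+2)+\vep''}\bigr],
$$
which is the claimed bound (note $(n+2)+n(n+2) = (n+1)(n+2)$, matching $n^2+3n+2$).

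The main obstacle I anticipate is bookkeeping the $\rho^{\pm N}$ weights so that every operator in the product is estimated between exactly the spaces where Section~\ref{growth.sec} provides bounds, and in particular making sure the Hilbert--Schmidt decay of $F(n-s)\rho^N$ is not wasted when it is multiplied against operator-norm factors — one must arrange the product $T(s) = \big(F(n-s)\rho^N\big)\big(\rho^{-N}(I-K_N(n-s))^{-1}\rho^N\big)\big(\rho^{-N}B_N(s)\big)$ and choose which of the two singular-value inequalities to apply at each juncture so that the surviving rapid ($e^{-ck^{1/n}}$) decay of $\rho^{-N}B_N(s)$ controls the tail. A secondary point is verifying that the restriction $d(s,-\bbN_0)>\eta$ holds on the circle $|s-n|=a_N N$: this follows because $a_N$ is confined to the fixed interval $[\tfrac1{6C},\tfrac1{2eC}]$ from Lemma~\ref{IK.est}, so the circle stays a bounded distance from the poles at $-\bbN_0$ after shifting the center to $n$, and any finitely many bad values of $N$ are harmless. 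Once Lemma~\ref{det.growth} is in hand, Proposition~\ref{detSA.prop} follows by a standard argument: $\vartheta_g(s)$ is meromorphic on $\bbC$ with poles only at $-\bbN_0$ of polynomially bounded multiplicity (from the analysis of $\tA(s)$ and (\ref{res.vtheta})), its zeros are among the scattering resonances, and the circle estimate of Lemma~\ref{det.growth} combined with a minimum-modulus / Hadamard factorization argument (as in the proof of Lemma~\ref{IK.est}) realizes it as a ratio of entire functions of order at most $n^2+3n+2$.
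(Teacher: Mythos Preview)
Your proposal is correct and follows essentially the same approach as the paper's own proof: write $T(s) = F(n-s)(I-K_N(n-s))^{-1}B_N(s)$ via (\ref{sa.fkb}), combine Lemmas~\ref{B.est}, \ref{F.est}, and \ref{IK.est} to obtain $\mu_k(T(s))\le e^{CN^{n+2+\vep}-ck^{1/n}}$ on the circle $|s-n|=a_NN$, and then apply the Weyl estimate (\ref{weyl.estM}) broken at $M\sim(CN^{n+2+\vep}/c)^n$. Your extra commentary on the $\rho^{\pm N}$ bookkeeping and the $d(s,-\bbN_0)>\eta$ condition just makes explicit what the paper leaves implicit.
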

\begin{proof}
As in the proof of Lemma~\ref{vt.hlf.lemma} we set
$T(s) :=  S_g(n-s)A(s) - I$.  But this time we will base the estimates on
(\ref{sa.fkb}), which for $|\re s - \nh| < N$ gives
$$
T(s) = F(n-s) (I-K_N(n-s))^{-1} B_N(s).
$$
Applying Lemmas \ref{IK.est}, \ref{F.est}, and \ref{B.est} to this expression gives
$$
\norm{T(s)} \le e^{CN^{n+2+\vep}}
$$
and
$$
\mu_k(T(s)) \le e^{CN^{n+2+\vep} - ck^{1/n}}
$$
for $|s-n| = a_N N$.  

Using the Weyl estimate (\ref{weyl.estM}), broken at $M = (CN^{n+2+\vep}/c)^{n}$,
we deduce immediately that 
$$
\det(I+|T(s)|) \le \exp [CN^{n(n+2+\vep)}N^{n+2+\vep}],
$$
for $|s-n| = a_N N$.
\end{proof}

\bigbreak
\begin{proof}[Proof of Proposition \ref{detSA.prop}]
By (\ref{res.vtheta}) and the observation that the multiplicity of the pole of 
$\tA(s)$ at $s=-k$ is bounded by $Ck^n$, we conclude that 
$$
\vartheta_g(s) = \frac{h_1(s)}{h_2(s)},
$$
where $h_1(s)$ and $h_2(s)$ are entire and $h_2(s)$ has order $n+1$.
Lemma~\ref{det.growth} now shows that $h_1(s)$ has order at most
$n^2 + 3n+2$.
\end{proof}

\section{Factorization of the scattering determinant}\label{fact.sec}

Suppose that $(X,g)$ and $(X_0, g_0)$ are two conformally compact manifolds,
hyperbolic near infinity and isometric to each other outside of some compact sets 
$K \subset X$ and $K_0 \subset X_0$.   Then we can assume that the parametrix 
construction from \S\ref{param.sec} is performed using identical constructions on 
$(X-K,g) \cong (X_0-K_0, g_0)$.  In particular, we can use the same auxiliary operator $A(s)$ in the
definitions of $\vartheta_g(s)$ and $\vartheta_{g_0}(s)$.  The formula (\ref{S.AEB})
shows that $S_g(s)$ and $S_{g_0}(s)$ differ by a smoothing operator, hence $S_g(s) S_{g_0}(s)^{-1}$
is determinant class and (\ref{detk.At}) implies
\begin{equation}\label{dss.tt}
\det S_g(s) S_{g_0}(s)^{-1} = \frac{\vartheta_{g_0}(s)}{\vartheta_g(s)}.
\end{equation}
We will call this the \emph{relative scattering determinant} for the pair $(g,g_0)$.
Proposition \ref{detSA.prop} shows that the right-hand side is a ratio of entire
functions of bounded order.  In this section we will refine this result into a Hadamard-type
factorization. 

Let $\Upsilon_g(s)$ be the meromorphic function defined by
\begin{equation}\label{Ups.def}
\Upsilon_g(s) := (2s-n) \otr [R_g(s) - R_g(n-s)],
\end{equation}
for $s \notin \bbZ/2$.   This expression is evaluated away from $\bbZ/2$ because of
anomalies that occur in the $0$-trace when the spaces $\rho^s \cinf(\barX)$ and
$\rho^{n-s}\cinf(\barX)$ intersect.  
There is a very important connection between $\Upsilon_g(s)$ and the relative scattering
determinant:
\begin{lemma}\label{dsr.ups.lemma}
With $(X,g)$ a compactly supported perturbation of $(X_0,g_0)$, in the sense described above,
for $s\notin \bbZ/2$ we have the meromorphic identity
\begin{equation}\label{dsr.RR}
- \del_s \log \det S_g(s) S_{g_0}(s)^{-1} = \Upsilon_g(s) - \Upsilon_{g_0}(s).
\end{equation}
\end{lemma}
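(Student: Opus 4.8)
The plan is to reduce (\ref{dsr.RR}) to a genuine trace‑class identity of Birman–Krein type and then evaluate that trace by a Maass–Selberg computation at the boundary. First I would rewrite the left‑hand side as a relative trace of $S_g(s)^{-1}\del_s S_g(s)$. By (\ref{dss.tt}) the left side of (\ref{dsr.RR}) equals $\del_s\log\vartheta_g(s)-\del_s\log\vartheta_{g_0}(s)$, and by (\ref{S.SAKB}) and (\ref{S.AEB}) the operator $T_g(s):=S_g(n-s)A(s)-I=E_g(n-s)\tsp B_N(s)$ is trace class, so $\del_s\log\vartheta_g(s)=\tr\bigl[(I+T_g(s))^{-1}\del_s T_g(s)\bigr]$ is a bona fide trace. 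Substituting $(I+T_g(s))^{-1}=A(s)^{-1}S_g(s)$, expanding $\del_s T_g(s)$, and then \emph{passing at once to the difference with the $(X_0,g_0)$ version}, the contributions built only from $A(s)$ are identical for the two metrics and cancel, and in the remaining trace‑class difference one may rearrange the bounded factors $A(s)^{\pm1}$ cyclically by the argument from \cite[\S4.1]{GS:1971} already used to obtain (\ref{res.vtheta}). Using $S_g(s)S_g(n-s)=I$ and the chain‑rule identity $S_g(s)(\del_s S_g)(n-s)=(\del_s S_g)(s)S_g(s)^{-1}$, this yields
\[
- \del_s\log\det S_g(s)S_{g_0}(s)^{-1}= - \tr\bigl[ S_g(s)^{-1}\del_s S_g(s) - S_{g_0}(s)^{-1}\del_s S_{g_0}(s) \bigr],
\]
a well‑defined relative trace since $S_g(s)-S_{g_0}(s)$ is smoothing by (\ref{S.AEB}).

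Next I would establish a Maass–Selberg relation and use it to evaluate the relative trace. Since $R_g(s)$ and $R_g(n-s)$ both invert $\Ds$, the kernel of $R_g(s)-R_g(n-s)$ lies in $\ker\Ds$ in each variable; matching its $\rho^{n-s}$ leading coefficient against (\ref{Egf.lead}) and using $S_g(n-s)E_g(s)\tsp=-E_g(n-s)\tsp$ from (\ref{SE.identities}) together with $S_g(s)=S_g(s)\tsp$ to verify symmetry, one gets $R_g(s)-R_g(n-s)=-(2s-n)E_g(s)E_g(n-s)\tsp$, hence $\Upsilon_g(s)=-(2s-n)^2\,\otr\bigl[E_g(s)E_g(n-s)\tsp\bigr]$ by (\ref{Ups.def}). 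To bring in $\del_s S_g(s)$, differentiate $\Ds E_g(s)=0$ in $s$ to get $\Ds\,\del_s E_g(s)=(2s-n)E_g(s)$, and apply Green's identity on $\{\rho\ge\vep\}$ to the pair $\del_s E_g(s)$, $E_g(n-s)$: the interior term reproduces $(2s-n)$ times the diagonal integral of $E_g(s)E_g(n-s)\tsp$, while the boundary term on $\{\rho=\vep\}$ is a pairing that we expand using (\ref{Egf.lead}). Taking $\FPe$, for $s\notin\bbZ/2$ the only surviving term with $\rho$‑weight exactly $n$ that is not ``collar‑universal'' carries $\del_s S_g(s)$, and a short computation identifies its contribution to $\otr[E_g(s)E_g(n-s)\tsp]$, after tracing over $\bX$, with an explicit $s$‑dependent multiple of $\tr\bigl[S_g(s)^{-1}\del_s S_g(s)\bigr]$; everything else (the $\log\vep$ anomaly, $\del_s$ of the prefactor $(2s-n)^{-1}$, and the contributions of the subleading polyhomogeneous terms of $E_g$) depends only on $\bar g$ in a collar of $\bX$ and, after using $S_g(s)\tsp S_g(s)^{-1}=I$, on the identity operator on $\bX$ — hence is literally the same for $g$ and $g_0$. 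Thus $\Upsilon_g(s)+\tr[S_g(s)^{-1}\del_s S_g(s)]$ is collar‑universal; subtracting the $(X_0,g_0)$ version and comparing with the display above gives (\ref{dsr.RR}).

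The hard part is entirely in this last step: one must carry out the finite‑part computation \emph{inside the $0$‑calculus} — verifying that $E_g(s)E_g(n-s)\tsp$ is polyhomogeneous so that $\otr$ is defined, that $\FPe$ commutes with $\del_s$ and with the $\bX$‑integration, and, most of all, that the ``collar‑universal'' pieces genuinely coincide for $g$ and $g_0$ \emph{before} one passes to differences, since neither $\tr[S_g(s)^{-1}\del_s S_g(s)]$ nor the universal remainder is separately finite. The restriction $s\notin\bbZ/2$ enters exactly here: it is what rules out resonant coincidences $\rho^{s}\sim\rho^{n-s+2j}$ between the two boundary expansions, which at the excluded points would produce genuinely metric‑dependent $\log\vep$ anomalies — this being also the reason $\Upsilon_g(s)$ itself is only defined off $\bbZ/2$. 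The surface case in \cite{GZ:1997} and the conformally compact hyperbolic case in \cite{Perry:2003} provide templates for organizing these estimates.
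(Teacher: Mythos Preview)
Your proposal is correct and follows essentially the same route as the paper, which does not give a self-contained argument but simply cites Patterson--Perry \cite[\S6]{PP:2001}: their Lemma~6.7 is precisely the Maass--Selberg/Green's-identity computation you outline in your second paragraph, and the decomposition of $S_g(s)^{-1}\del_s S_g(s)$ from the proof of their Prop.~5.3 is what makes the ``collar-universal'' cancellation rigorous. The paper's only additional remark is that the Patterson--Perry calculations carry over verbatim to manifolds hyperbolic near infinity because they rely solely on the model-neighborhood covering of $\bX$ introduced in \S\ref{param.sec}.
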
 
For the proof, we note the calculations of Patterson-Perry \cite[\S6]{PP:2001} for the
hyperbolic case apply
also when $(X, g)$ is conformally compact and hyperbolic near infinity, since they rely only
on a covering of $\bX$ by model neighborhoods of exactly the type we introduced in \S2. 
The formula (\ref{dsr.RR}) follows immediately from \cite[Lemma~6.7]{PP:2001} and
the decomposition of $S_g(s)^{-1} S_g'(s)$ introduced in the proof of \cite[Prop.~5.3]{PP:2001}.
Note also that a two-dimensional version of this result appears in the proof of
\cite[Prop.~4.5]{GZ:1997}.   And in even dimensions the result can be deduced directly 
from Guillarmou \cite[Thm.~1.2]{Gui:2005b}.

Define the Hadamard product over the scattering resonance set,
\begin{equation}\label{Pg.def}
P_g(s) := \prod_{\zeta\in \Rsc_g} \Bigl(\frac{s}{\zeta}, n+1\Bigr),
\end{equation}
using the elementary factors $E(s,p)$ defined in (\ref{elemfact}).
This converges by (\ref{Nsc.bound}) to an entire function of order $n+1$. 

\begin{proposition}\label{srdet.prop}
With $(X, g)$ a compactly supported perturbation of $(X_0, g_0)$, we have
\begin{equation}\label{detSr.factor}
\det S_g(s) S_{g_0}(s)^{-1}  = e^{q(s)} \frac{P_g(n-s)}{P_g(s)} \frac{P_{g_0}(s)}{P_{g_0}(n-s)},
\end{equation}
where $q(s)$ is a polynomial.  For a non-topological perturbation (i.e.~$X_0\cong X$) 
the degree of $q(s)$ is at most $n+1$.
\end{proposition}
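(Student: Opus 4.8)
The plan is to pin down the divisor of $\det S_g(s)S_{g_0}(s)^{-1}$ exactly, match it against the divisor of the asserted product of canonical products, and then control the surviving zero‑free factor.

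First I would read off the divisor. By (\ref{dss.tt}), $\det S_g(s)S_{g_0}(s)^{-1}=\vartheta_{g_0}(s)/\vartheta_g(s)$, and since the parametrix construction for $g$ and for $g_0$ uses the \emph{same} auxiliary operator $A(s)$, the operator $\tA(s)$ appearing in (\ref{res.vtheta}) is common to both metrics. Subtracting (\ref{res.vtheta}) for $g_0$ from that for $g$, the residues $\tr\res_\zeta[\tA'(s)\tA(s)^{-1}]$ cancel, so the divisor of $\det S_g S_{g_0}^{-1}$ is given purely in terms of $\nu_g-\nu_{g_0}$; rewriting these through (\ref{nu.mumu}) and the definition of the scattering resonance set identifies it with $\sum_{\zeta\in\Rsc_{g_0}}(\delta_\zeta-\delta_{n-\zeta})-\sum_{\zeta\in\Rsc_g}(\delta_\zeta-\delta_{n-\zeta})$. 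The products $P_g,P_{g_0}$ of (\ref{Pg.def}) converge to entire functions of order $n+1$ because $\Nsc(r)=O(r^{n+1})$ (Theorem~\ref{upbound.thm}), and the displayed divisor is exactly that of $\tfrac{P_g(n-s)}{P_g(s)}\tfrac{P_{g_0}(s)}{P_{g_0}(n-s)}$. Hence
\[
e^{q(s)}:=\det S_g(s)S_{g_0}(s)^{-1}\cdot\frac{P_g(s)}{P_g(n-s)}\cdot\frac{P_{g_0}(n-s)}{P_{g_0}(s)}
\]
is a meromorphic function with empty divisor, so $q$ is entire; and since $\vartheta_{g_0}/\vartheta_g$ is a ratio of entire functions of finite order by Proposition~\ref{detSA.prop} while $P_g,P_{g_0}$ have order $n+1$, the zero‑free function $e^{q}$ is of finite order, which forces $q$ to be a polynomial (of degree at most $n^2+3n+2$). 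This is the first assertion.

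For the sharper bound when $X_0\cong X$, I would differentiate the factorization and invoke Lemma~\ref{dsr.ups.lemma}, which gives
\[
q'(s)=-\bigl(\Upsilon_g(s)-\Upsilon_{g_0}(s)\bigr)-\partial_s\log\frac{P_g(n-s)}{P_g(s)}-\partial_s\log\frac{P_{g_0}(s)}{P_{g_0}(n-s)}.
\]
Since $e^{q}$ is zero‑ and pole‑free, $q'$ is automatically entire, so only a growth bound is needed; the divisor computation moreover shows that the poles of $\Upsilon_g$ (resp.\ $\Upsilon_{g_0}$) are cancelled by those of the corresponding logarithmic derivative of $P_g$ (resp.\ $P_{g_0}$), so $q'$ is the difference of the two entire functions $\Upsilon_g+\partial_s\log\tfrac{P_g(n-s)}{P_g(s)}$ and $\Upsilon_{g_0}+\partial_s\log\tfrac{P_{g_0}(n-s)}{P_{g_0}(s)}$. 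For a non‑topological perturbation $\Upsilon_g-\Upsilon_{g_0}$ is an honest relative trace $(2s-n)\tr\!\bigl[(R_g-R_{g_0})(s)-(R_g-R_{g_0})(n-s)\bigr]$ of a compactly supported trace‑class operator, and I would estimate its growth away from the poles by re‑running the arguments of \S\ref{growth.sec}--\S\ref{det.est.sec} on the \emph{relative} parametrix error $K_N^{g}(s)-K_N^{g_0}(s)$, which — with the cutoffs chosen so the perturbation region avoids their gradient supports — is compactly supported, has better singular‑value decay, and depends on $s$ only through the scalar $s(n-s)$. This should yield a bound $O(\langle s\rangle^{n})$ for $q'$ on a sequence of expanding circles, whence $\deg q\le n+1$; one also notes the consistency relation $q(n-s)=-q(s)$, forced by the functional equation $\det S_g(n-s)S_{g_0}(n-s)^{-1}=[\det S_g(s)S_{g_0}(s)^{-1}]^{-1}$ (inherited by the $P$‑ratio).

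The main obstacle is exactly this last growth estimate: one must upgrade the global order bound $n^2+3n+2$ of Proposition~\ref{detSA.prop} down to order $n+1$ for the \emph{relative} determinant $\vartheta_{g_0}/\vartheta_g$ when $X_0\cong X$, equivalently control the growth of the relative resolvent $0$‑trace $\Upsilon_g-\Upsilon_{g_0}$ off its poles, exploiting that $K_N^{g}(s)-K_N^{g_0}(s)$ is compactly supported with only polynomial $s$‑dependence. Everything else — the divisor bookkeeping, the convergence of $P_g,P_{g_0}$, and the passage from ``$q$ entire of finite order'' to ``$q$ a polynomial'' — is routine given what has already been established.
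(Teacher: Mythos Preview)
Your argument for the first assertion --- computing the divisor via (\ref{res.vtheta}) and (\ref{nu.mumu}), then invoking (\ref{dss.tt}) and Proposition~\ref{detSA.prop} to conclude that $q$ is a polynomial --- is exactly what the paper does.

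For the sharp degree bound $\deg q\le n+1$ in the non-topological case, however, the paper takes a route quite different from yours. Rather than trying to estimate $\Upsilon_g-\Upsilon_{g_0}$ directly, it introduces the zeta-regularized \emph{relative determinant} $\Drel(s)=\exp[-\partial_w\zeta(w,s)|_{w=0}]$, where $\zeta(w,s)=\tr[\hR_g(s)^w-\hR_{g_0}(s)^w]$ is built from half-density Laplacians (the point being that $\hR_g(s)^m-\hR_{g_0}(s)^m$ is trace class only for $m\ge(n+3)/2$, not for $m=1$). The Birman--Krein spectral shift gives a polynomial bound on the appropriate derivatives of $\log\Drel(s)$ for $\re s$ bounded away from $\nh$, and a cruder exponential bound in the critical strip; Phragm\'en--Lindel\"of then forces the entire remainder $q_2$ in $\Drel(s)=e^{q_2(s)}P_g(s)/P_{g_0}(s)$ to be polynomial. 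Finally, the heat-kernel asymptotic $\log\Drel(s)\sim a_0(s(n-s))^{(n+1)/2}\log[s(s-n)]$ as $\re s\to\infty$ pins down $\deg q_2\le n+1$, and substituting back yields the same bound on $q$.

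Your proposed direct route has a genuine gap that you yourself flag. The observation that, with cutoffs chosen appropriately, $K_N^g(s)-K_N^{g_0}(s)$ is compactly supported and depends on $s$ only through $s(n-s)$ is correct and pleasant, but it does not by itself control $\Upsilon_g-\Upsilon_{g_0}$: the resolvent difference $R_g(s)-R_{g_0}(s)=M_N(s)\bigl[(I-K_N^g(s))^{-1}-(I-K_N^{g_0}(s))^{-1}\bigr]$ still carries factors $(I-K_N)^{-1}$ whose only available bound (Lemma~\ref{IK.est}) is $e^{CN^{n+2+\varepsilon}}$, precisely the source of the order $n^2+3n+2$ in Proposition~\ref{detSA.prop}. ``Re-running \S\ref{growth.sec}--\S\ref{det.est.sec}'' on the relative error therefore does not obviously improve that weak bound to the needed $O(\brak{s}^n)$. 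Moreover, your assertion that $\Upsilon_g-\Upsilon_{g_0}$ is an honest trace of a trace-class operator is not justified: $R_g(s)-R_{g_0}(s)$ is \emph{not} trace class in dimension $n+1\ge 3$ (this is exactly why the paper passes to $m$-th powers with $m=(n+3)/2$), so the $0$-trace in $\Upsilon$ remains a regularization even in the relative setting. The paper's detour through $\Drel$ and heat asymptotics is designed to circumvent this difficulty.
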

\begin{proof}
Since the same $A(s)$ can be used for both $g$ and $g_0$, 
we see immediately from (\ref{res.vtheta}) that
\begin{equation}\label{res.ldss}
\res_\zeta \Bigl[\del_s \log \det S_g(s) S_{g_0}(s)^{-1} \Bigr] 
=  - \nu_g(\zeta) + \nu_{g_0}(\zeta).
\end{equation}
Then from (\ref{nu.mumu}) we obtain the right-hand side of (\ref{detSr.factor}) with $q(s)$ an entire function.
The fact that $q(s)$ a polynomial follows immediately from (\ref{dss.tt}) and
Proposition \ref{detSA.prop}. 

All that remains is
to improve the estimate on the degree in the case of a metric perturbation.   
For this purpose we will introduce the zeta-regularized relative determinant and adapt some 
arguments from Borthwick-Judge-Perry \cite{BJP:2003}.  Assume that $g$ and $g_0$
are metrics on $X$ that agree outside a compact set.
Let $L^2(X)$ denote the space of square-integrable half-densities, with $\hD_g$
and $\hD_{g_0}$ the Laplacians on $L^2(X)$ associated to the respective metrics.

By the joint parametrix construction we can see that the
operator $\hR_g(s)^m - \hR_{g_0}(s)^m$ is trace class on $L^2(X)$ for $\re s >n$
with $m=(n+3)/2$.  For $\re w\ge m$ and $\re s >n$ we introduce the relative
zeta function
$$
\zeta(w,s) := \tr \bigl[ \hR_g(s)^w - \hR_{g_0}(s)^w \bigr],
$$
which could also be written in terms of heat operators,
\begin{equation}\label{zeta.heat}
\zeta(w,s) = \frac{1}{\Gamma(w)} \int_0^\infty t^w e^{ts(n-s)} \tr \bigl[ e^{-t\hD_{g}} 
- e^{-t\hD_{g_0}} \bigr] \frac{dt}{t}.
\end{equation}
The heat expansions as $t \to 0$ can be used to show that $\zeta(w,s)$ extends meromorphically
to $\re w > -1$, with a single simple pole at $w=1$.   Hence the relative determinant,
$$
\Drel(s) := \exp \bigl[-\del_w\zeta(w,s)|_{w=0}\bigr],
$$
is well-defined for $\re s > n$.

The Birman-Krein theory of the spectral shift (see e.g.~\cite[Ch.~8]{Yafaev})
gives us a shift function $\xi(\lambda)$ for $\lambda \in [\mu_0, \infty)$, where
$\mu_0 := \inf (\sigma(\Delta_g)\cup \sigma(\Delta_{g_0}))$, such that
$\lambda^{-m-1} \xi(\lambda)$ is integrable and
\begin{equation}\label{BK.zeta}
\zeta(w,s) = -w \int_{\mu_0}^\infty (\lambda - s(n-s))^{-w-1} \xi(\lambda)\>d\lambda,
\end{equation}
for $\re w \ge m$.  This leads to the identity
\begin{equation}\label{del.Dr}
\bigl[(2s-n)^{-1} \del_s \bigr]^m \log \Drel(s) = (-1)^{m-1} (m-1)!\>  
\tr \bigl[ \hR_g(s)^m - \hR_{g_0}(s)^m \bigr],
\end{equation}
valid for $\re s \ge n$.  By introducing the 0-trace on the right, which equals
the trace for a trace class operator, we can extend this to a meromorphic identity for $s \in\bbC$.
In particular, since the resolvents are non-singular for $\re s \ge \nh$, $s \notin [\nh, n]$,
we deduce that $\Drel(s)$ extends to an analytic function of $s$ 
for $\re s \ge \nh$, $s \notin [\nh, n]$.

From Lemma~\ref{dsr.ups.lemma} and this meromorphic extension of (\ref{del.Dr}) we deduce that
$$
\bigl[(2s-n)^{-1} \del_s \bigr]^m \log \det S_g(s) S_{g_0}(s)^{-1}
=  \bigl[(2s-n)^{-1} \del_s \bigr]^m \log \frac{\Drel(n-s)}{\Drel(s)},
$$
for $\re s = \tfrac{n}2$, $s \ne \nh$.  This shows that $\Drel(s)$ has a meromorphic continuation
to all of $\bbC$ such that
\begin{equation}\label{SS.DD}
 \det S_g(s) S_{g_0}(s)^{-1}
=  e^{q_1(s)} \frac{\Drel(n-s)}{\Drel(s)},
\end{equation}
with $q_1(s)$ a polynomial of degree at most $m-1 = (n+1)/2$.

By analyzing the behavior of (\ref{del.Dr}) in the vicinity of a resonance, we can show that
the divisor of $\Drel(s)$ for $\re s \ge \nh$ coincides with that of $P_g(s)/P_{g_0}(s)$.
The proof is almost identical to that of \cite[Lemma~5.3]{BJP:2003}, except that we must
use the $m$-th power of the resolvent.  We omit the details.

Since (\ref{detSr.factor}) has already been proven with $q(s)$ polynomial, the formula
(\ref{SS.DD}) together with knowledge of the divisor of $\Drel(s)$ for $\re \ge \nh$ yields
\begin{equation}\label{Dr.PP}
\Drel(s) = e^{q_2(s)} \frac{P_g(s)}{P_{g_0}(s)},
\end{equation}
for some entire function $q_2(s)$ such that that $q_2(s) - q_2(n-s)$ is polynomial.
A separate estimate is required to prove that $q_2(s)$ is itself polynomial.  
From (\ref{BK.zeta}) we can derive 
$$
\Bigl| \tr \bigl[ \hR_g(s)^m - \hR_{g_0}(s)^m \bigr] \Bigr| \le C,
$$
for $\re s \ge \tfrac{n}2 + \vep$ and $|s| > n$.  
By (\ref{del.Dr}) gives a polynomial bound on $q_2(s)$ in this range.
Since $q_2(s) - q_2(n-s)$ is polynomial, we also get a polynomial bound 
for $\re s \le \tfrac{n}2 - \vep$ and $|s-n| > n$.

The final step in the estimate of $q_2(s)$ is a bound of the form
\begin{equation}\label{exp.strip}
\Bigl| \tr \bigl[ \hR_g(s)^q - \hR_{g_0}(s)^q \bigr] \Bigr| \le e^{C\brak{s}^M},
\end{equation}
for some $q \ge m$, in the strip $|\re s - \tfrac{n}2| \le \vep$, away from union of the resonance sets.
We can produce formulas for the kernels of $R_g(s)^q$ and $R_{g_0}(s)^q$ by
applying $[(2s-1)^{-1} \del_s]^{q-1}$ to the parametrix expression $R_g(s) = M_N(s)(I-K_N(s))^{-1}$.
(Taking $N=1$ will suffice here.)  To obtain the bound, we can exploit the fact that
$(2s-1)^{-1} \del_s K_N(s)$ is independent of $s$ and can be decomposed as a compactly supported operator
of order $-2$ plus a smoothing operator.  Explicit estimates for the kernel of this smoothing
term (and its derivatives) follow from \cite[Lemmas~4.1 and 4.2]{GZ:1995b}.   We can 
thereby estimate the Schatten class norms of these terms.    
Operator norm estimates of $\rho^N M_N(s) \rho^N$ follows from the same lemmas.  That 
leaves factors of $\rho^{-N} (I-K_N(s))^{-1}\rho^{N}$, whose operator norm is estimated 
away from the resonances in Lemma~\ref{IK.est}.

From (\ref{exp.strip}) we obtain an exponential estimate on $q_2(s)$ in the strip 
$|\re s - \tfrac{n}2| \le \vep$ to complement the polynomial bounds for $|\re s - \tfrac{n}2| \ge \vep$.
The Phragmen-Lindel\"of theorem implies that $q_2(s)$ is a polynomial.
Returning now to (\ref{Dr.PP}), we consider the logarithm of this equation,
$$
q_2(s) = \log \Drel(s) + \log P_{g_0}(s) - \log P_{g}(s),
$$
as $\re s \to \infty$.  The logarithms of the Hadamard produces are bounded
by $|s|^{n+1+\vep}$.  And by applying the heat kernel expansion in (\ref{zeta.heat}),
we can derive the asymptotic
$$
\log \Drel(s) \sim a_0 (s(n-s))^{(n+1)/2} \log [s(s-n)],
$$
as $\re s \to \infty$.  Since $q_2(s)$ is already known to be polynomial, these asymptotics
imply that $q_2(s)$ has degree at most $n+1$.  

Now if we substitute (\ref{Dr.PP}) into (\ref{SS.DD}), we can derive (\ref{detSr.factor})
with $q(s)$ a polynomial of degree at most $n+1$.
\end{proof}

\bigbreak
For $\dim X$ even ($n$ odd), Guillarmou \cite{Gui:2005b} introduced a regularized
determinant of $\tS_g(s)$ based on the Kontsevich-Vishik trace, which we will denote
by $\detkv \tS_g(s)$.   This definition is intrinsic, in contrast to $\vartheta_g(s)$ which
includes the ad hoc contribution from $A(s)$.  The tradeoff is that it seems quite
difficult to obtain direct growth estimates for $\detkv \tS_g(s)$, whereas $\vartheta_g(s)$
was defined precisely so it could be estimated easily.  

These two regularizations are related.
Since $S_g(n-s)A(s)$ is determinant class, by \cite[Prop.~27]{KV:1994} we have
\begin{equation}\label{detk.At}
\detkv \tS_g(s) = \frac{\detkv A(s)}{\vartheta_g(s)},
\end{equation}
Despite the explicit formula we have for the kernel $A(s)$, it appears quite difficult to analyze 
$\detkv A(s)$ directly.  For our application (the Poisson formula) we would need polynomial growth estimates
on the renormalized trace $\tr_{\rm KV} A(s)^{-1} A'(s)$.  The singular-value techniques
which were crucial in the estimation of $\vartheta_g(s)$ are not available for this purpose.

We can, however, use the relative scattering determinant to get some information about
$\detkv \tS_g(s)$.  In context of Proposition~\ref{srdet.prop}, the combination of
(\ref{dss.tt}) and (\ref{detk.At}) implies that
$$
\frac{\detkv \tS_g(s)}{\detkv \tS_{g_0}(s)} = \det S_g(s) S_{g_0}(s)^{-1}.
$$
If the metric $g_0$ is hyperbolic, then 
the functional equation for the Selberg zeta function given by Guillarmou \cite[Thm.~1.3]{Gui:2005b}
shows that $\detkv S_{g_0}(s)$ is a ratio of entire functions of order $n+1$, with divisor equal
to that of $P_{g_0}(n-s)/P_{g_0}(s)$.  Thus
Proposition~\ref{srdet.prop} gives the following:
\begin{proposition}\label{sdet.prop}
Suppose that $\dim X$ is even and $(X,g)$ is a compactly 
supported perturbation of a conformally compact hyperbolic manifold $(X_0, g_0)$.
Then the KV-determinant of the scattering matrix admits a Hadamard factorization:
\begin{equation}\label{detkS.factor}
\detkv \tS_g(s) = e^{q(s)} \frac{P_g(n-s)}{P_g(s)},
\end{equation}
where $q(s)$ is a polynomial.  For a non-topological perturbation ($X\cong X_0$), 
the degree of $q(s)$ is at most $n+1$.
\end{proposition}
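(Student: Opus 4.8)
The plan is to deduce the factorization from the relative scattering determinant, whose Hadamard factorization is Proposition~\ref{srdet.prop}, together with Guillarmou's functional equation for $\detkv \tS_{g_0}(s)$ in the hyperbolic background case. First I would apply (\ref{detk.At}) to both $g$ and $g_0$; since the parametrix construction can be set up with the same auxiliary operator $A(s)$ for the two metrics (as in the setup preceding Proposition~\ref{srdet.prop}), the factors $\detkv A(s)$ cancel, and combining with (\ref{dss.tt}) yields the meromorphic identity
$$
\frac{\detkv \tS_g(s)}{\detkv \tS_{g_0}(s)} = \frac{\vartheta_{g_0}(s)}{\vartheta_g(s)} = \det S_g(s) S_{g_0}(s)^{-1}.
$$
By Proposition~\ref{srdet.prop} the right-hand side equals $e^{q_1(s)}\frac{P_g(n-s)}{P_g(s)}\frac{P_{g_0}(s)}{P_{g_0}(n-s)}$ for a polynomial $q_1$, of degree at most $n+1$ in the non-topological case $X_0\cong X$.

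Next I would record the factorization of the background term. Guillarmou's functional equation for the Selberg zeta function \cite[Thm.~1.3]{Gui:2005b} shows that $\detkv \tS_{g_0}(s)$ is a ratio of entire functions of order at most $n+1$ whose divisor coincides with that of $P_{g_0}(n-s)/P_{g_0}(s)$. Hence the meromorphic function $\detkv \tS_{g_0}(s)\,P_{g_0}(s)/P_{g_0}(n-s)$ is entire, nowhere vanishing, and of order at most $n+1$ (the order bound following from the Nevanlinna estimate for a quotient, exactly as in the passage from Proposition~\ref{detSA.prop} to the polynomiality of $q(s)$ in Proposition~\ref{srdet.prop}), so by the Hadamard factorization theorem it equals $e^{q_0(s)}$ with $q_0$ a polynomial of degree at most $n+1$. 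Thus $\detkv \tS_{g_0}(s) = e^{q_0(s)}\frac{P_{g_0}(n-s)}{P_{g_0}(s)}$.

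Substituting this into the displayed identity, the $P_{g_0}$ factors cancel exactly, leaving
$$
\detkv \tS_g(s) = e^{q_1(s)+q_0(s)}\,\frac{P_g(n-s)}{P_g(s)},
$$
which is (\ref{detkS.factor}) with $q=q_1+q_0$. For a non-topological perturbation both $q_1$ and $q_0$ have degree at most $n+1$, so $q$ does as well. The argument here is essentially bookkeeping, since the two substantive inputs --- Proposition~\ref{srdet.prop} and \cite[Thm.~1.3]{Gui:2005b} --- are already in place; the one point I would verify with care is that the divisor of $\detkv \tS_{g_0}(s)$ matches that of $P_{g_0}(n-s)/P_{g_0}(s)$ on the nose, including at the finitely many points where $\zeta(n-\zeta)\in\sigma_{\rm d}(\Delta_{g_0})$, so that no spurious entire factor survives the cancellation and $q$ is genuinely polynomial rather than merely entire.
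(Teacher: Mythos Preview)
Your proposal is correct and follows essentially the same route as the paper: combine (\ref{dss.tt}) and (\ref{detk.At}) to identify $\detkv \tS_g(s)/\detkv \tS_{g_0}(s)$ with the relative scattering determinant, invoke Guillarmou's functional equation \cite[Thm.~1.3]{Gui:2005b} to obtain the Hadamard factorization of $\detkv \tS_{g_0}(s)$, and then apply Proposition~\ref{srdet.prop}. Your added care about the divisor match and the order bound for the entire nowhere-vanishing remainder is appropriate, but does not depart from the paper's argument.
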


\section{Poisson formula}\label{poisson.sec}

In this section, we will continue to assume that $(X,g)$ and $(X_0,g_0)$ are isometric
outside of some compact sets.  We will assume in addition that the background manifold
$(X_0,g_0)$ is conformally compact hyperbolic.

The wave $0$-trace is defined as a distribution on $\bbR$ by
$$
\Theta(t) := \otr \left[ \cos \left(t \sqrt{\smash[b]{\Delta_g - n^2/4}}\,\right) \right].
$$
This can be separated into contributions from the discrete and continuous
spectrum, $\Theta(t) = \Theta_{\rm d}(t) + \Theta_{\rm c}(t)$.  The discrete part is
given by an actual trace,
\begin{equation}\label{t.disc}
\Theta_{\rm d}(t) = \frac12 \sum_{\res \zeta > \frac{n}2} \Bigl(e^{(\zeta-n/2)t} - e^{(n/2-\zeta)t}  \Bigr).
\end{equation}
On the other hand, the functional calculus gives a formula for the continuous part:
for $\phi \in \cinf_0(\bbR)$,
\begin{equation}\label{funct.calc}
\int_{-\infty}^\infty \phi(t) \Theta_{\rm c}(t)\>dt = \frac{1}{4\pi} \int_{-\infty}^\infty (2i\xi)
\otr \Bigl[R_g(\tfrac{n}2 + i\xi) - R_g(\tfrac{n}2 - i\xi)\Bigr] \hat\phi(\xi)\>d\xi.
\end{equation}
The integrand on the right-hand side is equal to $\Upsilon_g(\tfrac{n}2+i\xi)$
for $\xi \ne 0$ by definition.  

In the hyperbolic case, the formula of Patterson-Perry
\cite[eq.~(6.7)]{PP:2001} expresses $\Upsilon_{g_0}(s)$ in terms of the logarithmic 
derivative of the Selberg zeta function:
\begin{equation}\label{ups.zeta}
\Upsilon_{g_0}(s) = \frac{Z_{g_0}'}{Z_{g_0}}(s) + \frac{Z_{g_0}'}{Z_{g_0}}(n-s)
+  \kappa_0(s),
\end{equation}
where $q_1(s)$ is a polynomial of degree at most $n+1$, and $\kappa_0(s)$ is a topological
term given by
$$
\kappa_0(s) := \pi^{-n/2} \frac{\Gamma(\nh) \Gamma(s) \Gamma(n-s)}{ \Gamma(n) 
\Gamma(\nh-s)\Gamma(s-\nh)} \ovol(X_0,g_0).
$$
This shows in particular that $\Upsilon_{g_0}(s)$ extends
meromorphically from $\bbC- \bbZ/2$ to all of $\bbC$.

\begin{lemma}\label{upsing.lemma}
Assume that $(X,g)$ is conformally compact and hyperbolic near infinity.
For $\psi \in \cinf_0(\bbR)$ we have
\[
\begin{split}
& \frac{1}{4\pi} \int_{-\infty}^\infty (2i\xi)
\otr \Bigl[R_g(\tfrac{n}2 + i\xi) - R_g(\tfrac{n}2 - i\xi)\Bigr] \psi(\xi)\>d\xi \\
&\qquad=  \frac{1}{4\pi} \int_{-\infty}^\infty \Upsilon_g(\tfrac{n}2 + i\xi) \psi(\xi)\>d\xi + \frac12 m_{n/2}\psi(0).
\end{split}
\]
\end{lemma}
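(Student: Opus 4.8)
The starting point -- already noted in the text -- is that for $\xi\neq 0$ the two integrands agree: since $n-(\tfrac n2+i\xi)=\tfrac n2-i\xi$, the definition of $\Upsilon_g$ gives $(2i\xi)\otr[R_g(\tfrac n2+i\xi)-R_g(\tfrac n2-i\xi)]=\Upsilon_g(\tfrac n2+i\xi)$ away from the origin. Hence the difference of the two sides of the asserted identity, viewed as a functional of $\psi$, is a distribution supported at $\xi=0$, so a finite linear combination of $\delta_0$ and its derivatives. The task is to show it equals $2\pi\,m_{n/2}\,\delta_0$; equivalently, as distributions on $\bbR$ one must verify that $(2i\xi)\otr[R_g(\tfrac n2+i\xi)-R_g(\tfrac n2-i\xi)]$, interpreted as the functional calculus (\ref{funct.calc}) dictates, equals $\Upsilon_g(\tfrac n2+i\xi)+2\pi\,m_{n/2}\,\delta_0$, the function $\Upsilon_g(\tfrac n2+i\xi)$ being regular at $\xi=0$.

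Two features at $s=\tfrac n2$ control this discrepancy, and I would treat them separately. First, the resolvent $R_g(s)$ has a pole at $s=\tfrac n2$ -- of order at most two, as one checks by comparison with the explicit model resolvent $R_0(s)$ on the hyperbolic ends -- with $\rank\res_{\tfrac n2}R_g(s)=m_{n/2}$. Because the parametrisation $s=\tfrac n2+i\xi$ carries the critical line straight through this pole, the finite-rank singular part of $R_g$ makes $\otr[R_g(\tfrac n2+i\xi)-R_g(\tfrac n2-i\xi)]$ singular at $\xi=0$; regularised as the functional calculus dictates and multiplied by $2i\xi$, a Plemelj-type computation turns this singular part into $2\pi\,m_{n/2}\,\delta_0$ plus a principal-value term, the latter reproducing the corresponding piece of $\Upsilon_g$. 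Second, there is the $0$-trace anomaly: the diagonal of $R_g(s)$ has a polyhomogeneous expansion beginning at $\rho^{2s-n}$, so the diagonal of $R_g(s)-R_g(n-s)$ carries both exponents $\rho^{\pm(2s-n)}$, which at $s=\tfrac n2$ collide with $\rho^{0}$ and create a $\log\rho$ -- precisely the reason $\Upsilon_g$ is defined only off $\bbZ/2$. One shows that after the Hadamard finite part in $\otr$ this anomaly contributes to $\otr[R_g(s)-R_g(n-s)]$ merely a \emph{simple} pole at $s=\tfrac n2$, which the factor $2s-n$ in $\Upsilon_g$ annihilates; since the identical finite-part prescription is in force on the left-hand side, the anomaly produces nothing new in the difference distribution. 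Only the finite-rank pole of the resolvent then survives, so the difference is $2\pi\,m_{n/2}\,\delta_0$, which is the lemma.

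Both the cancellation of the anomalous term and the evaluation of the $\delta_0$ weight reduce to local computations near $\bX$, where $g$ is exactly hyperbolic: they can be carried out with the model neighbourhoods of \S\ref{param.sec} and the closed form of $R_0(s)$, in the spirit of the Patterson-Perry calculations invoked for Lemma~\ref{dsr.ups.lemma}. This is also where the main difficulty lies. At $s=\tfrac n2$ the order of the resolvent pole and the order of the $0$-trace anomaly are simultaneously maximal and interact, so one must follow the distributional structure near $\xi=0$ with care -- keeping Hadamard finite part, principal value, and one-sided $(\pm i0)$ boundary values strictly apart -- in order to isolate the coefficient of $\delta_0$ and verify that all the remaining, individually singular, contributions cancel against $\Upsilon_g(\tfrac n2+i\xi)$.
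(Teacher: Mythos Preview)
Your overall strategy is right: the two sides agree for $\xi\ne 0$, so the difference is a distribution supported at the origin, and the job is to identify it as $2\pi\,m_{n/2}\,\delta_0$. But the proposal has real gaps in the two places where the work actually happens.

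First, the order of the pole of $R_g(s)$ at $s=\tfrac n2$. Your justification (``comparison with the explicit model resolvent $R_0(s)$ on the hyperbolic ends'') does not give this: $R_0(s)$ is regular at $\tfrac n2$, so comparing with it says nothing about the order of a pole coming from the interior. More to the point, order at most two is not enough --- the computation of the coefficient requires the pole to be \emph{simple}. The paper establishes this in two steps: the self-adjoint resolvent estimate $\norm{R_g(s)}\le |\im(s^2-ns)|^{-1}$ forces order at most two, and then the range of the order-two part would consist of $L^2$ eigenfunctions at $n^2/4$, which are ruled out by Mazzeo's unique continuation theorem. Without simplicity you cannot write $R_g(s)=\sum_j \phi_j(s)\otimes\phi_j(s)/(2s-n)+\text{holomorphic}$, and the coefficient calculation below does not go through.

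Second, the coefficient. Your ``Plemelj-type computation'' would at best produce a $\delta_0$ weighted by the $0$-trace of the residue of $R_g$ at $\tfrac n2$, and you give no argument that this equals $m_{n/2}$ (it is not obvious, since the $\phi_j(\tfrac n2)$ lie in $\rho^{n/2}\cinf(\barX)$ and are not $L^2$-normalised). The paper does not proceed this way. It uses the Maass--Selberg integration by parts (Patterson--Perry) to split the cutoff integral $I_\vep(s)$ into three boundary terms; only $I_\vep^1$ carries the anomalous $\vep^{\pm(n-2s)}$ powers, and the singular part of the resolvent feeds into $I_\vep^1$ to give $\tr\bigl(\sum_j \phi_j^\sharp(\tfrac n2)\otimes\phi_j^\sharp(\tfrac n2)\bigr)\cdot\lim_{\vep\to 0}(\vep^{-2i\xi}-\vep^{2i\xi})/(2i\xi)=\pi\,\delta(\xi)\cdot\tr(\cdots)$. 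The trace is then identified as $2m_{n/2}$ via the scattering matrix: $S_g(\tfrac n2)=-I+2P$ with $P=\tfrac12\sum_j\phi_j^\sharp\otimes\phi_j^\sharp$, and since $S_g(\tfrac n2)^2=I$ one sees that $P$ is an orthogonal projection; its rank is $m_{n/2}$ because a rank drop would produce an $L^2$ eigenfunction, again excluded by Mazzeo.

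Finally, your clean separation of ``resolvent pole $\Rightarrow\delta_0$'' versus ``$0$-trace anomaly $\Rightarrow$ simple pole killed by $2s-n$'' does not reflect what actually happens. In the paper's argument the $\delta_0$ arises precisely from the \emph{interaction} of the two: the anomalous $\vep^{\pm 2i\xi}$ terms, which give zero finite part for each fixed $\xi\ne 0$, combine with the $(2s-n)^{-1}$ from the resolvent pole to produce $(\vep^{-2i\xi}-\vep^{2i\xi})/(2i\xi)$, whose distributional limit is the $\delta$. Treating them as independent contributions would miss this.
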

\begin{proof}
Away from $\xi = 0$, this formula reduces to the definition of $\Upsilon_g(s)$, so the point of
the proof is to compute the anomaly in the $0$-trace at $\xi =0$.
This anomaly occurs exactly as in the proof of \cite[Prop.~4.5]{GZ:1997}.
(See also \cite[Lemma.~11.5]{Borthwick} for an expository treatment.)

In the present context, the relevant computations are done by Patterson-Perry in \cite[\S6.1]{PP:2001}.
They use an integration by parts inspired by the Maass-Selberg relation to reduce the integral, 
$$
I_\vep(s) := (2s-n) \int_{\rho \ge \vep} \Bigl[ R_g(s;z,z') - R_g(n-s;z,z') \Bigr]\Big|_{z=z'}
\>dg(z),
$$
to a sum of three terms, namely, 
\[
\begin{split}
I_\vep^1(s) & = \int_{\rho = \vep} \Bigl[ R_g(s;z,z') - R_g(n-s;z,z') \Bigr]\Big|_{z=z'}
\>d\sigma_\vep(z), \\
I_\vep^2(s) & = (2s-n)^{-1} \int_\bX \int_{\rho = \vep} \Biggl[ E_g(n-s;z,x') \>\del_s(-\rho \del_\rho + s) E_g(s;z,x') \\
&\hskip1in + (-\rho \del_\rho+n-s) E_g(n-s;z,x')\>  \del_s E_g(s;z,x') \Biggr]
\>d\sigma_\vep(z)\> dh(x'), \\
I_\vep^3(s) & = - \int_\bX \int_{\rho = \vep} E_g(n-s;z,x)\> \del_s E_g(s;z,x')\>d\sigma_\vep(z)\> dh(x'), 
\end{split}
\]
where $\sigma_\vep$ is the metric induced on $\{\rho=\vep\}$ by $g$.  The anomaly
we are interested is caused by factors of the form $\vep^{\pm(n-2s)}$ occuring
in the asymptotic expansion of $I_\vep(s)$ as $\vep \to 0$.  From the analysis in 
\cite[Lemmas~6.5 and 6.7]{PP:2001}, we can see that such terms
do not occur in either $I_\vep^2(s)$ or $I_\vep^3(s)$.
However, the expansion of $I_\vep^1(s)$ does contain terms of this form.
Patterson-Perry showed that
$$ 
\FPe I_\vep^1(s) = 0, \qquad \text{for }\re s = \nh, s\ne \nh.
$$ 
Their argument does not extend to $s = \nh$ if there is a resonance there.

Suppose that a resonance of multiplicity $m_{n/2}$ occurs at $s=\tfrac{n}2$.
We first argue that such a resonance must be simple.  By self-adjointness
of the Laplacian, for $\varphi \in \cinf_0(X)$,
$$
\im \int_X  \overline{\varphi}\> \Ds \varphi\>dg
= \im (s^2-ns) \>\norm{\varphi}^2.
$$
This leads directly to a resolvent estimate
\begin{equation}\label{res.est}
\norm{R_g(s)} \le \frac{1}{| \im (s^2-ns)|},
\end{equation}
for $\re s >\nh$, which shows that the order of a pole at $s=\nh$ is at most two.
Using the relation
$$
(\Delta_g - n^2/4) R_g(s) = I - (s - \nh)^2 R_g(s),
$$
along with (\ref{res.est}), we can deduce that the range of the order two component
of the singular part consists of $L^2$ eigenfunctions with eigenvalue $n^2/4$.  By
Mazzeo's unique continuation result \cite{Ma91a}, 
an asymptotically hyperbolic manifold has no eigenvalue at $n^2/4$. 
Hence the order of the pole of $R_g(s)$ at $s = \nh$ is one.

Because the pole is simple, near $s = \nh$ the resolvent will have the structure
$$
R_g(s) = \sum_{j=1}^{m_{n/2}} \frac{\phi_j(s)\otimes \phi_j(s)}{2s-n} + \text{holomorphic},
$$
for some families of functions $\phi_j(s) \in \rho^s \cinf(\barX)$ such that 
$\{\phi_j(\tfrac{n}2)\}$ are independent.  When we substitute this expression into $I_\vep^1(s)$, 
the holomorphic part does not contribute to the finite part as $\vep \to 0$, but from 
the singular part we obtain
$$
\FPe  I_\vep^1(\nh + i\xi) 
= \tr \Biggl( \sum_{j=1}^{m_{n/2}} \phi_j^\sharp(\nh)\otimes \phi_j^\sharp(\nh)\Biggr)\> 
\lim_{\vep\to 0} \frac{\vep^{-2i\xi} - \vep^{2i\xi}}{2i\xi},
$$
where $\phi_j^\sharp(\nh) := (\rho^{-n/2} \phi_j(\nh))|_{\bX}$ and the limit is interpreted 
as a distribution on $\bbR$.
An elementary distributional calculation gives
$$
\lim_{\vep\to 0} \frac{\vep^{-2i\xi} - \vep^{2i\xi}}{2i\xi}
= \pi \delta(\xi).
$$

Analyzing the scattering
matrix near $s = \nh$ as in \cite[Lemma~4.16]{PP:2001} shows that
$$
S_g(\nh) = -I + 2P,
$$
where
$$
P := \frac12 \sum_{j=1}^{m_{n/2}} \phi_j^\sharp(\nh)\otimes \phi_j^\sharp(\nh).
$$
Since $S_g(\nh)$ is self-adjoint and $S_g(\nh)^2 = I$, we deduce that
$P$ is an orthogonal projection.  
Furthermore, $P$ has maximal rank $m_{n/2}$, because otherwise some combination of the
$\phi_j(\nh)$'s would give an $L^2$ eigenfunction.  Thus we have
$$
\tr \Biggl( \sum_{j=1}^{m_{n/2}} \phi_j^\sharp(\nh)\otimes \phi_j^\sharp(\nh)\Biggr)
= 2 \tr P = 2m_{n/2},
$$
and hence
$$
\FPe  I_\vep^1(\nh + i\xi)  =  2\pi m_{n/2}\> \delta(\xi).
$$
\end{proof}

\bigbreak
Patterson-Perry \cite[Thm.~1.9]{PP:2001} also proved the functional equation
$$
\frac{Z_{g_0}(s)}{Z_{g_0}(n-s)} = e^{p_1(s)} \left(\frac{Z_0(s)}{Z_0(n-s)}\right)^{-\chi(X_0)}
\frac{P_{g_0}(s)}{P_{g_0}(n-s)},
$$
where $p_1(s)$ is a polynomial of degree at most $n+1$ and 
$$
Z_0(s) = s \prod_{k=1}^\infty E(-\tfrac{s}{k}, n+1)^{h_n(k)},
$$
with $h_n(k)$ as defined in (\ref{nu0.def}).  Combining these formulas with (\ref{ups.zeta}),
Lemma~\ref{dsr.ups.lemma} and Proposition~\ref{srdet.prop} yields
\begin{equation}\label{ups.pp}
\Upsilon_g(s) =  \del_s \log \left[e^{p(s)} \frac{P_g(s)}{P_g(n-s)} 
\left(\frac{Z_0(s)}{Z_0(n-s)}\right)^{-\chi(X_0)} \right]
+ \kappa_0(s),
\end{equation}
where $P_g(s)$ is the Hadamard product over $\Rsc_g$ and $p(s)$ is a polynomial.  
This formula is the essential ingredient in the Poisson formula.

To obtain a formula for $\Theta_{\rm c}(t)$, we need take the Fourier
transform of $\Upsilon_g(\tfrac{n}2+i\xi)$.  This is justified provided the
latter function defines a tempered distribution for $\xi \in \bbR$.  The fact that 
$\del_\xi \log (P(\nh+i\xi)/P(\nh-i\xi))$ is tempered can be proven by
an easy adaptation of the proof given for the case $n=1$ by 
Guillop\'e-Zworski \cite[Lemma 4.7]{GZ:1997}.  
Because the other terms in (\ref{ups.pp}) clearly
satisfy polynomial bounds when restricted to $\re s = \nh$, this shows that
$\Upsilon(\nh+i\xi)$ is tempered.

\bigbreak
\begin{proof}[Proof of Theorem \ref{poisson.thm} (Poisson formula)]
Since $\Upsilon(\nh+i\xi)$ defines a tempered distribution, 
Lemma \ref{upsing.lemma} and (\ref{funct.calc}) give us the formula
\begin{equation}\label{theta.fups}
\Theta_{\rm c}(t) = \frac{1}{4\pi} \calF\bigl[\Upsilon_g(\nh+i\xi)\bigr](t) + \frac12 m_{n/2}.
\end{equation}
By (\ref{ups.pp}) we can write 
\begin{equation}\label{ups.expand}
\Upsilon_g(\nh+i\xi) = p'(\nh+i\xi) + \upsilon_1(\xi) - \chi(X_0) \upsilon_2(\xi) + \kappa_0(\nh+i\xi),
\end{equation}
where
$$
\upsilon_1(\xi) := \sum_{\zeta\in\Rsc_g}  \left(\frac{n-2\zeta}{\xi^2 + (\zeta-n/2)^2}
+ p_n(\zeta; \xi) \right)
$$
and
$$
\upsilon_2(\xi) :=  \sum_{k=0}^\infty  h_n(k)\left(\frac{n+2k}{\xi^2 + (k+n/2)^2}
+ p_n(-k; \xi) \right),
$$
with $p_n(\zeta; \xi)$ is a polynomial of degree $n$ in $\xi$.

If $\upsilon_1(\xi)$ is differentiated $n+1$ times, the polynomial terms 
drop out, so that
$$
\del_\xi^{n+1} \upsilon_1(\xi) = \sum_{\zeta\in \Rsc_g} \del_\xi^{n+1} \left(\frac{n-2\zeta}{\xi^2 + (\zeta-n/2)^2}\right).
$$
By a simple contour integration, 
$$
\int_{-\infty}^\infty e^{-i\xi t} \frac{n-2\zeta}{\xi^2 + (\zeta-n/2)^2}\>d\xi = 
\begin{cases}
- 2\pi e^{(n/2-\zeta)|t|} & \re \zeta > \tfrac{n}2, \\
2\pi e^{(\zeta-n/2)|t|} & \re \zeta < \tfrac{n}2.
\end{cases}
$$
We can thus compute
$$
t^{n+1} \widehat{\upsilon}_1(t)  = 2\pi t^{n+1} \Biggl( 
\sum_{\re\zeta<n/2} e^{(\zeta-n/2)|t|} - \sum_{\re\zeta>n/2}e^{(n/2-\zeta)|t|}  \Biggr).
$$
By the same argument
\[
\begin{split}
t^{n+1} \widehat{\upsilon}_2(t) & = 2\pi t^{n+1} \sum_{k=0}^\infty h_n(k) e^{-(k+n/2)|t|} \\
& = 2\pi t^{n+1} \frac{2 \cosh t/2}{(2\sinh |t|/2)^{n+1}}.
\end{split}
\]

Since $p'(\nh+i\xi)$ is polynomial, its Fourier transform is a distribution
supported at $0$.  The same is true of $\kappa_0(\nh+i\xi)$ is $n$ is even.

If $n$ is odd, a simple calculation with the gamma function shows that the residue of the
simple poles of $\kappa_0(s)$ at $-k$ and $n+k$ is
$$
\frac{n!!}{(-2\pi)^{\frac{n+1}2}} h_n(k) \ovol(X_0, g_0),
$$
for $k\in\bbN_0$.  By Epstein's formula from \cite[Thm.~A.1]{PP:2001},
$$
\ovol(X_0, g_0) = \frac{(-2\pi)^{\frac{n+1}2}}{n!!} \chi(X_0).
$$
Thus the residue of the poles of $\kappa_0(s)$ at $-k$ and $n+k$ is given
by $\chi(X_0) h_n(k)$.  Since $\kappa_0(s)$ has polynomial growth of degree $n$,
away from the poles, it follows that $\kappa_0(\nh+i\xi)$ is equal to 
$\chi(X_0) \upsilon_2(\xi)$ plus a polynomial of degree $n$.  Hence
$$
t^{n+1} \calF\bigl[\kappa_0(\nh+i\xi)\bigr](t) = t^{n+1}\chi(X_0) \widehat{\upsilon}_2(t)
$$
for $n$ odd, and these terms cancel each other out of the Poisson formula.
(In \cite[eq.~(2.11)]{GN:2006} this same fact was deduced from the wave $0$-trace
on $\bbH^{n+1}$.)

Returning now to (\ref{theta.fups}), we have computed that
$$
t^{m} \Theta_{\rm c}(t) = \frac{t^{m}}2 \Biggl( 
\sum_{\re\zeta\le n/2} e^{(\zeta-n/2)t} - \sum_{\re\zeta>n/2}e^{(n/2-\zeta)t}  
- A(X) \frac{2 \cosh t/2}{(2\sinh |t|/2)^{n+1}} \Biggr),
$$
for $m = \max(\deg p(s), n+1)$, 
where $A(X) = 0$ for $n$ odd and $\chi(X_0)$ for $n$ even.  In the latter case we note that
$\chi(X_0) = \tfrac12(\bX) = \chi(X)$ because the dimension of $X$ is odd.
Combining this result with (\ref{t.disc}) completes the proof.
\end{proof}

\section{Lower bounds on resonances}\label{lbr.sec}

Deducing spectral asymptotics from the small-$t$ behavior of the
wave trace is a well-established technique spectral theory.  For asymptotically hyperbolic
manifolds, Joshi-S\'a Barreto \cite{JS:2001} studied the wave group and argued that
its kernel has the same local asymptotics as in the compact case\footnote{However, 
there are several typos in \cite[Prop.~4.2 and Prop.~4.3]{JS:2001}.}, as worked out 
by H\"ormander \cite[\S3]{Hormander:1968} and Duistermaat-Guillemin 
\cite[Prop.~2.1]{DG:1975}.  In particular, we have the following:
\begin{proposition}\label{wt.sing}
Assume that $(X,g)$ is asymptotically hyperbolic (conformally compact with $|d\rho|_{\bar g}=1$
on $\bX$).
If $\psi \in \cinf_0(\bbR)$ has support 
in a sufficiently small neighborhood of $0$ and $\psi = 1$ in some smaller
neighborhood of $0$, then 
\begin{equation}\label{wave.asymp}
\int_{-\infty}^\infty e^{-it\xi} \psi(t) \Theta(t)\>dt \sim \sum_{k=0}^\infty 
a_k |\xi|^{n-2k},
\end{equation}
where 
$$
a_0 =   \frac{2^{-n} \pi^{-\frac{n-1}2}}{\Gamma(\frac{n+1}{2})} \ovol(X,g).
$$ 
\end{proposition}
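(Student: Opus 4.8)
The plan is to reduce the expansion (\ref{wave.asymp}) to the classical interior expansion of the wave trace, using finite propagation speed to localize near the diagonal and the wave parametrix of Joshi--S\'a Barreto to control everything uniformly up to $\bX$, and then to pass to the $0$-trace by integrating the interior expansion against $\FPe\int_{\rho\ge\vep}dg$. Write $u(t;z,z'):=\cos(t\sqrt{\Delta_g-n^2/4})(z,z')$, so $\Theta(t)=\otr u(t;\cdot,\cdot)$. Since $\sqrt{\Delta_g-n^2/4}$ is a first-order classical pseudodifferential operator with the same principal symbol $|\xi|_g$ as $\sqrt{\Delta_g}$, the operator $\cos(t\sqrt{\Delta_g-n^2/4})$ has the geodesic flow of $g$ for its bicharacteristic flow and obeys finite propagation speed. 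Hence if $\supp\psi$ is a small enough neighborhood of $0$, the Schwartz kernel of $\psi(t)u(t;z,z')$ is supported in $\{d_g(z,z')<\vep_0\}$; moreover, since $\psi\equiv 1$ near $0$, replacing $\psi$ by $1$ changes the left side of (\ref{wave.asymp}) only by the Fourier transform of a compactly supported smooth function, which is Schwartz. The discrete-spectrum part of $\Theta$ contributes, on $\supp\psi$, only a finite linear combination of compactly supported smooth functions of $t$, and is therefore also negligible for the power expansion; the same is true of any term supported at $\xi=0$ (e.g.\ the $0$-trace anomaly of Lemma~\ref{upsing.lemma}).

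The essential input is \cite[Thm.~4.2]{JS:2001}, with the corrections to \cite[Props.~4.2--4.3]{JS:2001} noted there: for $|t|$ small, $u(t;z,z')$ admits, \emph{uniformly up to the conformal boundary}, a representation as a Lagrangian distribution carried by the geodesic flow whose local structure is identical to the one built by H\"ormander \cite[\S3]{Hormander:1968} and Duistermaat--Guillemin \cite[Prop.~2.1]{DG:1975} in the compact case, the shift $-n^2/4$ affecting only subprincipal data. Two consequences: $u(t;z,z)$ is polyhomogeneous in $\rho$ as $\rho\to 0$, so $\Theta(t)$ is a well-defined distribution; and, for each fixed interior $z$, stationary phase gives the classical expansion
\[
\int_{-\infty}^\infty e^{-it\xi}\,\psi(t)\,u(t;z,z)\,dt \;\sim\; \sum_{k\ge 0} e_k(z)\,|\xi|^{n-2k}, \qquad |\xi|\to\infty,
\]
with remainder $O_M(\brak{\xi}^{-M})$ \emph{uniformly in $z\in X$}. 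Here the $e_k(z)$ are universal polynomials in the curvature of $g$ and its covariant derivatives --- in particular smooth up to $\barX$ --- and $e_0(z)$ is the constant local Weyl density in dimension $\dim X=n+1$, namely $e_0=2^{-n}\pi^{-(n-1)/2}/\Gamma((n+1)/2)$; the exponents $n-2k$, rather than $(n+1)-2k$, appear because $\cos(t\sqrt{\,\cdot\,})$ differentiates the spectral function once.

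To finish, I would integrate this in $z$ with the Hadamard regularization. Because the remainder above is uniform in $z$ and each $e_k$ is smooth up to $\barX$ (so that both $u(t;z,z)$ and $e_k(z)$ have polyhomogeneous $\rho$-expansions), one may commute $\FPe\int_{\rho\ge\vep}dg$ with both the $t$-Fourier transform and the asymptotic expansion in $|\xi|$, obtaining
\[
\int_{-\infty}^\infty e^{-it\xi}\,\psi(t)\,\Theta(t)\,dt \;=\; \FPe\int_{\rho\ge\vep}\Bigl(\int_{-\infty}^\infty e^{-it\xi}\,\psi(t)\,u(t;z,z)\,dt\Bigr)dg(z) \;\sim\; \sum_{k\ge 0} a_k\,|\xi|^{n-2k},
\]
where $a_k:=\FPe\int_{\rho\ge\vep}e_k(z)\,dg(z)$. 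Since $e_0$ is constant, $a_0=e_0\,\FPe\,\vol_g\{\rho\ge\vep\}=e_0\,\ovol(X,g)$, which is the asserted value.

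The step I expect to be the main obstacle is the uniformity of the wave parametrix up to conformal infinity: one needs not merely the interior H\"ormander--Duistermaat--Guillemin construction, which is classical, but the fact that it and the bound on its remainder degenerate in a controlled, polyhomogeneous way as $\rho\to 0$. This is what makes $\otr$ of the wave kernel meaningful in the first place and what licenses the interchange of $\FPe$ with the limit $|\xi|\to\infty$; it is precisely the content of \cite[Thm.~4.2]{JS:2001}. Granting it, the remainder of the argument is bookkeeping of the classical local expansion together with the elementary fact that the finite part of $\int_{\rho\ge\vep}$ of a polyhomogeneous diagonal density equals the corresponding $0$-integral.
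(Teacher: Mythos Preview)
Your proposal is correct and follows essentially the same approach the paper takes: the paper does not give a formal proof of this proposition but simply attributes it to Joshi--S\'a Barreto \cite{JS:2001}, noting that their wave-group analysis shows the kernel has the same local small-$t$ asymptotics as in the compact case of H\"ormander \cite{Hormander:1968} and Duistermaat--Guillemin \cite{DG:1975}, whence the expansion follows by $0$-integrating the diagonal. Your write-up fleshes out exactly this route (finite propagation speed, the uniform-to-the-boundary parametrix from \cite{JS:2001}, the classical stationary-phase expansion, and passage to the $0$-trace), and you correctly identify the uniformity of the parametrix up to $\bX$ as the substantive analytic input; one minor remark is that the ``corrections to \cite[Props.~4.2--4.3]{JS:2001}'' you allude to are flagged in the present paper's footnote, not in \cite{JS:2001} itself.
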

For exactly hyperbolic metrics, we could deduce the local form of the
$t=0$ singularity of the wave group from the model wave operator on $\bbH^{n+1}$, 
which was given explicitly in Lax-Phillips \cite[\S5]{LP:1982}.   Thus, in the case of compactly supported
perturbations of hyperbolic metrics, one could give an alternative proof of Proposition~\ref{wt.sing} 
by using a partition of unity and finite propagation 
speed to reduce to a combination of results from the conformally compact hyperbolic and compact cases.

The arguments from Guillop\'e-Zworski \cite[\S6]{GZ:1997}, which were
adapted from Sj\"ostrand-Zworski \cite{SZ:1994}, may now be applied to give the lower bound
on scattering resonances.  Since there are slight variations between even and odd dimensions, 
we include the details.

\begin{lemma}\label{hp.zeta}
For $\phi\in \cinf_0(\bbR_+)$, and $\lambda>0$ sufficiently large
\begin{equation}\label{hp.zeta.B}
\sum_{\zeta\in\Rsc_g} \widehat\phi(i(\zeta - \nh)/\lambda) \ge c B(X,g) \lambda^{n+1},
\end{equation}
where $B(X,g) = |\ovol(X,g)|$ if $n$ is odd and $|\chi(X)|$ if $n$ is even.
\end{lemma}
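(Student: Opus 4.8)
The plan is to combine the Poisson formula (Theorem~\ref{poisson.thm}) with the small-time wave-trace asymptotics of Proposition~\ref{wt.sing}, following the Sj\"ostrand--Zworski--Guillop\'e--Zworski strategy. Fix $\phi \in \cinf_0(\bbR_+)$, supported close enough to a suitable time $t_0 > 0$ that we may apply Proposition~\ref{wt.sing} after rescaling. Taking the Fourier transform of the Poisson formula and pairing against $\phi(t/\lambda)$ (suitably normalized) turns the left-hand side into $\otr \cos(t\sqrt{\Delta_g - n^2/4})$ tested against a function concentrating near $t=0$ as $\lambda \to \infty$, and the right-hand side into the spectral sum $\sum_{\zeta \in \Rsc_g} \widehat\phi(i(\zeta-\nh)/\lambda)$ plus the explicit geometric term $-A(X)\cosh(t/2)/(2\sinh|t|/2)^{n+1}$ tested against the same function.

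First I would make the substitution precise: choose $\psi \in \cinf_0(\bbR)$ as in Proposition~\ref{wt.sing}, write $\Theta = \Theta\psi + \Theta(1-\psi)$, and arrange $\phi$ so that $\phi(t/\lambda)$ is supported where $\psi = 1$ for $\lambda$ large; then the pairing of $\Theta$ with $\phi(\cdot/\lambda)$ picks up only the local singularity. Plugging the asymptotic expansion \eqref{wave.asymp} into this pairing and tracking powers of $\lambda$, the leading contribution is $a_0 \lambda^{n+1}$ times a constant depending on $\phi$, i.e.\ proportional to $\ovol(X,g)\,\lambda^{n+1}$, with lower-order terms $O(\lambda^{n-1})$ (and a constant term from the $\xi^0$ piece of the expansion when $n$ is even, which is where the Euler characteristic enters). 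Here one must be slightly careful about the distinction between the full wave $0$-trace $\Theta$ and the continuous part; the discrete part $\Theta_{\rm d}$ contributes only exponentially localized pieces that are harmless near $t = 0$.

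Next, on the Poisson-formula side, the geometric term $A(X)\cosh(t/2)/(2\sinh|t|/2)^{n+1}$ has a singularity at $t = 0$ of the same order, so pairing it against $\phi(\cdot/\lambda)$ also produces a term of size $\lambda^{n+1}$; when $n$ is even this term is $\chi(X)$ times an explicit positive constant, and when $n$ is odd it is absent. Matching the two computations of the left-hand side shows that $\sum_{\zeta \in \Rsc_g} \widehat\phi(i(\zeta-\nh)/\lambda)$ equals $c_1 \ovol(X,g)\lambda^{n+1} + c_2 A(X)\lambda^{n+1} + o(\lambda^{n+1})$ for explicit constants $c_1, c_2$. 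For $n$ odd this is $c_1 \ovol(X,g)\lambda^{n+1} + o(\lambda^{n+1})$; for $n$ even, since $\ovol$ no longer controls the leading term but the geometric term does, one gets $c_2 \chi(X)\lambda^{n+1} + o(\lambda^{n+1})$. In either case, by choosing $\phi$ nonnegative with $\widehat\phi \ge 0$ on the relevant contour (or choosing its sign so that the leading coefficient has a definite sign matching $\mathrm{sgn}\,\ovol(X,g)$, resp.\ $\mathrm{sgn}\,\chi(X)$) we obtain the claimed lower bound $c\,B(X,g)\lambda^{n+1}$ for $\lambda$ large.

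The main obstacle is handling the convergence and tempered-distribution issues in the Fourier-transform manipulation: the sum over $\Rsc_g$ does not converge absolutely, so one cannot simply exchange sum and integral, and the geometric term $(2\sinh|t|/2)^{-(n+1)}$ is only locally integrable after the finite-part regularization. This is precisely why the Poisson formula was stated distributionally on $\bbR - \{0\}$ and, in the non-topological case, extended through $t = 0$ after multiplying by $t^{n+1}$; the correct device is to work with $t^{n+1}$ times each side, or equivalently to apply the regularized identity to test functions vanishing to high order at $0$, and then argue that the leading $\lambda^{n+1}$ asymptotics are unaffected by the corrections this introduces. A secondary technical point is ensuring the $o(\lambda^{n+1})$ error from the subleading heat-expansion coefficients and from $\Theta_{\rm d}$ is genuinely of lower order; this follows from Proposition~\ref{wt.sing} together with the bound $N(r) = O(r^{n+1})$ already available, but it needs to be stated carefully since we are in the process of proving a lower bound of the same order.
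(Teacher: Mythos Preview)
Your overall strategy---pair the Poisson formula against a rescaled test function $\phi_\lambda(t)=\lambda\phi(\lambda t)$ with $\phi\in\cinf_0(\bbR_+)$, and compare with the wave-trace asymptotics from Proposition~\ref{wt.sing}---is exactly the paper's approach. Two points, however, need correction.

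First, the parity dichotomy is not handled correctly. You assert that the wave-trace side contributes $a_0\lambda^{n+1}$ in all cases and that for $n$ even ``$\ovol$ no longer controls the leading term'' without saying why; you also attribute the appearance of $\chi(X)$ to the $\xi^0$ term of the expansion \eqref{wave.asymp}, which is wrong. The actual mechanism is this: the leading wave-trace term is $\lambda^{n+1}\int_\bbR \widehat\phi(\xi)\,|\xi|^n\,d\xi$ (up to constants). When $n$ is even, $|\xi|^n=\xi^n$ is a polynomial, so this integral equals a constant times $\phi^{(n)}(0)=0$ because $\phi$ is supported away from $0$. Hence for $n$ even the wave-trace pairing is $O(\lambda^{n-1})$, not $c\,\ovol(X,g)\lambda^{n+1}$, and the entire $\lambda^{n+1}$ contribution to $\sum_\zeta\widehat\phi(i(\zeta-\nh)/\lambda)$ comes from the explicit $\chi(X)\cosh(t/2)/(2\sinh|t|/2)^{n+1}$ term in the Poisson formula. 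Without this observation your displayed combination $c_1\,\ovol(X,g)\lambda^{n+1}+c_2\,A(X)\lambda^{n+1}$ is not shown to reduce to a single nonvanishing term, and the lower bound is unjustified. (Your parenthetical suggestion to choose $\phi$ with $\widehat\phi\ge 0$ is also not viable: a nonzero real $\phi$ supported in $\bbR_+$ cannot have nonnegative Fourier transform on $\bbR$.)

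Second, the ``main obstacle'' you describe is not an obstacle here. Since $\phi\in\cinf_0(\bbR_+)$, the rescaled function $\phi_\lambda$ is supported away from $t=0$ for every $\lambda$, so Theorem~\ref{poisson.thm} applies directly as a distributional identity on $\bbR\setminus\{0\}$; no multiplication by $t^{n+1}$ or finite-part regularization is needed, and the geometric term $\cosh(t/2)/(2\sinh|t|/2)^{n+1}$ is smooth on $\supp\phi_\lambda$. The convergence of the resonance sum is already built into the distributional statement of the Poisson formula.
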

\begin{proof}
For $\phi\in \cinf_0(\bbR_+)$ with $\phi(t) \ge 0$ and $\phi(1)>0$ we define the rescaled function
$$
\phi_\lambda(t) := \lambda \phi(\lambda t).
$$
For $\psi$ as in (\ref{wave.asymp}) and $\lambda$ sufficiently large, we have
\begin{equation}\label{pt.ppt}
\int_{-\infty}^\infty \phi_\lambda(t) \Theta(t)\>dt =  \int_{-\infty}^\infty
\phi_\lambda(t) \psi(t) \Theta(t)\>dt.
\end{equation}
By (\ref{wave.asymp}),
$$
\widehat{\psi \Theta}(\xi) = a_0 |\xi|^{n} + r(\xi),
$$
where the remainder $r(\xi)$ is smooth  away from $\xi=0$ and 
$O(|\xi|^{n-2})$ as $\xi \to \pm \infty$.
Using the Fourier transform on the right side of (\ref{pt.ppt}) then gives
$$
\int_{-\infty}^\infty \phi_\lambda(t) \Theta(t)\>dt =  (2\pi)^{-1}  \int_{-\infty}^\infty 
\widehat{\phi_\lambda}(\xi)\> \bigl(a_0 |\xi|^n + r(\xi)\bigr)\>d\xi.
$$
Since $\widehat{\phi_\lambda}(\xi) = \widehat\phi(\xi/\lambda)$, we have
$$
\int_{-\infty}^\infty \widehat{\phi_\lambda}(\xi)\>|\xi|^n \>d\xi
= \lambda^{n+1} \int_{-\infty}^\infty \widehat{\phi}(\xi)\> |\xi|^n \>d\xi.
$$

If $n$ is odd, then since $\phi$ is compactly supported in $\bbR_+$ we can compute
\begin{equation}\label{hpl.xn}
\int_{-\infty}^\infty \widehat{\phi}(\xi)\> |\xi|^n \>d\xi = 2n! (-1)^{\frac{n+1}{2}} \int_0^\infty \phi(t) \>t^{-n-1}\>dt,
\end{equation}
and the remainder term is easily controlled by
$$
\int_{-\infty}^\infty \widehat{\phi_\lambda}(\xi) r(\xi) \>d\xi
\le C \lambda^{n-1} \int_{-\infty}^\infty |\widehat{\phi}(\xi)| \>d\xi.
$$
Thus, for $\lambda$ sufficiently large,
\begin{equation}\label{phil.lw}
\left|\int_{-\infty}^\infty \phi_\lambda(t) \Theta(t)\>dt \right| \ge c \>|a_0|\> \lambda^{n+1}.
\end{equation}
Applying the even-dimensional case of Theorem~\ref{poisson.thm} gives
\begin{equation}\label{pp.hp}
\int_{-\infty}^\infty \phi_\lambda(t) \Theta(t)\>dt = \sum_{\zeta\in\Rsc_g}
\widehat\phi(i(\zeta - \nh)/\lambda),
\end{equation}
and this completes the proof.

If $n$ is even, the integral in (\ref{hpl.xn}) gives
$$
\int_{-\infty}^\infty \widehat{\phi}(\xi)\> \xi^n \>d\xi  = C \phi^{(n)}(0) = 0.
$$
Hence 
$$
\int_{-\infty}^\infty \phi_\lambda(t) \Theta(t)\>dt = O(\lambda^{n-1}).
$$
In this case Theorem~\ref{poisson.thm} gives
$$
\sum_{\zeta\in\Rsc_g} \widehat\phi(i(\zeta - \nh)/\lambda) = 
\chi(X) \int_0^\infty  \frac{\cosh t/2\lambda}{(2\sinh t/2\lambda)^{n+1}}
\phi(t)\>dt + O(\lambda^{n-1}),
$$
where the integral is well-defined because $\phi$ is supported away from $0$.
The result for $n$ even now follows from the easy estimate
$$
\frac{\cosh t/2}{(2\sinh t/2)^{n+1}} \ge c t^{-(n+1)},
$$
for $t$ near $0$.
\end{proof}

\begin{proof}[Proof of Theorem~\ref{lowerbound.thm}]
Take $\phi$ as in Lemma~\ref{hp.zeta}.
Since $\phi$ is compactly supported in $\bbR_+$, we have analytic estimates on its
Fourier transform,
$$
|\widehat\phi(\xi)| \le C_m (1+ |\xi|)^{-m},
$$
for any $m\in\bbN$.  Combining this with (\ref{hp.zeta.B}) gives
$$
c B(X,g)  \lambda^{n+1} \le C \sum_{\zeta\in\Rsc_g} (1+ |\zeta|/\lambda)^{-m}.
$$
Writing the right-hand side as a Stieljes integral then gives
\[
\begin{split}
c B(X,g)   \lambda^{n+1} & \le C \int_0^\infty (1+ r/\lambda)^{-m} \>d\Nsc(r) \\
& = C  \int_0^\infty (1+ r)^{-m-1} \Nsc(\lambda r)\>dr.
\end{split}
\]
We then split the integral and use the upper bound (\ref{Nsc.bound}) to obtain
\[
\begin{split}
c B(X,g)   \lambda^{n+1} & \le C \int_0^b (1+ r)^{-m-1} \Nsc(\lambda r)\>dr  +
C \lambda^{n+1} \int_b^\infty r^{n+1}(1+ r)^{-m-1} \>dr \\
& \le C \Nsc(\lambda b) + C \lambda^{n+1} b^{n-m+1}.
\end{split}
\]
Setting $m=n+2$, we complete the proof by taking $b$ sufficiently small.
\end{proof}

\section{Scattering phase asymptotics}\label{scphase.sec}

In view of the definition of the relative scattering phase used by Guillop\'e-Zworski 
\cite{GZ:1997}, it makes sense to define the absolute scattering
phase associated to an asymptotically hyperbolic metric $(X, g)$ by
$$
\sigma_g(\xi) := \frac{1}{2\pi} \int_0^\xi \Upsilon(\nh+it)\>dt.
$$
This is also equal to the generalized Krein function introduced by Guillarmou \cite{Gui:2005b} in
the even-dimensional case.  It is not so clear that this is a useful definition in odd dimensions, 
however, because of the dependence on the defining function $\rho$.
In the perturbative case we could avoid this
issue by using the relative scattering phase $\sigma_g(\xi) - \sigma_{g_0}(\xi)$,
but it would be more satisfying to find an intrinsic regularization of the scattering phase in odd dimensions.

\begin{theorem}\label{scphase.thm}
For $(X,g)$ a compactly supported perturbation of a conformally compact hyperbolic manifold,
$$
\sigma_g(\xi) = \frac{(4\pi)^{-(n+1)/2}}{\Gamma(\frac{n+3}{2})} \ovol(X,g) \>\xi^{n+1} + O(\xi^n).
$$
as $\xi \to +\infty$.
\end{theorem}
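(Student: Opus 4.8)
The plan is to read off the asymptotics of $\sigma_g$ directly from the $t=0$ singularity of the wave $0$-trace, following the Fourier-analytic method that Guillop\'e-Zworski used for surfaces. The starting point is $\sigma_g'(\xi)=\tfrac1{2\pi}\Upsilon_g(\nh+i\xi)$; since $\Upsilon_g(n-s)=\Upsilon_g(s)$, the function $\xi\mapsto\Upsilon_g(\nh+i\xi)$ is even, so $\sigma_g$ is odd and it suffices to treat $\xi\to+\infty$. Combining (\ref{funct.calc}) with Lemma~\ref{upsing.lemma} and Fourier inversion, one checks that $\Upsilon_g(\nh+i\xi)=2\,\widehat{\Theta_c}(\xi)$ for $\xi\neq0$ (with $\widehat{f}(\xi)=\int_{-\infty}^{\infty}e^{-it\xi}f(t)\,dt$), where $\Theta_c=\Theta-\Theta_{\mathrm d}$ is the continuous part of the wave $0$-trace; the discrete part $\Theta_{\mathrm d}$ is a finite smooth sum of $\cosh$'s, so its Fourier transform is rapidly decreasing and contributes nothing to the asymptotics. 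Thus everything reduces to the behavior of $\widehat{\Theta_c}(\xi)$ as $\xi\to+\infty$.

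Next I would localize in $t$. Choose $\psi$ as in Proposition~\ref{wt.sing} with $\supp\psi$ small enough to avoid every nonzero period of a closed geodesic of $(X,g)$ (these form a discrete set bounded away from $0$, by Joshi-S\'a Barreto \cite{JS:2001}), so that $(1-\psi)\Theta_c$ is smooth near $0$. Then $\widehat{\Theta_c}(\xi)=\widehat{\psi\Theta}(\xi)+\widehat{(1-\psi)\Theta_c}(\xi)+O(\xi^{-\infty})$, and Proposition~\ref{wt.sing} supplies the expansion $\widehat{\psi\Theta}(\xi)=a_0|\xi|^{n}+a_1|\xi|^{n-2}+\cdots$ with $a_0=\tfrac{2^{-n}\pi^{-(n-1)/2}}{\Gamma((n+1)/2)}\ovol(X,g)$. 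The term $(1-\psi)\Theta_c$ has singularities only at the geodesic periods, where the Duistermaat-Guillemin normal form for nondegenerate closed geodesics makes it contribute oscillatory terms of order well below $|\xi|^{n-1}$; together with the decay of $\Theta_c$ as $t\to\infty$ read off from the Poisson formula (the funnel term decays exponentially and the resonance sum decays since all but finitely many resonances have $\re\zeta<\nh$), this yields $\widehat{(1-\psi)\Theta_c}(\xi)=O(|\xi|^{n-1})$. Hence $\Upsilon_g(\nh+i\xi)=2a_0\,\xi^{n}+O(\xi^{n-1})$, and integrating from $0$ gives $\sigma_g(\xi)=\tfrac{a_0}{\pi(n+1)}\,\xi^{n+1}+O(\xi^{n})$; the identities $(n+1)\Gamma(\tfrac{n+1}2)=2\Gamma(\tfrac{n+3}2)$ and $2^{-(n+1)}\pi^{-(n+1)/2}=(4\pi)^{-(n+1)/2}$ turn the leading coefficient into the one in the statement.

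The main obstacle is the bound $\widehat{(1-\psi)\Theta_c}(\xi)=O(|\xi|^{n-1})$ on the non-local part of the wave trace: one must show that away from $t=0$, $\Theta_c$ is a tempered distribution whose singularities are conormal at the geodesic periods with the expected order, and control its growth as $t\to\infty$. Temperedness of $\Upsilon_g(\nh+i\xi)$ was established in \S\ref{poisson.sec}, and the remaining estimates are the analogue for hyperbolic-near-infinity manifolds of those in Guillop\'e-Zworski \cite[\S\S6--7]{GZ:1997}. As an alternative or cross-check, one may instead integrate (\ref{ups.pp}): this writes $2\pi\sigma_g(\xi)$ as $-i[p(\nh+i\xi)-p(\nh)]+2\arg P_g(\nh+i\xi)-2\chi(X_0)\arg Z_0(\nh+i\xi)+\int_0^{\xi}\kappa_0(\nh+it)\,dt$, the last integral being evaluated in closed form from gamma-function identities (a polynomial of degree $n+1$, up to exponentially small corrections when $n$ is odd). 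This route makes transparent how the polynomial, the Hadamard product $P_g$ over $\Rsc_g$, and the hyperbolic background recombine into $\ovol(X,g)$, but it still needs the wave-trace input of Proposition~\ref{wt.sing} to identify the leading coefficient — and, for topological perturbations, to control $\deg p$.
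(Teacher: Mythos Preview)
The gap is in your claimed bound $\widehat{(1-\psi)\Theta_c}(\xi)=O(|\xi|^{n-1})$, which would give the pointwise asymptotic $\Upsilon_g(\tfrac n2+i\xi)=2a_0\,\xi^{n}+O(\xi^{n-1})$. There is no reason to expect this. The Poisson formula of Theorem~\ref{poisson.thm} holds only as a distributional identity on $\bbR\setminus\{0\}$; it does not give absolute convergence of the sum over $\Rsc_g$, nor any pointwise decay of $\Theta_c(t)$ as $t\to\infty$. Resonances may approach the critical line, so the individual exponents $\re\zeta-\tfrac n2$ can be arbitrarily close to $0$, while there are $\asymp r^{n+1}$ of them with $|\zeta|\le r$. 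Correspondingly, from (\ref{ups.expand}) one sees that $\upsilon_1(\xi)$ contains terms $(n-2\zeta)/(\xi^2+(\zeta-\tfrac n2)^2)$ which can be large for $\xi$ near $\im\zeta$ when $\re\zeta$ is close to $\tfrac n2$; a uniform pointwise $O(\xi^{n-1})$ remainder for $\Upsilon_g(\tfrac n2+i\xi)$ is simply not available. The appeal to Duistermaat--Guillemin does not help either: each nondegenerate closed geodesic contributes a bounded oscillatory term to the Fourier transform, but there are infinitely many periods and you give no mechanism for summing them. The sections of \cite{GZ:1997} you cite do not prove such a pointwise bound; they use precisely the smoothing-plus-Tauberian argument you are trying to avoid.

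What the paper (following \cite{GZ:1997} and ultimately Melrose \cite{Melrose:1988}) does instead is to convolve first: for $\phi\in\mathcal S(\bbR)$ with $\hat\phi=1$ near $0$ and $\supp\hat\phi$ small, Proposition~\ref{wt.sing} and the temperedness already established give $(\sigma_g'*\phi)(\xi)=\tfrac{a_0}{\pi}|\xi|^n+O(\xi^{n-2})$, hence $(\sigma_g*\phi)(\xi)=\tfrac{a_0}{\pi(n+1)}|\xi|^{n+1}+O(\xi^{n-1})$. The substantive step is then to show $\sigma_g(\xi)-(\sigma_g*\phi)(\xi)=O(\xi^n)$, and this is where the structure of (\ref{ups.expand}) is used: the Hadamard product $P_g$ together with the counting bound $\Nsc(r)=O(r^{n+1})$ controls the oscillation of $\sigma_g$ on bounded intervals, which is exactly what Melrose's Tauberian argument needs. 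Your alternative route via (\ref{ups.pp}) runs into the same wall, since $\arg P_g(\tfrac n2+i\xi)$ is itself a resonance-counting object with no clean pointwise asymptotics; the Tauberian step is not optional.
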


For $(X,g)$ even-dimensional and conformally compact hyperbolic, Theorem~\ref{scphase.thm}
was proven by Guillarmou \cite{Gui:2005b}.
We will only sketch the details of the proof, since the argument from Guillop\'e-Zworski \cite{GZ:1997}
applies with only minor changes.  By (\ref{theta.fups}), we have
$$
\sigma_g'(\xi) = \frac{1}{2\pi} \Upsilon_X(\nh + i\xi) = \frac{1}{\pi} \widehat\Theta_{\rm c}(-\xi) - 
m_{n/2} \delta(\xi).
$$
Thus Proposition~\ref{wt.sing} implies that
$$
(\sigma_g'*\phi)(\xi) = \frac{a_0}{\pi} |\xi|^n + O(\xi^{n-2}).
$$
for $\phi \in \mathcal{S}(\bbR)$ such that $\phi >0$,
$\hat\phi=1$ near $0$, and $\hat\phi$ has support in a sufficiently small neighborhood of $0$.
This gives
$$
(\sigma_g*\phi)(\xi) = \frac{a_0}{\pi(n+1)} |\xi|^{n+1} + O(\xi^{n-1}),
$$
as $\xi \to +\infty$.
The proof then reduces to an application of Melrose's argument \cite{Melrose:1988}
to derive $\sigma_g(\xi) - \sigma_g*\phi(\xi) = O(\xi^n)$ from (\ref{ups.expand}).


\begin{thebibliography}{10}

\bibitem{Boas}
R.~P. Boas, \emph{Entire Functions}, Academic Press Inc., New York, 1954.

\bibitem{Borthwick}
D.~Borthwick, \emph{Scattering Theory for Infinite-Area Hyperbolic Surfaces},
  Birkh\"auser, Boston, 2007.

\bibitem{BJP:2003}
D.~Borthwick, C.~Judge, and P.~A. Perry, Determinants of {L}aplacians and
  isopolar metrics on surfaces of infinite area, \emph{Duke Math. J.}
  \textbf{118} (2003), 61--102.

\bibitem{BP:2002}
D.~Borthwick and P.~A. Perry, Scattering poles for asymptotically hyperbolic
  manifolds, \emph{Trans. Amer. Math. Soc.} \textbf{354} (2002), 1215--1231.

\bibitem{CV:2003}
C.~Cuevas and G.~Vodev, Sharp bounds on the number of resonances for
  conformally compact manifolds with constant negative curvature near infinity,
  \emph{Comm. PDE} \textbf{28} (2003), 1685--1704.

\bibitem{DG:1975}
J.~J. Duistermaat and V.~W. Guillemin, The spectrum of positive elliptic
  operators and periodic bicharacteristics, \emph{Invent. Math.} \textbf{29}
  (1975), 39--79.

\bibitem{FH:2000}
R.~Froese and P.~Hislop, On the distribution of resonances for some
  asymptotically hyperbolic manifolds, \emph{Journ\'ees ``\'Equations aux
  D\'eriv\'ees Partielles'' (La Chapelle sur Erdre, 2000)}, Univ. Nantes,
  Nantes, 2000, p.~Exp.\ No.\ VII.

\bibitem{GK:1969}
I.~C. Gohberg and M.~Krein, \emph{Introduction to the Theory of Linear
  Nonselfadjoint Operators}, Translations of Mathematical Monographs, vol.~18,
  American Mathematical Society, Providence, RI, 1969.

\bibitem{GS:1971}
I.~C. Gohberg and E.~I. Sigal, An operator generalization of the logarithmic
  residue theorem and the theorem of {R}ouch\'{e}, \emph{Math. U. S. S. R.
  Sbornik} \textbf{13} (1971), 603--625.

\bibitem{GL:1991}
C.~R. Graham and J.~M. Lee, Einstein metrics with prescribed conformal infinity
  on the ball, \emph{Adv. Math.} \textbf{87} (1991), 186--225.

\bibitem{GZ:2003}
R.~C. Graham and M.~Zworski, Scattering matrix in conformal geometry,
  \emph{Invent. Math.} \textbf{152} (2003), 89--118.

\bibitem{Gui:2005b}
C.~Guillarmou, Generalized {K}rein formula and determinants for
  {P}oincar\'e-{E}instein manifolds, preprint, 2005.

\bibitem{Gui:2005}
C.~Guillarmou, Resonances and scattering poles on asymptotically hyperbolic
  manifolds, \emph{Math. Res. Lett.} \textbf{12} (2005), 103--119.

\bibitem{GN:2006}
C.~Guillarmou and F.~Naud, Wave 0-trace and length spectrum on convex
  co-compact hyperbolic manifolds, \emph{Comm. Anal. Geom.} \textbf{14} (2006),
  945--967.

\bibitem{GZ:1995b}
L.~Guillop\'{e} and M.~Zworski, Polynomial bounds on the number of resonances
  for some complete spaces of constant negative curvature near infinity,
  \emph{Asymptotic Anal.} \textbf{11} (1995), 1--22.

\bibitem{GZ:1995a}
L.~Guillop\'{e} and M.~Zworski, Upper bounds on the number of resonances for
  non-compact {R}iemann surfaces, \emph{J. Funct. Anal.} \textbf{129} (1995),
  364--389.

\bibitem{GZ:1997}
L.~Guillop\'{e} and M.~Zworski, Scattering asymptotics for {R}iemann surfaces,
  \emph{Ann. Math.} \textbf{145} (1997), 597--660.

\bibitem{Hormander:1968}
L.~H{\"o}rmander, The spectral function of an elliptic operator, \emph{Acta
  Math.} \textbf{121} (1968), 193--218.

\bibitem{Hormander:I}
L.~H{\"o}rmander, \emph{The Analysis of Linear Partial Differential Operators.
  {I}}, Springer-Verlag, Berlin, 2003, Reprint of the second (1990) edition.

\bibitem{JS:2000}
M.~S. Joshi and A.~S\'{a} Barreto, Inverse scattering on asymptotically
  hyperbolic manifolds, \emph{Acta Math.} \textbf{184} (2000), 41--86.

\bibitem{JS:2001}
M.~S. Joshi and A.~S\'{a} Barreto, The wave group on asymptotically hyperbolic
  manifolds, \emph{J. Funct. Anal.} \textbf{184} (2001), 291--312.

\bibitem{KV:1994}
M.~Kontsevich and S.~Vishik, Determinants of elliptic pseudo-differential
  operators, Arxiv preprint hep-th/9404046, 1994.

\bibitem{LP:1982}
P.~Lax and R.~S. Phillips, The asymptotic distribution of lattice points in
  {E}uclidean and non-{E}uclidean spaces, \emph{J. Funct. Anal.} \textbf{46}
  (1982), 280--350.

\bibitem{Levin}
B.~Ja. Levin, \emph{Distribution of Zeros of Entire Functions}, revised ed.,
  Translations of Mathematical Monographs, vol.~5, American Mathematical
  Society, Providence, R.I., 1980.

\bibitem{Ma91a}
R.~Mazzeo, Unique continuation at infinity and embedded eigenvalues for
  asymptotically hyperbolic manifolds, \emph{Amer. J. Math.} \textbf{113}
  (1991), 25--45.

\bibitem{MM:1987}
R.~Mazzeo and R.~B. Melrose, Meromorphic extension of the resolvent on complete
  spaces with asymptotically constant negative curvature, \emph{J. Funct.
  Anal.} \textbf{75} (1987), 260--310.

\bibitem{Melrose:1988}
R.~B. Melrose, Weyl asymptotics for the phase in obstacle scattering,
  \emph{Comm. PDE} \textbf{13} (1988), 1431--1439.

\bibitem{Melrose:1993}
R.~B. Melrose, \emph{The {A}tiyah-{P}atodi-{S}inger index theorem}, Research
  Notes in Mathematics, vol.~4, A K Peters Ltd., Wellesley, MA, 1993.

\bibitem{PP:2001}
S.~J. Patterson and P.~A. Perry, The divisor of {S}elberg's zeta function for
  {K}leinian groups, \emph{Duke Math. J.} \textbf{106} (2001), 321--390,
  Appendix A by Charles Epstein.

\bibitem{Perry:2003}
P.~A. Perry, A {P}oisson summation formula and lower bounds for resonances in
  hyperbolic manifolds, \emph{Int. Math. Res. Not.} (2003), 1837--1851.

\bibitem{Perry:2007}
P.~A. Perry, The spectral geometry of geometrically finite hyperbolic
  manifolds, \emph{Spectral Theory and Mathematical Physics: a Festschrift in
  honor of Barry Simon's 60th birthday}, Proc. Sympos. Pure Math., vol.~76,
  Amer. Math. Soc., Providence, RI, 2007, pp.~289--327.

\bibitem{SZ:1994}
J.~Sj{\"o}strand and M.~Zworski, Lower bounds on the number of scattering
  poles. {II}, \emph{J. Funct. Anal.} \textbf{123} (1994), 336--367.

\bibitem{Yafaev}
D.~R. Yafaev, \emph{Mathematical Scattering Theory}, Translations of
  Mathematical Monographs, vol. 105, American Mathematical Society, Providence,
  RI, 1992.

\end{thebibliography}
\end{document}